\theoremstyle{plain}
   \newtheorem{theorem}{Theorem}[section]
   \newtheorem{proposition}[theorem]{Proposition}
   \newtheorem{prop}[theorem]{Proposition}
   \newtheorem{lemma}[theorem]{Lemma}
   \newtheorem{corollary}[theorem]{Corollary}
   \newtheorem{conjecture}[theorem]{Conjecture}
   \newtheorem*{theorem*}{Theorem}
   \newtheorem{defprop}[theorem]{Definition--Proposition}
\theoremstyle{definition}
   \newtheorem{definition}[theorem]{Definition}
   \newtheorem{example}[theorem]{Example}
   \newtheorem{question}[theorem]{Question}
   \newtheorem{remark}[theorem]{Remark}
\numberwithin{equation}{section}
\newcommand\qbin[3]{\left[\genfrac{}{}{0pt}{}{#1}{#2}\right]_{#3}}
\newcommand\Symm{\mathfrak{S}}
\newcommand\Sym{\operatorname{Sym}}
\newcommand\wt{\operatorname{wt}}
\newcommand{\std}{\operatorname{std}}
\newcommand{\Des}{\operatorname{Des}}
\newcommand\affS{\widetilde{\Symm}}
\newcommand{\ol}[1]{\overline{#1}}
\newcommand{\Green}{\mathcal{Q}}
\newcommand{\ambcs}{\overset{\textrm{AMBC}}{\longleftrightarrow}}
\newcommand{\AAA}{{\mathcal{A}}}
\newcommand{\DDD}{{\mathcal{D}}}
\newcommand\ZZ{{\mathbb{Z}}}
\newcommand\NN{{\mathbb{N}}}
\newcommand{\RRSS}{\operatorname{RRSS}}
\newcommand{\La}{\Lambda}
\newcommand{\la}{\lambda}
\newcommand{\tw}[1]{\widetilde{#1}}
\newcommand{\wh}[1]{\widehat{#1}}
\newcommand{\ov}[1]{\overline{#1}}
\newcommand{\rsks}{\overset{\textrm{RSK}}{\longleftrightarrow}}
\newcommand{\SSYT}{{\rm{SSYT}}}
\newcommand{\SYT}{{\rm{SYT}}}
\newcommand{\RC}{{\rm{RC}}}
\newcommand{\CLR}{{\rm{CLR}}}
\newcommand{\trLR}{{\rm{tr_{LR}}}}
\newcommand{\sort}{{\rm{sort}}}
\DeclareMathOperator{\pr}{pr}
\newcommand{\ds}{\displaystyle}
\begin{document}

\title{An affine generalization of evacuation}
\author{M.~Chmutov, G.~Frieden, D.~Kim, J.B.~Lewis, and E.~Yudovina}
\begin{abstract}
We establish the existence of an involution on tabloids that is analogous to Sch\"utzenberger's evacuation map on standard Young tableaux.  We find that the number of its fixed points is given by evaluating a certain Green's polynomial at $q = -1$, and satisfies a ``domino-like'' recurrence relation.  
\end{abstract}

\maketitle

\tableofcontents

\section{Introduction}

This paper concerns an analogue in the affine symmetric group of the operation called \emph{evacuation} (or \emph{Sch\"utzenberger's involution}) in the finite symmetric group $\Symm_n$, and the associated beautiful combinatorial and representation-theoretic story.  We begin by summarizing a few highlights of this story.  Evacuation is an involution on standard Young tableaux of a given shape.  Under the Robinson--Schensted bijection, it corresponds to a natural ``rotation'' involution on the symmetric group of permutations.  It commutes with Knuth relations and so gives a symmetry of the \emph{dual equivalence graphs}, whose vertices are tableaux and whose edges are Knuth moves. 
In terms of representation theory, this means that evacuation permutes Kazhdan--Lusztig cells of symmetric groups. As it preserves the shape of each standard Young tableau, it stabilizes each two-sided cell and permutes the left and right cells contained in such two-sided cells.

Evacuation can be computed by many combinatorial algorithms, none of which is completely straightforward.  However, for certain nice shapes (notably, rectangles), it can be described in very simple terms.  Evacuation also has interesting enumerative properties: its fixed points are counted by an instance of Stembridge's \emph{$q = -1$ phenomenon} and are in bijection with \emph{domino tableaux} of the same shape.  
In turn, the domino tableaux are closely related to Kazhdan--Lusztig cells of Weyl groups and Springer theory for types B, C, and D.
This story is recalled in more detail in Sections~\ref{sec:finite} and~\ref{sec: q=-1}. 

Our project is to construct a parallel story for the \emph{affine symmetric group} $\affS_n$.  This group is an infinite analogue of the symmetric group, and much of the beautiful combinatorics and representation theory of the symmetric group can be extended to the affine setting.  We give its formal definition in Section~\ref{sec:affine}. The analogue of the Robinson--Schensted correspondence for $\affS_n$ is the \emph{affine matrix ball construction} (AMBC) described by Chmutov--Pylyavskyy--Yudovina \cite{CPY}, based on the work of Shi \cite{Shi_GRS}, associating to each affine permutation two \emph{tabloids} and some additional data. Our first main theorem (Theorem~\ref{evacuation exists theorem}) establishes the existence of an affine analogue of evacuation, in the following sense: there is a natural ``rotation'' involution on $\affS_n$ which corresponds, via AMBC, to an involution on tabloids.  In Section~\ref{sec:defining evacuation}, we show that this map corresponds to the usual evacuation map when restricted to tabloids that happen to be tableaux; that is plays well with Knuth moves and dual equivalence graphs; and that it has a particularly simple form when computed on tabloids of rectangular shape.  We also give several algorithms by which it may be computed in general.

In Section~\ref{sec:self-evacuating}, we consider the fixed points of the affine evacuation map.  Our second main result (Theorem~\ref{main counting theorem}) establishes that the fixed points of affine evacuation are counted by an evaluation of a Green's polynomial at $q = -1$, and (using results of the third-named author) that they satisfy a recurrence with a domino flavor. One of the key steps in the proof is an evaluation of the Kostka--Foulkes polynomials at $q=-1$ (Theorem~\ref{thm: KF at -1}); this result is established using the theory of rigged configurations. We also give elementary combinatorial proofs of Theorem~\ref{main counting theorem} for certain simple shapes.

In Section~\ref{sec:open problems}, we give a number of open problems and additional remarks.

The main thrust of the present paper is combinatorial.  For more discussion of the representation-theoretic aspects of this work, see the companion paper \cite{KimInvolution} by the third-named author.

\subsection*{Acknowledgements}
The authors are grateful to Kevin Dilks, for initially suggesting the idea of generalizing evacuation in this context; to Sam Hopkins and Thomas McConville, who informed us of our parallel work on related questions; to Brendon Rhoades, for helpful conversations about Kostka--Foulkes polynomials; to Pavlo Pylyavskyy, for numerous conversations; and to Vic Reiner, for many fruitful questions and suggestions. MC was supported in part by NSF grant DMS-1503119; GF was supported in part by NSF grants DMS-1464693 and DMS-0943832; JBL was supported in part by NSF grant DMS-1401792.

\section{Background}
\subsection{Finite symmetric group}
\label{sec:finite}

We begin by describing the story that we hope to emulate in the affine setting.  For further background, we recommend \cite[Ch.~7, Appendix 1]{EC2}, \cite[Ch.~3]{Sagan}, \cite[\S 1.5, 2.3, A3 and Chs.~5--6]{BB}, and \cite{Manivel}.

\subsubsection{Finite permutations}
Among the many ways to represent permutations in the symmetric group $\Symm_n$, one may write them 
\begin{compactitem}
\item in \emph{one-line notation}, as words containing each element of $[n] := \{1, \ldots, n\}$ exactly once;
\item as $n \times n$ \emph{permutation matrices} (an example is shown in Figure \ref{fig:fin perm}); or 
\item as the elements of the \emph{Coxeter group} of type $A_{n - 1}$, having generators $\{s_1, \ldots, s_{n - 1}\}$ and relations $s_i^2 = 1$, $s_is_js_i = s_js_is_j$ if $i - j = \pm1$, and $s_is_j = s_js_i$ otherwise.
\end{compactitem} 
\begin{figure}
\[
\resizebox{.15\textwidth}{!}{\input{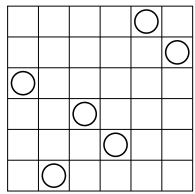}}
\] 
	\caption{
The permutation matrix of the permutation $561342\in\Symm_6$, where the balls stand for entries equal to $1$ while the rest of the entries are $0$.  In Coxeter generators, it can be expressed as $s_4s_5s_3s_4s_2s_3s_4s_5s_1s_2$.}
\label{fig:fin perm}
\end{figure}
As a permutation, $s_i$ is the \emph{simple transposition} that exchanges $i$ and $i + 1$.  It is convenient to represent the relations by the associated \emph{Dynkin diagram}:
\[
\resizebox{.3\textwidth}{!}{\begin{picture}(0,0)%
\includegraphics{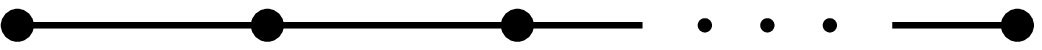}%
\end{picture}%
\setlength{\unitlength}{3947sp}%
\begingroup\makeatletter\ifx\SetFigFont\undefined%
\gdef\SetFigFont#1#2#3#4#5{%
  \reset@font\fontsize{#1}{#2pt}%
  \fontfamily{#3}\fontseries{#4}\fontshape{#5}%
  \selectfont}%
\fi\endgroup%
\begin{picture}(4966,447)(3518,-3275)
\put(8401,-3211){\makebox(0,0)[lb]{\smash{{\SetFigFont{20}{24.0}{\rmdefault}{\mddefault}{\updefault}{\color[rgb]{0,0,0}$s_{n-1}$}%
}}}}
\put(3601,-3211){\makebox(0,0)[lb]{\smash{{\SetFigFont{20}{24.0}{\rmdefault}{\mddefault}{\updefault}{\color[rgb]{0,0,0}$s_1$}%
}}}}
\put(4801,-3211){\makebox(0,0)[lb]{\smash{{\SetFigFont{20}{24.0}{\rmdefault}{\mddefault}{\updefault}{\color[rgb]{0,0,0}$s_2$}%
}}}}
\put(6001,-3211){\makebox(0,0)[lb]{\smash{{\SetFigFont{20}{24.0}{\rmdefault}{\mddefault}{\updefault}{\color[rgb]{0,0,0}$s_3$}%
}}}}
\end{picture}%
}
\] 
The \emph{longest element} $w_0$ in $\Symm_n$ is the element whose shortest words in the $s_i$ are of maximum length among all permutations in $\Symm_n$; it has one-line notation $n(n - 1) \cdots 1$, and its permutation matrix has all $1$s on the main anti-diagonal.  The longest element has multiplicative order $2$; conjugating by it is a natural involution on $\Symm_n$ that may be realized as
\begin{compactitem}
\item sending the word $w_1 \cdots w_n$ to its ``reverse-complement" $(n + 1 - w_n) \cdots (n + 1 - w_1)$;
\item rotating the permutation matrix by $180$ degrees around its center; or
\item substituting $s_i \leftrightarrow s_{n - i}$ in any expression for a permutation as a product of generators. 
\end{compactitem}
This last characterization corresponds to the unique nontrivial automorphism of the Dynkin diagram of $\Symm_n$.

A \emph{descent} in a permutation $w = w_1\cdots w_n$ is a position $i \in [n - 1]$ such that $w_i > w_{i + 1}$.  An \emph{inverse descent} is a descent of the inverse permutation $w^{-1}$; equivalently, it is a value $i \in [n - 1]$ such that the value $i + 1$ appears to its left in the one-line notation for $w$.  In Coxeter language, descents are \emph{right descents} while inverse descents are \emph{left descents}.  The set of descents of $w$ is denoted $\Des(w)$.

Given a permutation $w$, to apply a \emph{Knuth move} is to switch $w_i$ and $w_{i + 1}$ for some $i$, provided that at least one of $w_{i - 1}$ and $w_{i + 2}$ has value between that of $w_i$ and $w_{i + 1}$.  Equivalently, there is a Knuth move from $w$ to $w'$ if the two permutations differ by a simple transposition and have descent sets incomparable under inclusion.  If there is a Knuth move that changes $w$ to $w'$, then there is also a Knuth move (involving the same collection of positions) changing $w'$ to $w$.  Thus there is an (undirected) graph $G_n$ on $\Symm_n$ whose edges are the Knuth moves.

It is easy to see from the characterizations above that $w$ has a descent in position $i$ if and only if $w_0 w w_0$ has a descent in position $n - i$, and similarly that $w$ is connected to $w'$ by a Knuth move if and only if $w_0 w w_0$ is connected to $w_0 w' w_0$ by a Knuth move.  Thus conjugation by $w_0$ is an automorphism of the graph $G_n$.

\subsubsection{Tableaux and Robinson--Schensted}
\label{sec:tableaux and RS}
A \emph{partition} $\lambda = \langle \lambda_1, \ldots, \lambda_k \rangle$ of $n$ is a finite, weakly decreasing sequence of positive integers such that $\lambda_1 + \ldots + \lambda_k = n$.  To indicate that $\lambda$ is a partition of $n$, we write $\lambda \vdash n$, or $|\lambda| = n$. We write $\ell(\lambda)$ for the number of parts of $\lambda$.  A partition may be represented visually by its \emph{Young diagram}, a left-aligned array of boxes having $\lambda_i$ boxes in the $i$th row from the top.  A \emph{standard Young tableau} of \emph{shape} $\lambda$ is a filling of the Young diagram of $\lambda$ with the numbers $1, \ldots, |\lambda|$, each used once, such that numbers increase down columns and across rows.

The \emph{Robinson--Schensted correspondence} (abbreviated \emph{RS} in this paper) is a combinatorial bijection between $\Symm_n$ and pairs $(P, Q)$ of standard Young tableaux of the same shape $\lambda$, a partition of $n$.  It plays a central role in both the combinatorics and representation theory of the symmetric group.  If $w \overset{\mathrm{RS}}{\longleftrightarrow} (P, Q)$, we may sometimes denote $P = P(w)$ and $Q = Q(w)$, and if $P$ and $Q$ have shape $\lambda$ then we also say that $w$ has shape $\lambda$.  We may also refer to $P$ and $Q$ as the \emph{insertion tableau} and \emph{recording tableau} of $w$, respectively.

A \emph{descent} in a standard Young tableau $T$ is a number $i \in [n - 1]$ such that $i + 1$ lies in a lower row in $T$ than $i$, and the set of descents of $T$ is denoted $\Des(T)$.  Descent sets are preserved by RS: if the permutation $w$ corresponds to the pair $(P, Q)$ under RS, then $\Des(w) = \Des(Q)$ and $\Des(w^{-1}) = \Des(P)$.

A \emph{Knuth move} on a tableau $T$ is the operation that switches entries $i$ and $i + 1$, provided that the result is a standard Young tableau and that its descent set is incomparable under inclusion to $\Des(T)$.  Two permutations $w$ and $w'$ are connected by a Knuth move if and only if $P(w) = P(w')$ and $Q(w)$ is connected by a Knuth move to $Q(w')$. 
  Following \cite{assaf_degI}, we call the graph  $\DDD_\lambda$ on tableaux of shape $\lambda$ whose edges are Knuth moves the \emph{dual equivalence graph} of shape $\lambda$.  In fact, RS is a graph isomorphism between the dual equivalence graph $\DDD_\lambda$ and the (induced) subgraph of $G_n$ whose vertices are the permutations with fixed insertion tableau $P$ of shape $\lambda$.

\subsubsection{Evacuation}
While RS does not interact well with the group operation of $\Symm_n$ in general, in the case of conjugation by $w_0$ its behavior is well-understood: there is an involution $e$ on the set of standard Young tableaux of any fixed shape $\lambda$ such that if $w$ in $\Symm_n$ corresponds to $(P, Q)$ under RS, then $w_0 w w_0$ corresponds to $(e(P), e(Q))$.  This operation, named \emph{evacuation} by Sch\"utzenberger, may be computed on a tableau $T$ by:

\begin{itemize}
\item rotating $T$ by 180 degrees, replacing each entry $i$ with $n+1-i$, and restoring the resulting tableau to its original shape using Sch\"utzenberger's \emph{jeu de taquin};
\item using repeated applications of Sch\"utzenberger's \emph{promotion} operation to successively ``evacuate'' the entries of $T$;
\item applying a particular sequence of \emph{Bender--Knuth involutions} to $T$ (see \cite{KirBer}); or
\item a growth diagram, as in \cite[\S A1.2]{EC2}.  
\end{itemize}

It follows from the discussion above, concerning the interaction of descents with RS and with conjugation by $w_0$, that the integer $i$ is a descent for the tableau $T$ if and only if $n - i$ is a descent for $e(T)$.  Consequently, tableaux $T$ and $T'$ are connected by a Knuth move if and only if $e(T)$ and $e(T')$ are, and so evacuation is an automorphism of the graph $\DDD_\lambda$.

\subsection{Affine symmetric group}
\label{sec:affine}

In this section, we describe the affine analogues of permutations, tableaux, and RS.

\subsubsection{Affine permutations}
Abstractly, the \emph{affine symmetric group} $\affS_n$ is the Coxeter group of affine type $\widetilde{A}_{n - 1}$, having generators $\{s_0, s_1, \ldots, s_{n - 1}\}$ and relations $s_i^2 = 1$, $s_is_js_i = s_js_is_j$ if $i - j \equiv\pm1 \pmod{n}$, and $s_is_j = s_js_i$ otherwise.  The corresponding Dynkin diagram is
\[
\resizebox{.3\textwidth}{!}{\begin{picture}(0,0)%
\includegraphics{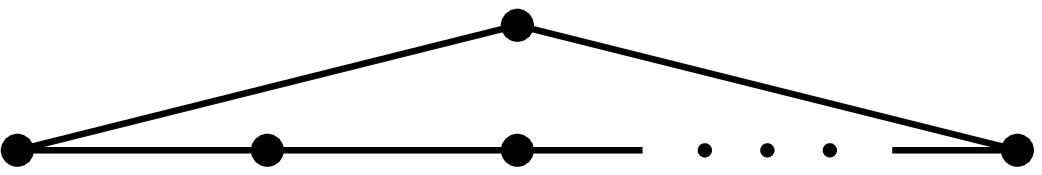}%
\end{picture}%
\setlength{\unitlength}{3947sp}%
\begingroup\makeatletter\ifx\SetFigFont\undefined%
\gdef\SetFigFont#1#2#3#4#5{%
  \reset@font\fontsize{#1}{#2pt}%
  \fontfamily{#3}\fontseries{#4}\fontshape{#5}%
  \selectfont}%
\fi\endgroup%
\begin{picture}(4966,1047)(3518,-3275)
\put(6001,-2611){\makebox(0,0)[lb]{\smash{{\SetFigFont{20}{24.0}{\rmdefault}{\mddefault}{\updefault}{\color[rgb]{0,0,0}$s_0$}%
}}}}
\put(3601,-3211){\makebox(0,0)[lb]{\smash{{\SetFigFont{20}{24.0}{\rmdefault}{\mddefault}{\updefault}{\color[rgb]{0,0,0}$s_1$}%
}}}}
\put(4801,-3211){\makebox(0,0)[lb]{\smash{{\SetFigFont{20}{24.0}{\rmdefault}{\mddefault}{\updefault}{\color[rgb]{0,0,0}$s_2$}%
}}}}
\put(6001,-3211){\makebox(0,0)[lb]{\smash{{\SetFigFont{20}{24.0}{\rmdefault}{\mddefault}{\updefault}{\color[rgb]{0,0,0}$s_3$}%
}}}}
\put(8401,-3211){\makebox(0,0)[lb]{\smash{{\SetFigFont{20}{24.0}{\rmdefault}{\mddefault}{\updefault}{\color[rgb]{0,0,0}$s_{n-1}$}%
}}}}
\end{picture}%
}
\] 
The group elements may be represented as certain periodic bijections of the integers \cite{Lusztig, Erikssons}: we say that a bijection $w: \ZZ \to \ZZ$ is an \emph{affine permutation}\footnote{
Technically, the combinatorial objects defined here are actually the \emph{extended} affine permutations; the affine permutations have the extra condition $\sum_{i=1}^n w(i) = \sum_{i=1}^n i = \frac{n(n+1)}{2}$.  This distinction is not important in what follows: some remarks explicitly invoke the Coxeter structure and so make sense only in the non-extended group, but all the results and proofs are valid regardless which setting one chooses to work in.}
 if $w(i + n) = w(i) + n$ for all $i \in \ZZ$. We will typically write $w_i$ in place of $w(i)$.  As is the case for the finite symmetric group, such permutations may be represented in different ways (see Figure \ref{fig:aff perm}):
\begin{itemize}
\item in \emph{window notation}, as words $[w_1, w_2, \ldots, w_n]$ of length $n$ containing one representative of each equivalence class of integers modulo $n$; or
\item as infinite periodic permutation matrices, having rows and columns indexed by $\ZZ$, one nonzero entry in each row and column, and periodicity under translation by $(n, n)$.
\end{itemize}
For the matrix representation, we use matrix coordinates, so row numbers increase from top to bottom, column numbers increase from left to right, and $(1, 2)$ represents the first row and second column.

\begin{figure}
\[
\resizebox{.4\textwidth}{!}{\input{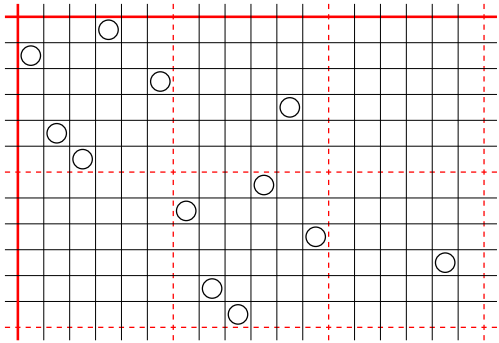}}
\] 
	\caption{The matrix of the affine permutation $[4,1,6,11,2,3]\in\affS_6$. The column to the right of the bold red line is column 1, and analogously for rows.}
\label{fig:aff perm}
\end{figure}

Because $\affS_n$ is infinite, there is no longest element by which to conjugate.  However, there is a natural group automorphism $r: \affS_n \to \affS_n$ that may be characterized in several equivalent ways:
\begin{itemize}
\item in terms of the Coxeter diagram, it is the extension of the diagram automorphism interchanging the simple generators
$
s_i \longleftrightarrow s_{n - i}
$
for $i \in [n - 1]$ and fixing $s_0$;
\item in terms of the permutation matrix, it is the rotation by $180$ degrees that preserves the square $[n] \times [n]$; and
\item in terms of the window notation, it is \emph{reverse-complement}: write the word from right to left, then subtract each entry from $n + 1$, so that $[w_1, \ldots,  w_n]$ becomes $[n + 1 - w_n, \ldots, n + 1 - w_1]$.
\end{itemize}

\begin{remark}
The first bullet point only makes sense for the affine (non-extended) permutations, while the second and third make sense in the full extended group.  

While the operation $r$ does not correspond to conjugation by any element inside $\affS_n$, if we embed this group inside the (much larger) group $\Sym(\ZZ)$ of all permutations of the integers then $r$ is conjugation by the permutation $c^{(n)}$ defined by $c^{(n)}(i) = n + 1 - i$ for all $i \in \ZZ$.
\end{remark}

Let $[\ol{n}]$ be the set of equivalence classes of integers modulo $n$, and for an integer $i$ let $\ol{i}$ denote the class containing $i$.  A \emph{descent} in an affine permutation $w$ is an equivalence class $\ol{i} \in [\ol{n}]$ of positions such that $w_i > w_{i + 1}$.  An \emph{inverse descent} of $w$ is a descent in the inverse affine permutation $w^{-1}$.  As before, in terms of the Coxeter group these are the \emph{right} and \emph{left} descents.    The \emph{descent set} $\Des(w)$ is the set of descents of $w$.

Two affine permutations $w$ and $w'$ are related by a \emph{Knuth move} if $w'$ differs from $w$ by exchanging the two elements $w_i$ and $w_{i + 1}$ (as well as all the other pairs of elements implied by the periodicity condition), provided that $\Des(w)$ and $\Des(w')$ are incomparable.  Equivalently, this requires that at least one of $w_{i - 1}$ and $w_{i + 2}$ lies numerically between $w_i$ and $w_{i + 1}$.  Thus, there is a graph $\tw{G}_n$ whose vertices are the affine permutations, with two permutations connected by an edge if and only if they are related by a Knuth move.  In the language of Kazhdan--Lusztig theory, this graph consists of the bidirected edges in the \emph{$W$-graph} for the affine symmetric group; the induced subgraphs on its connected components are what Stembridge \cite{StembridgeMolecules} calls the ``Kazhdan--Lusztig molecules.''

\subsubsection{Tabloids and AMBC}
\label{sec:AMBC}

Given a partition $\lambda \vdash n$, a \emph{tabloid} of shape $\lambda$ is an equivalence class of fillings of the Young diagram of $\lambda$ with $[\ol{n}]$, where two fillings are considered equivalent if they differ only in the arrangement of elements within rows.  Thus, tabloids are equinumerous with \emph{row-strict tableaux} filled bijectively with $[n]$, in which entries are required to increase along rows (but with no column condition), and with standard skew Young tableaux of shape $\left\langle n, n - \lambda_1, n - \lambda_1 - \lambda_2, \ldots\right\rangle / \left\langle n - \lambda_1, n - \lambda_1 - \lambda_2, \ldots \right\rangle$.  A \emph{descent} in a tabloid $T$ is an element $\ol{i} \in [\ol{n}]$ such that $\ol{i + 1}$ lies in a lower row in $T$ than $\ol{i}$, and the set of descents of $T$ is denoted $\Des(T)$.  Two tabloids $T, T'$ are related by a \emph{Knuth move} if $T'$ differs from $T$ by exchanging two elements $\ol{i}$ and $\ol{i + 1}$, provided that $\Des(T)$ and $\Des(T')$ are incomparable.  The \emph{(Kazhdan--Lusztig) dual equivalence graph} $\AAA_\lambda$ is the graph on tabloids of shape $\lambda$, with two tabloids connected by an edge if and only if they differ by a Knuth move.

The analogue of RS for $\affS_n$ is the \emph{affine matrix ball construction} (\emph{AMBC}), which sends each affine permutation to a triple $(P, Q, \rho)$ where $P$ and $Q$ are \emph{tabloids}
of the same shape $\lambda \vdash n$, and the \emph{weight vector} $\rho$ is an integer vector of length $\ell(\lambda)$ satisfying certain inequalities.  The reverse map $\Psi: (P, Q, \rho) \mapsto w \in \affS_n$ is defined on all triples $(P, Q, \rho)$ consisting of two tabloids of the same shape and an integer weight vector of the correct length; when restricted to weight vectors satisfying the appropriate inequalities, it is the inverse of AMBC. In particular, if $w = \Psi(P,Q,\rho)$, then we have $w \ambcs (P,Q,\rho')$ for some weight vector $\rho'$. (In this paper we mostly ignore the weight vector; but see Question~\ref{weight question} below.) 

As the next result shows, AMBC respects descents in the same way as RS.
\begin{prop}[{\cite[Prop.~3.6]{CLP}}]
\label{prop 3.6}
If $w \ambcs (P, Q, \rho)$ then $\Des(w) = \Des(Q)$ and $\Des(w^{-1}) = \Des(P)$.
\end{prop}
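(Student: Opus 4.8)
The plan is to reduce the two stated equalities to a single one via the inverse symmetry of AMBC, and then to prove that equality by tracking the descent condition through the matrix-ball construction underlying AMBC. First I would use the fact (built into the construction in \cite{CPY}, and parallel to Sch\"utzenberger's identity $P(w^{-1}) = Q(w)$ for ordinary RS) that transposing the periodic matrix of $w$ interchanges the insertion and recording data: if $w \ambcs (P, Q, \rho)$, then $w^{-1} \ambcs (Q, P, \rho')$ for a suitably transformed weight vector $\rho'$. Granting this, applying the equality $\Des(w) = \Des(Q)$ to $w^{-1}$ yields $\Des(w^{-1}) = \Des(P)$, so it suffices to prove the single statement $\Des(w) = \Des(Q)$.

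For this, the essential observation is that membership of $\ol{i}$ in $\Des(w)$ is a local condition, depending only on the inequality $w_i > w_{i+1}$, that is, on the relative heights of the two balls in the consecutive columns indexed by $i$ and $i+1$. I would prove a ``local comparison lemma'' for the matrix-ball construction: the recorded row of $\ol{i+1}$ in $Q$ lies strictly below that of $\ol{i}$ exactly when $w_i > w_{i+1}$. This is the affine analogue of the row-bumping lemma that underlies the classical identity $\Des(w) = \Des(Q)$ for RS, where one compares the bumping paths of two consecutive insertions; here the two ``paths'' are the channels attached to the two adjacent columns, and the inequality $w_i > w_{i+1}$ forces the later channel to settle in a strictly lower row. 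Matching this against the tabloid descent condition---that $\ol{i+1}$ lie in a lower row of $Q$ than $\ol{i}$---gives $\ol i \in \Des(w) \iff \ol i \in \Des(Q)$ for every residue class, and hence the proposition.

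The main obstacle is making this comparison rigorous in the periodic, iterative setting of AMBC, in which---unlike one-pass Schensted insertion---the assignment of balls to rows emerges from a global steady state rather than from reading a word left to right, so a priori the effect of the two columns $i, i+1$ could propagate all the way around the period. I expect the cleanest way to control this is to exploit a stabilization property of AMBC: on a sufficiently wide window the row-assignment of a given residue class is governed by a finite matrix-ball computation that agrees with ordinary RS, reducing the local comparison to the already-known finite case. Verifying that such a window can be chosen uniformly, and that widening it does not disturb the relative rows of $\ol i$ and $\ol{i+1}$, is the technical heart of the argument; once it is in place the two-column comparison, and with it the proposition, follows.
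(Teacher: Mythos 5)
A preliminary point: the paper does not prove this proposition at all --- it is imported as background from \cite{CLP} (their Prop.~3.6) --- so there is no in-paper argument to measure yours against; your proposal has to stand on its own. Judged that way, it is a plan rather than a proof, and the gap sits exactly where you flag it: the ``local comparison lemma'' is not an auxiliary step, it \emph{is} the proposition. Your opening reduction is sound --- the inverse symmetry $w \ambcs (P,Q,\rho) \Rightarrow w^{-1} \ambcs (Q,P,\rho')$ is a genuine fact (it is \cite[Prop.~3.1]{CLP}, which this paper itself invokes in the proof of Theorem~\ref{thm: def1 for evacuation}) --- so the two equalities do collapse to one. But the remaining claim, that $\ol{i+1}$ lands in a strictly lower row of $Q$ than $\ol{i}$ exactly when $w_i > w_{i+1}$, is asserted only by analogy with the finite row-bumping lemma and never argued in the AMBC setting. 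Since AMBC assigns rows through channels and a global numbering rather than by left-to-right insertion, the finite intuition does not transfer for free, and nothing in your write-up controls the around-the-period propagation that you yourself identify as ``the technical heart.''

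Moreover, the fallback stabilization strategy is aimed at the wrong tabloid. The stabilization theorem actually available, \cite[Thm.~7.3]{CPY} (used in \S\ref{sec:asymptotic} of this paper), says that the \emph{insertion} tabloid $P(w)$ is the limit of the finite insertion tableaux $P(w^{(k)})$ of nested windows; no analogous statement is provided for the recording tabloid $Q$, which is what your reduction leaves you needing. The workable order of operations is the reverse of yours: prove $\Des(w^{-1}) = \Des(P)$ by windows --- there the finite identity $\Des(P(u)) = \Des(u^{-1})$ combines with the stabilization of the row of each fixed integer $i$, and the boundary issue is harmless because $i$ and $i+1$ are consecutive, so standardizing the window word cannot separate them --- and only then apply the inverse symmetry to deduce $\Des(w) = \Des(Q)$. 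As written, neither the channel-theoretic comparison nor the window reduction is actually carried out, so the proposal does not establish the proposition.
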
  
As a consequence of this proposition, AMBC also respects Knuth moves.  In fact, more is true.
\begin{prop}[{\cite[Thm.~3.11, Lem.~3.22, and Prop.~3.23]{CLP}}]
\label{covering theorem}
If $w$ is an affine permutation and $w'$ differs from $w$ by a Knuth move, then $P(w') = P(w)$ and $Q(w')$ differs from $Q(w)$ by a Knuth move.  When restricted to a single connected component in the graph $\tw{G}_n$, the map $w \mapsto Q(w)$ is a graph covering of $\AAA_\lambda$.  Moreover, if tabloids $Q$ and $Q'$ are related by a Knuth move, then for any tabloid $P$ and weight $\rho$, the permutation $w = \Psi(P, Q, \rho)$ is related by a Knuth move to a permutation $w'$ with $Q(w') = Q'$.
\end{prop}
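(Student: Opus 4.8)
The plan is to treat the first assertion as the analytic core and to bootstrap the covering and lifting statements from it, using only the descent compatibility of Proposition~\ref{prop 3.6} together with the fact that a tabloid Knuth move is determined by its index. Accordingly, I would first prove that a Knuth move $w \to w'$ preserves the insertion tabloid and performs a single tabloid Knuth move on the recording tabloid at a matching index. Writing the move as $w' = w\,s_i$ for a position-class $\ol i$ satisfying the incomparability condition, the task is to follow this elementary swap through the matrix-ball steps of AMBC. Shape invariance should come from a Greene--Kleitman-type chain statistic of the periodic permutation matrix, which is manifestly unchanged when $w_i$ and $w_{i+1}$ are exchanged under the Knuth hypothesis; invariance of the full tabloid $P(w)$, together with the identification of the induced change in $Q(w)$ as exactly the tabloid Knuth move at index $\ol i$, then requires a direct analysis at the level of the construction. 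This is the step I expect to be the main obstacle: it is where one must genuinely understand the AMBC algorithm rather than merely its descent bookkeeping, and it is the content of the cited results of \cite{CLP}.

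Granting this, I would deduce the lifting statement cheaply. Fix $w$ with $Q(w) = Q$ and a tabloid Knuth move $Q \to Q'$ at some index $\ol i$, available precisely because $\Des(Q)$ and $\Des(Q')$ are incomparable. By Proposition~\ref{prop 3.6} we have $\Des(w) = \Des(Q)$, so this incomparability is a local condition on the classes $\ol{i-1}, \ol i, \ol{i+1}$ that transfers verbatim to $w$; hence the position-$\ol i$ swap is a legal Knuth move $w \to w'$. By the first assertion, $Q(w')$ differs from $Q$ by the tabloid Knuth move at the same index $\ol i$, and since the outcome of a tabloid Knuth move at a fixed index is unique, $Q(w') = Q'$.

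Finally I would assemble the covering statement. The first assertion already shows that, on a connected component of $\tw{G}_n$, the map $w \mapsto Q(w)$ is a graph homomorphism into $\AAA_\lambda$ that never collapses an edge. To promote this to a covering I must verify that it is a local isomorphism at each vertex. Local surjectivity is exactly the lifting just established. For local injectivity, observe that the Knuth moves at $w$ are indexed by the position-classes $\ol i$ at which a swap is legal, one neighbour per index, and that the induced tabloid moves at $Q(w)$ carry the same indices; since distinct indices produce distinct tabloid neighbours, the edge map at $w$ is a bijection onto the edges at $Q(w)$. Surjectivity onto $\AAA_\lambda$ then follows from path-lifting together with connectivity, completing the covering.
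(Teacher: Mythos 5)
A point of comparison first: the paper does not prove Proposition~\ref{covering theorem} at all --- all three assertions are imported from \cite{CLP} (Thm.~3.11, Lem.~3.22, Prop.~3.23), so the only ``proof'' in the paper is the citation. Your proposal matches this stance on the core assertion: you defer the statement that a Knuth move on $w$ preserves $P(w)$ and induces a Knuth move on $Q(w)$ to the cited work, exactly as the paper does. Your added content is the claim that the lifting and covering assertions then follow formally from that core together with Proposition~\ref{prop 3.6} and the existence/uniqueness of typed Knuth moves. That reduction is broadly sound --- it reflects how these statements are related in \cite{CLP} --- but three of your supporting claims are wrong as stated and need repair.

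(1) In the lifting step, it is not true that the incomparability condition ``transfers verbatim'' so that ``the position-$\ol{i}$ swap'' on $w$ is legal and of the right type. If, say, $w_i > w_{i+1} < w_{i+2}$ with $w_i < w_{i+2}$, then swapping positions $i, i+1$ is either not a Knuth move at all or is a Knuth move of type $\ol{i-1}$, not $\ol{i}$; the type-$\ol{i}$ move at $w$ is instead realized by swapping positions $i+1, i+2$. What your argument actually needs is the existence and uniqueness of a typed Knuth move on affine permutations, which is \cite[Prop.~3.8]{CLP}; the paper points to this right after Proposition~\ref{prop:exists unique Knuth move}. (2) In the covering step, ``distinct indices produce distinct tabloid neighbours'' is false: a Knuth move can have two types (the paper's own example: the move connecting $[1,5,0]$ and $[1,0,5]$ has types $\ol{1}$ and $\ol{2}$), so the map from admissible indices to neighbours need not be injective on either the permutation or the tabloid side. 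Local injectivity should instead run through uniqueness: if $w', w''$ are Knuth-neighbours of $w$ with $Q(w') = Q(w'')$, then $\Des(w') = \Des(w'')$ by Proposition~\ref{prop 3.6}, so the two moves from $w$ share a type, and uniqueness of the typed move forces $w' = w''$. (3) Your appeal to ``connectivity'' for surjectivity onto $\AAA_\lambda$ fails: $\AAA_\lambda$ is disconnected in general (the paper's example of shape $\langle 2,2 \rangle$ is a disjoint union of two paths). Edge-lifting shows only that the image of a connected component of $\tw{G}_n$ is a single connected component of $\AAA_\lambda$, and the covering assertion has to be read accordingly; asserting connectivity of the target is not an available shortcut.
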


\section{Affine evacuation}
\label{sec:defining evacuation}

In this section, we first establish the existence of an involution $e$ on tabloids of a given shape, analogous to evacuation on tableaux.  We show that it coincides with the usual evacuation on the tabloid analogues of standard tableaux (validating the choice of terminology), and that it has a simple description on tabloids of rectangular shape.  Finally, we give three alternate characterizations of the map $e$: in terms of an operation called the \emph{combinatorial $R$-matrix}; by sending tabloids through the Robinson--Schensted--Knuth correspondence; and as an ``asymptotic" (\emph{\`a la} Pak \cite{pak}) version of the usual evacuation map.  The first two reformulations will be used in \S\ref{sec: counting fixed points} to study the fixed points of the map $e$.

\subsection{Evacuation exists}
The main result of this section is that the rotation operation $r$ defined in \S\ref{sec:affine} interacts with AMBC in the way one would hope.
\begin{theorem}
\label{evacuation exists theorem}
There is an involution $e$ on the set of tabloids of shape $\lambda$ such that if $w \ambcs (P, Q, \rho)$ then $r(w) \ambcs (e(P), e(Q), \rho')$ for some weight $\rho'$.
\end{theorem}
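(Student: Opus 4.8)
The plan is to build $e$ on the recording tabloid and to deduce everything else from the symmetries of $r$ together with the structural properties of AMBC recorded in Propositions~\ref{prop 3.6} and~\ref{covering theorem}. First I would record the elementary features of the reverse-complement map, which in window notation reads $r(w)_i = n+1-w_{n+1-i}$. From this formula one checks directly that $r$ is an involution ($r^2=\mathrm{id}$), that it commutes with taking inverses ($r(w^{-1})=r(w)^{-1}$, since $r$ is conjugation by the fixed involution $c^{(n)}$), and that it flips descents: writing $\sigma(\ol i)=\ol{n-i}$ for the induced bijection of $[\ol n]$, one has $\Des(r(w))=\sigma(\Des(w))$. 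Because $\sigma$ is a bijection of $[\ol n]$, it preserves incomparability of descent sets under inclusion, so $r$ carries Knuth moves to Knuth moves; in particular $r$ maps each connected component of $\tw{G}_n$ isomorphically onto another component.

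The heart of the argument is to show that $Q(r(w))$ depends only on $Q(w)$, after which I set $e(Q(w)):=Q(r(w))$. Here I would use two structural inputs about AMBC: the inverse symmetry, which exchanges the two tabloids (so $P(v^{-1})=Q(v)$ and $Q(v^{-1})=P(v)$, exactly as for RS), and the fact (to be extracted from the AMBC theory, cf.\ Proposition~\ref{covering theorem}) that the insertion tabloid is a complete invariant of the connected components of $\tw{G}_n$, so that each component has constant $P$ and distinct components have distinct $P$. Granting these, suppose $Q(w_1)=Q(w_2)$. Applying inverse symmetry gives $P(w_1^{-1})=Q(w_1)=Q(w_2)=P(w_2^{-1})$, so $w_1^{-1}$ and $w_2^{-1}$ lie in a common component of $\tw{G}_n$. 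Since $r$ sends components to components and $P$ is constant on each, $P(r(w_1^{-1}))=P(r(w_2^{-1}))$. Finally $r(w_i^{-1})=r(w_i)^{-1}$, and inverse symmetry converts this into $Q(r(w_1))=Q(r(w_2))$, as desired. Thus $e$ is well defined on tabloids, and it is defined on all of them because AMBC is surjective onto tabloids of a given shape.

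With $e$ in hand, the remaining assertions fall out formally. The involution property is $e(e(Q(w)))=e(Q(r(w)))=Q(r^2(w))=Q(w)$. The insertion-tabloid half of the theorem needs no new work: using inverse symmetry and $r(w^{-1})=r(w)^{-1}$ one computes $P(r(w))=Q(r(w)^{-1})=Q(r(w^{-1}))=e(Q(w^{-1}))=e(P(w))$, so the same map $e$ governs both slots, and $r(w)\ambcs(e(P),e(Q),\rho')$ for the weight $\rho'$ produced by AMBC. It remains only to check that $e$ preserves the shape $\lambda$, i.e.\ that $r(w)$ has the same shape as $w$; I would obtain this from the invariance of the AMBC shape under the $180^\circ$ rotation $r$, either via a Greene--Shi description of the shape in terms of rotation-symmetric chain and antichain statistics in the periodic permutation matrix, or by noting that $r$ is the extension of a Coxeter diagram automorphism and hence stabilizes each two-sided Kazhdan--Lusztig cell, whose label is exactly the shape.

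The main obstacle is isolating and justifying the two AMBC inputs used in the middle paragraph. The inverse symmetry of AMBC is the expected analogue of the RS transpose symmetry, but one must confirm it holds at the level of tabloids (the weight vector may transform, which is harmless here). More delicate is the claim that $P$ is a complete invariant of the components of $\tw{G}_n$: Proposition~\ref{covering theorem} yields that $P$ is constant on components and that $Q$ covers $\AAA_\lambda$, but the converse---that two permutations with equal $P$ necessarily lie in the same component, rather than in distinct components distinguished only by their weight vectors---is precisely where the weight data must be controlled, and is the crux on which the whole construction rests. Once this is secured, the argument above is purely formal.
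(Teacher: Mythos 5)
Your formal superstructure is the same as the paper's: you define $e$ on recording tabloids by $e(Q(w)) := Q(r(w))$, and then the involution property, the statement for $P$ (via inverses and $r(w^{-1})=r(w)^{-1}$), and shape-invariance (via a Greene--Kleitman--type argument, which is the paper's Proposition~\ref{rotation-shape}) all follow formally. The problem is the step you yourself flagged as the crux, and it is fatal as stated: the claim that $P$ is a complete invariant of the components of $\tw{G}_n$ is not merely unproven, it is \emph{false}. In affine type A, Knuth classes are strictly finer than the fibers of $w \mapsto P(w)$; this is exactly the ``monodromy'' phenomenon that \cite{CLP} is devoted to, and it is the reason the weight vector exists at all. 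A concrete counterexample in the non-extended group $\affS_3$: the permutations $w_1 = [3,2,1]$ and $w_2 = [6,2,-2]$ are both involutions of shape $\langle 1,1,1\rangle$ with descent set $\{\ol{1},\ol{2}\}$, so by Proposition~\ref{prop 3.6} (applied to $w$ and $w^{-1}$) they have the \emph{same} $P$ and the same $Q$, namely the unique column tabloid with that descent set; they differ only in weight. But a direct check of the Knuth condition shows that the only Knuth move available from $[3,2,1]$ is the swap in positions $3,4$, producing $[-2,2,6]$, and the only Knuth move from $[-2,2,6]$ leads back; hence the component of $[3,2,1]$ in $\tw{G}_3$ is exactly $\{[3,2,1],[-2,2,6]\}$ and does not contain $[6,2,-2]$. (In the extended group the failure is even starker: every shift $i \mapsto i+c$ has empty descent set, hence the same one-row $P$, and admits no Knuth moves at all, giving infinitely many singleton components with identical $P$.) Note that $w_1, w_2$ have equal $Q$, so they are precisely an instance on which your well-definedness claim $Q(r(w_1)) = Q(r(w_2))$ must be proved; the conclusion is true (it is part of the theorem), but your argument, which needs $w_1^{-1}$ and $w_2^{-1}$ to be Knuth-connected, gives nothing for them.

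The paper's proof is engineered to avoid this obstruction rather than to overcome it. It fixes an arbitrary pair $(P,\rho)$ and proves that $T \mapsto Q\bigl(r(\Psi(P,T,\rho))\bigr)$ is independent of $(P,\rho)$, by induction on the distance in $\AAA_\lambda$ from $T$ to a reverse row superstandard tabloid. The base case is forced by bookkeeping you already have: if $Q(w) = \RRSS(\lambda,\ol{k})$, then $\Des(r(w)) = \{\ol{n-k}\}$ and $r(w)$ has shape $\lambda$ (Propositions~\ref{rotation-shape} and~\ref{rotation-descent-set}), so $Q(r(w))$ must equal $\RRSS(\lambda,\ol{n-k})$, the unique tabloid of that shape and descent set. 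The inductive step uses the two ingredients your sketch does not isolate: the final clause of Proposition~\ref{covering theorem}, which lifts a tabloid Knuth move through $\Psi$ at an \emph{arbitrary} weight (this is exactly where the weight data gets controlled), and Proposition~\ref{prop:exists unique Knuth move}, the uniqueness of a Knuth move of prescribed type, which pins down the image of $T$ from the already-determined image of a Knuth neighbor. Crucially, this argument never requires two permutations with equal $P$ to be Knuth-connected to one another; it only requires each tabloid to be connected in $\AAA_\lambda$ to a reverse row superstandard one. That is what makes it immune to the monodromy problem on which your reduction founders; any repair of your approach would have to replace ``same component'' by ``same component up to the monodromy action on weights,'' which is in essence what the paper's induction accomplishes.
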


The main idea of the proof is to explicitly construct $e$ as an automorphism of the dual equivalence graph $\AAA_\lambda$ by showing that $r$ interacts in predictable ways with descents and Knuth moves.  The proof is spread over the next several results, and the statement of the theorem is subsumed by Theorem~\ref{thm: def1 for evacuation} below.

\begin{prop}
\label{rotation-shape}
If $w$ has shape $\lambda$ then $r(w)$ has shape $\lambda$.
\end{prop}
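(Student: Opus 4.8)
The plan is to characterize the shape attached to an affine permutation $w$ by AMBC directly from its periodic permutation matrix, via an affine analogue of Greene's theorem, and then to observe that the $180$-degree rotation defining $r$ preserves exactly the combinatorial data on which this characterization depends. The point is that shape is \emph{not} obviously preserved just because $r$ is a symmetry of the Knuth graph $\tw{G}_n$: Proposition~\ref{covering theorem} shows shape is constant on each connected component, but a graph automorphism could in principle carry a component to another component of a different shape, so genuine matrix input is needed.

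First I would recall the Greene--Kleitman description of the shape $\lambda = \lambda(w)$ produced by AMBC (as developed in \cite{Shi_GRS, CPY}): the partial sum $\lambda_1 + \cdots + \lambda_k$ equals a Greene-type invariant $d_k(w)$ counting, per period, the maximal number of balls in a union of $k$ increasing chains of balls in the periodic matrix of $w$ (collections of balls proceeding strictly down and to the right), while dually $\lambda'_1 + \cdots + \lambda'_k$ counts unions of $k$ decreasing chains. Since $\lambda \vdash n$ and there are exactly $n$ balls per period, these partial sums determine $\lambda$ completely. Next, recall that in the matrix picture $r$ is the rotation $(i,j) \mapsto (n+1-i,\, n+1-j)$ extended periodically. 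The central observation -- identical to the finite mechanism relating evacuation to Greene's theorem (cf.\ \cite[\S A1.2]{EC2}) -- is that this rotation carries increasing chains to increasing chains: if balls $(i_1,j_1), \ldots, (i_m,j_m)$ satisfy $i_1 < \cdots < i_m$ and $j_1 < \cdots < j_m$, their images listed in reverse order again proceed strictly down and to the right, with the same cardinality. As $r$ is an involution of $\ZZ^2$ normalizing the period lattice generated by $(n,n)$, it sends periodic unions of $k$ increasing chains to periodic unions of $k$ increasing chains, preserving the per-period count. Hence $d_k(r(w)) = d_k(w)$ for all $k$, so $r(w)$ and $w$ have the same partial sums and therefore the same shape $\lambda$.

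It is worth noting the parallel with the finite case that frames the whole approach: $r$ is reverse-complement, and reversing positions (reflecting the matrix across a vertical axis) or complementing values (reflecting across a horizontal axis) each \emph{interchange} increasing and decreasing chains and so conjugate the shape, $\lambda \mapsto \lambda'$; performing both recovers $\lambda'' = \lambda$, which is another way to see rotation-invariance. I expect the main obstacle to be technical rather than conceptual: one must state the affine Greene's theorem precisely and check that the per-period normalization of $d_k$ is well defined and rotation-invariant on the infinite periodic matrix -- in particular that the maximal union of $k$ increasing chains is realized by periodic chains, so that the ``count per period'' is unambiguous. Once that Greene characterization of the AMBC shape is in hand, the conclusion follows at once from the geometric observation above, and the same equivariance of the matrix-ball procedure under $180$-degree rotation is what I would then leverage toward the full statement of Theorem~\ref{evacuation exists theorem}.
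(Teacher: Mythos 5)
Your strategy is the paper's strategy in spirit---reduce the AMBC shape to a Greene--Kleitman-type invariant of the ball configuration and observe that the $180$-degree rotation respects it---but there is a genuine gap at the exact point you flag and then defer: the ``affine Greene's theorem'' in per-period form is the entire technical content of the argument, and it is neither proved nor literally available in the references you cite. The difficulty is not mere bookkeeping. In the infinite periodic matrix, a single equivalence class of balls $\{(i+kn,\,j+kn)\}_{k\in\ZZ}$ already forms an infinite increasing (southeast) chain, so ``the maximal number of balls per period in a union of $k$ increasing chains'' needs an asymptotic-density definition, a proof that the supremum is attained by periodic chains, and a proof that the resulting invariants $d_k$ compute the partial sums of the AMBC shape. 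As written, your proof reduces Proposition~\ref{rotation-shape} to an unproven statement. (Your framing remark is correct, though: Proposition~\ref{covering theorem} alone cannot give the result, since an automorphism of $\tw{G}_n$ could a priori permute components of different shapes, so genuine matrix input is indeed required.)

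The paper closes precisely this gap by reformulating the Greene-type invariant on a \emph{finite} object: the Shi poset $P_w$ on the $n$ equivalence classes of balls, where $C_1 < C_2$ iff some ball of $C_2$ is northeast of some ball of $C_1$. The citation \cite[Thm.~9.4]{CPY} then says the AMBC shape of $w$ equals the Greene--Kleitman shape of $P_w$ (defined via maximal unions of $k$ antichains), with no per-period normalization needed. Your geometric observation reappears there in dual form: rotation reverses the northeast order, so the Shi poset of $r(w)$ is the dual of $P_w$, and the proof concludes with the easy fact that a finite poset and its dual have the same Greene--Kleitman shape. So your central insight---that rotation carries increasing chains to increasing chains---is correct and is the same fact the paper uses, but to make your version rigorous you would either have to prove the per-period affine Greene theorem from scratch or show it equivalent to the Shi-poset statement, which is exactly the step your proposal leaves open.
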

\begin{proof}
Given an affine permutation $w$, the nonzero entries (\emph{balls}) in its permutation matrix belong to $n$ equivalence classes under translation by $(n, n)$.  Following \cite[\S3.2]{CPY}, define the \emph{Shi poset} $P_w$ of $w$ as follows: the vertices of $P_w$ are equivalence classes of balls, and one equivalence class $C_1$ is less than another equivalence class $C_2$ if there exist balls $b_1 \in C_1$ and $b_2 \in C_2$ such that $b_2$ is north-east of $b_1$.  (Because of the periodicity of the matrix, it is equivalent to ask that every $b_1 \in C_1$ has some $b_2 \in C_2$ that is north-east of it, or that every $b_2 \in C_2$ has some $b_1 \in C_1$ that is south-west of it.)

The \emph{Greene--Kleitman shape} \cite{Greene, GreeneKleitman} of a finite poset $P$ is the partition $\langle \lambda_1, \lambda_2, \ldots\rangle$ such that $\lambda_1$ is the size of the largest antichain of $P$, $\lambda_1 + \lambda_2$ is the size of the largest union of two antichains of $P$, and so on.  It is easy to see that the Greene--Kleitman shape of any poset is equal to the Greene--Kleitman shape of its dual.


By \cite[Thm.~9.4]{CPY}, using the description of Shi's algorithm given in \cite[\S9]{CPY}, we know that the shape of $w$ is equal to the Greene--Kleitman shape of $P_w$. It is straightforward from the definition that the Shi poset of $r(w)$ is the dual of the Shi poset of $w$. Thus the shapes of $w$ and of $r(w)$ are both equal to the Greene--Kleitman shape of $P_w$, and so are equal to each other.
\end{proof}

\begin{prop}
\label{rotation-descent-set}
If $w$ has descent set $I = \{\ol{i_1}, \ldots, \ol{i_k}\} \subset [\overline{n}]$ and inverse descent set $J = \{\ol{j_1}, \ldots, \ol{j_\ell}\} \subset [\overline{n}]$ then $r(w)$ has descent set $n - I = \{\ol{n - i_1}, \ldots, \ol{n - i_k}\}$ and inverse descent set $n - J = \{\ol{n - j_1}, \ldots, \ol{n - j_\ell}\}$.
\end{prop}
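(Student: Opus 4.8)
The plan is to reduce everything to the explicit formula for $r$ as a bijection of $\ZZ$ and then compute directly. Writing $c = c^{(n)}$ for the involution $c(i) = n+1-i$, the Remark preceding this proposition identifies $r$ with conjugation by $c$ inside $\Sym(\ZZ)$, so $r(w) = c \circ w \circ c^{-1}$. Since $c = c^{-1}$, unwinding this yields the closed form
\[
r(w)(i) = n+1 - w(n+1-i) \qquad \text{for all } i \in \ZZ,
\]
which one can sanity-check against the reverse-complement description of the window notation: as $j$ ranges over $1, \ldots, n$ the entry $r(w)_j = n+1 - w(n+1-j)$ produces exactly $[n+1-w_n, \ldots, n+1-w_1]$.

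First I would establish the claim about the (right) descent set using this formula. Comparing $r(w)(i) = n+1 - w(n+1-i)$ with $r(w)(i+1) = n+1 - w(n-i)$ and cancelling the $n+1$, the inequality $r(w)(i) > r(w)(i+1)$ is equivalent to $w(n-i) > w\big((n-i)+1\big)$. Hence $\ol{i} \in \Des(r(w))$ if and only if $\ol{n-i} \in \Des(w)$, which is precisely the assertion $\Des(r(w)) = n - I$. Along the way one should record that the descent condition is invariant under $i \mapsto i+n$, since $w(i+n) = w(i)+n$, so that all of these statements are well-posed on the classes in $[\ol{n}]$ and the index shift $i \mapsto n-i$ descends to $[\ol{n}]$.

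For the inverse descent set I would avoid recomputing and instead exploit that $r$ is a group automorphism (equivalently, that $r(w) = c\,w\,c^{-1}$), which gives $r(w)^{-1} = c\,w^{-1}\,c^{-1} = r(w^{-1})$. Applying the descent computation already established, now with $w^{-1}$ in place of $w$, then yields
\[
\Des\big(r(w)^{-1}\big) = \Des\big(r(w^{-1})\big) = n - \Des(w^{-1}) = n - J,
\]
as desired.

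I do not expect a serious obstacle here: the whole argument is a direct unwinding of the definitions once the formula $r(w)(i) = n+1 - w(n+1-i)$ is in hand, and the inverse-descent half is formal given the automorphism property. The only points requiring genuine care are the bookkeeping with the shift $i \mapsto n-i$ and its compatibility with the equivalence classes $[\ol{n}]$, and confirming that the conjugation formula matches the window-notation description of $r$; both are routine.
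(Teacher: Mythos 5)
Your proof is correct and is essentially the paper's own argument made explicit: the paper disposes of this proposition with the single line that it ``is immediate from any of the definitions of $r$,'' and your computation via $r(w)(i) = n+1-w(n+1-i)$, combined with the identity $r(w)^{-1} = r(w^{-1})$ to handle inverse descents, is precisely the routine verification being alluded to. Nothing further is needed.
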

\begin{proof}
This is immediate from any of the definitions of $r$.
\end{proof}

\begin{definition}
If a Knuth move connects two affine permutations $w$ and $w'$, and $\ol{i}$ is contained in one of $\Des(w)$ and $\Des(w')$ while $\ol{i + 1}$ is contained in the other, then we say that the Knuth move has \emph{type} $\ol{i}$.  We extend this definition verbatim to Knuth moves on tabloids. 
\end{definition}

A Knuth move may have either one or two types: for example, the Knuth move connecting $[0, 2, 4]$ to $[0, 4, 2]$ has unique type $\ol{2}$, whereas the Knuth move connecting $[1, 5, 0]$ and $[1, 0, 5]$ has types $\ol{1}$ and $\ol{2}$.  Note that a Knuth move of type $\ol{i}$ is realized by switching $\ol{i}$ and $\ol{i+1}$, or $\ol{i+1}$ and $\ol{i+2}$.
Thus, when a Knuth move has two types, they are adjacent in the cyclic order on $[\ol{n}]$.
\begin{prop}
\label{prop:exists unique Knuth move}
Suppose $T$ is a tabloid and exactly one of $\ol{i}$ and $\ol{i+1}$ belongs to $\Des(T)$. Then there exists a unique tabloid $T'$ connected to $T$ by a Knuth move of type $\ol{i}$.
\end{prop}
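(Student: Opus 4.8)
The plan is to reduce everything to the rows occupied by the three consecutive values $\ol{i}, \ol{i+1}, \ol{i+2}$. Since a tabloid is determined by the row containing each value, write $a, b, c$ for the rows of $\ol{i}, \ol{i+1}, \ol{i+2}$ in $T$. By definition $\ol{i} \in \Des(T)$ exactly when $b > a$, and $\ol{i+1} \in \Des(T)$ exactly when $c > b$, so the hypothesis says that exactly one of $b > a$, $c > b$ holds. I split into the two cases (I) $b > a$ and $c \le b$, and (II) $b \le a$ and $c > b$. The two cases are parallel, so I will carry out (I) in detail and only indicate the changes for (II).

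By the remark preceding the statement, any Knuth move of type $\ol{i}$ is obtained either by exchanging $\ol{i}$ with $\ol{i+1}$ (call the result $M_1$) or by exchanging $\ol{i+1}$ with $\ol{i+2}$ (call it $M_2$), so these are the only two candidates to test. Since exactly one of $\ol{i}, \ol{i+1}$ lies in $\Des(T)$, a candidate is a Knuth move of type $\ol{i}$ precisely when, in the new tabloid $T'$, the descent among $\{\ol{i}, \ol{i+1}\}$ has moved to the other element—that is (in case (I)) when $\ol{i} \notin \Des(T')$ and $\ol{i+1} \in \Des(T')$. I will use the fact that such a move is automatically legal: the two memberships $\ol{i} \in \Des(T) \setminus \Des(T')$ and $\ol{i+1} \in \Des(T') \setminus \Des(T)$ force $\Des(T)$ and $\Des(T')$ to be incomparable.

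The core is then a one-line computation of how $M_1$ and $M_2$ act on the two relevant descent conditions. Exchanging $\ol{i}$ and $\ol{i+1}$ turns the pair of conditions $(\,b > a,\ c > b\,)$ into $(\,a > b,\ c > a\,)$, while exchanging $\ol{i+1}$ and $\ol{i+2}$ turns it into $(\,c > a,\ b > c\,)$. Feeding case (I) into this, the comparison between $a$ and $c$ decides the outcome: if $c > a$ then $M_1$ is a nontrivial move with $\ol{i} \notin \Des(T')$ and $\ol{i+1} \in \Des(T')$, hence of type $\ol{i}$, while $M_2$ leaves $\ol{i}$ a descent and so is not of type $\ol{i}$; if instead $c \le a$ then symmetrically $M_2$ is the type-$\ol{i}$ move and $M_1$ is not. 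Either way exactly one candidate succeeds, which gives simultaneously existence and uniqueness; case (II) runs the same way (now one wants $\ol{i+1}$ to leave and $\ol{i}$ to enter $\Des$), with the roles of $M_1$ and $M_2$ interchanged.

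The step I expect to require the most care is the handling of the ties, i.e.\ when $\ol{i}, \ol{i+1}$ or $\ol{i+1}, \ol{i+2}$ share a row, since there I must confirm both that the selected move is genuinely nontrivial and that the rejected move really fails to be of type $\ol{i}$ (rather than merely producing a comparable descent set). The hypothesis is exactly what forecloses the bad coincidences: for instance, in case (II) the equality $b = a$ would force $c > b = a$, so whenever the comparison directs us to $M_1$ one automatically has $b < a$ and $M_1$ is nontrivial. Pinning down these boundary cases is what upgrades ``there is a type-$\ol{i}$ move'' to ``there is a unique one.''
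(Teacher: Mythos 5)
Your proof is correct and follows essentially the same route as the paper's: both reduce to the two candidate exchanges ($\ol{i} \leftrightarrow \ol{i+1}$ and $\ol{i+1} \leftrightarrow \ol{i+2}$), assume by symmetry that $\ol{i}$ is the descent, and then split on the comparison between the rows of $\ol{i}$ and $\ol{i+2}$ (your $c>a$ versus $c\le a$), concluding that exactly one candidate flips both descent statuses. Your explicit remarks that incomparability of descent sets is automatic and that ties ($a=b$ or $b=c$) cannot spoil nontriviality are details the paper leaves implicit, but the underlying argument is the same.
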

(The corresponding statement for permutations is \cite[Prop.~3.8]{CLP}.)
\begin{proof}
If a single Knuth move affects both descents, it must either exchange the entries $\ol{i}$ and $\ol{i+1}$ or the entries $\ol{i+1}$ and $\ol{i+2}$ in $T$. By symmetry, we may suppose $\ol{i}\in \Des(T)$ and $\ol{i+1}\notin\Des(T)$. So $\ol{i+1}$ is in a row strictly lower than $\ol{i}$ while $\ol{i+2}$ is in a row weakly higher than $\ol{i+1}$.  We consider two cases.

If $\ol{i+2}$ is in a row strictly lower than $\ol{i}$ in $T$, then exchanging $\ol{i}$ and $\ol{i+1}$ is the desired Knuth move of type $\ol{i}$.  In this case, exchanging $\ol{i+1}$ and $\ol{i+2}$, even if it is a Knuth move, does not remove $\ol{i}$ from the descent set and hence is not a Knuth move of type $\ol{i}$.

If, on the other hand, $\ol{i+2}$ is weakly above $\ol{i}$ in $T$, then exchanging $\ol{i+1}$ and $\ol{i+2}$ is the desired Knuth move, while exchanging $\ol{i}$ and $\ol{i+1}$ does not add $\ol{i+1}$ to the descent set.
\end{proof}

Next we describe how Knuth moves interact with the map $r$.

\begin{prop}
\label{prop:rotation preserves knuth moves}
Suppose that $w$ and $w'$ differ by a Knuth move of type $\ol{i}$.  Then $r(w)$ and $r(w')$ differ by a Knuth move of type $\ol{n - i - 1}$.
\end{prop}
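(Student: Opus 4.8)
The plan is to reduce everything to the behavior of descent sets under $r$, since both the existence and the type of a Knuth move are defined purely in terms of descents, and Proposition~\ref{rotation-descent-set} already records exactly how $r$ acts on them (namely $\Des(w)\mapsto n-\Des(w)$). The proof then splits into two parts: first check that $r(w)$ and $r(w')$ are genuinely joined by a Knuth move, and then read off its type from the descent transformation.

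For the first part, I would use the reverse-complement description of $r$, equivalently conjugation by $c^{(n)}$, to track how $r$ transports the swap underlying the original Knuth move. If $w'$ is obtained from $w$ by exchanging the values in positions $i$ and $i+1$ (and their periodic translates), then writing $r = c^{(n)}(-)c^{(n)}$ and pushing the transposition through the conjugation shows that $r(w')$ is obtained from $r(w)$ by exchanging the values in positions $n-i$ and $n-i+1$. Concretely this is the window computation $r(w)(k) = n+1-w(n+1-k)$, which is altered exactly at $k\equiv n-i,\ n-i+1 \pmod n$. Hence $r(w)$ and $r(w')$ differ by an adjacent transposition. To upgrade this to a Knuth move I invoke that $\Des(w)$ and $\Des(w')$ are incomparable (as $w,w'$ differ by a Knuth move) together with the fact that the bijection $\ol{j}\mapsto\ol{n-j}$ of $[\ol n]$ induces a map $I\mapsto n-I$ on subsets satisfying $n-A\subseteq n-B \iff A\subseteq B$, so it carries incomparable sets to incomparable sets. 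By Proposition~\ref{rotation-descent-set} this makes $\Des(r(w))=n-\Des(w)$ and $\Des(r(w'))=n-\Des(w')$ incomparable, so the adjacent transposition is indeed a Knuth move.

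For the second part I read the type directly off the descent transformation. By definition the original move has type $\ol{i}$ precisely when $\ol{i}$ and $\ol{i+1}$ are split between $\Des(w)$ and $\Des(w')$. Applying $I\mapsto n-I$ sends the pair $\{\ol{i},\ol{i+1}\}$ to $\{\ol{n-i},\ol{n-i-1}\}$, which is therefore split between $\Des(r(w))$ and $\Des(r(w'))$. Since $\ol{n-i-1}$ and $\ol{(n-i-1)+1}=\ol{n-i}$ are cyclically adjacent, this is exactly the statement that the rotated move has type $\ol{n-i-1}$, as desired.

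The main obstacle---really the only step that is not bookkeeping---is the first-part computation identifying the transposed positions of $r(w),r(w')$ as $n-i,\ n-i+1$; everything afterward is a transcription through Proposition~\ref{rotation-descent-set}. One point to keep straight is that a Knuth move may carry two types, but this causes no trouble: the type is a property of which cyclically adjacent pair $\{\ol{j},\ol{j+1}\}$ is split, and since $\ol{j}\mapsto\ol{n-j}$ is a bijection, each type $\ol{j}$ of the original move corresponds to exactly one type $\ol{n-j-1}$ of the rotated move, consistently in both the one-type and two-type cases.
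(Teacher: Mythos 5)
Your proof is correct and takes essentially the same route as the paper, whose entire proof is the one-line observation that the claim follows from the definition of type together with Proposition~\ref{rotation-descent-set}. The only material you add is the explicit (and worthwhile) verification that $r$ carries the underlying adjacent transposition at positions $i,i+1$ to one at positions $n-i,n-i+1$, a step the paper leaves implicit.
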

\begin{proof}
This follows immediately from the definition of type and Proposition~\ref{rotation-descent-set}.
\end{proof}

For $\ol{k} \in [\overline{n}]$, there is a unique tabloid of shape $\lambda$ having singleton descent set~$\{\ol{k}\}$: its first row is filled with the residue classes $\ol{k}, \ol{k - 1}, \ldots, \ol{k - \lambda_1 + 1}$, its second row is filled with $\ol{k - \lambda_1}, \ol{k - \lambda_1 - 1}, \ldots, \ol{k - \lambda_1 - \lambda_2 + 1}$, and so on. We call such tabloids \emph{reverse row superstandard} and we denote the one with descent set $\{\ol{k} \}$ by $\RRSS(\lambda, \ol{k})$.

\begin{prop}
\label{prop: evac on RRSS}
If $w \ambcs (P, \RRSS(\lambda, \ol{k}), \rho)$ then $r(w) \ambcs (P', \RRSS(\lambda, \ol{n - k}), \rho')$ for some tabloid $P'$ and some weight $\rho'$.	
\end{prop}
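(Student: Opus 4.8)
The plan is to deduce the claim entirely from the way AMBC and the rotation $r$ interact with descent sets, together with the fact that a reverse row superstandard tabloid is the \emph{unique} tabloid of its shape having a prescribed singleton descent set.

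First I would record the descent set of $w$. By construction $\RRSS(\lambda, \ol{k})$ has descent set $\{\ol{k}\}$, so Proposition~\ref{prop 3.6} gives $\Des(w) = \Des(\RRSS(\lambda, \ol{k})) = \{\ol{k}\}$. Next, Proposition~\ref{rotation-descent-set} tells us that $r$ sends the descent set $\{\ol{k}\}$ to $n - \{\ol{k}\} = \{\ol{n-k}\}$, so $\Des(r(w)) = \{\ol{n-k}\}$.

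The next step is to apply AMBC to $r(w)$. By Proposition~\ref{rotation-shape}, $r(w)$ again has shape $\lambda$, so $r(w) \ambcs (P', Q', \rho')$ for some tabloids $P', Q'$ of shape $\lambda$ and some weight $\rho'$. Applying Proposition~\ref{prop 3.6} once more yields $\Des(Q') = \Des(r(w)) = \{\ol{n-k}\}$. Now $Q'$ is a tabloid of shape $\lambda$ with singleton descent set $\{\ol{n-k}\}$, and by the uniqueness noted in the paragraph defining the reverse row superstandard tabloids there is exactly one such tabloid, namely $\RRSS(\lambda, \ol{n-k})$. Hence $Q' = \RRSS(\lambda, \ol{n-k})$, which is precisely the desired conclusion; the tabloid $P'$ and the weight $\rho'$ are simply whatever AMBC produces and need not be identified further.

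There is no serious obstacle here: the argument is a direct chaining of the descent-compatibility of AMBC (Proposition~\ref{prop 3.6}) with the descent-reversing behavior of $r$ (Proposition~\ref{rotation-descent-set}). The only genuine content is the observation that a singleton descent set pins down the recording tabloid, which is exactly the uniqueness property built into the definition of $\RRSS(\lambda, \ol{k})$. If anything requires care, it is merely confirming that the residue arithmetic $n - \{\ol{k}\} = \{\ol{n-k}\}$ is interpreted correctly modulo $n$, and that Proposition~\ref{prop 3.6} is being applied to $r(w)$ rather than to $w$.
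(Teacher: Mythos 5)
Your proof is correct and follows exactly the paper's argument: apply Proposition~\ref{prop 3.6} to get $\Des(w) = \{\ol{k}\}$, use Proposition~\ref{rotation-descent-set} to get $\Des(r(w)) = \{\ol{n-k}\}$, apply Proposition~\ref{prop 3.6} again to transfer this to the recording tabloid, invoke Proposition~\ref{rotation-shape} for the shape, and conclude by the uniqueness of the tabloid of shape $\lambda$ with singleton descent set $\{\ol{n-k}\}$. No differences worth noting.
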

\begin{proof}
Suppose $w \ambcs (P, \RRSS(\lambda, \ol{k}), \rho)$. By Proposition~\ref{prop 3.6}, we have that $\Des(w) = \{\ol{k}\}$. It follows by Proposition~\ref{rotation-descent-set} that $\Des(r(w)) = \{\ol{n - k}\}$, and so (again by Proposition~\ref{prop 3.6}) that $\Des(Q(r(w))) = \{\ol{n - k}\}$.  By Proposition~\ref{rotation-shape}, $Q(r(w))$ is of shape $\lambda$.  Thus $Q(r(w))$ is equal to $\RRSS(\lambda, \ol{n - k})$, the unique tabloid of shape $\lambda$ having descent set $\{\ol{n - k}\}$.
\end{proof}

We now come to the main result of this section.

\begin{theorem}
\label{thm: def1 for evacuation}
There exists a unique automorphism $e$ of the graph $\AAA_\lambda$ such that
\begin{itemize}
\item $e\left(\RRSS(\lambda, \ol{k})\right) = \RRSS(\lambda, \ol{n - k})$ for every $\ol{k} \in [\ol{n}]$, and
\item if the edge $\{T_1, T_2\}$ is a Knuth move of type $\ol{k}$ then the edge $\{e(T_1), e(T_2)\}$ is a Knuth move of type $\ol{n - k - 1}$.
\end{itemize}
Moreover, $e$ is an involution on the set of tabloids of shape $\lambda$, and if $w \ambcs (P, Q, \rho)$ then $r(w) \ambcs (e(P), e(Q), \rho')$ for some weight $\rho'$.
\end{theorem}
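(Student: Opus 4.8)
The plan is to build $e$ as an automorphism of $\AAA_\lambda$ by propagation from the reverse row superstandard tabloids, and then to identify it with the rotation $r$ through AMBC. I would first record the fact that every connected component of $\AAA_\lambda$ contains a tabloid $\RRSS(\lambda,\ol k)$ (indeed, these dual equivalence graphs are connected), so that the seeds $\RRSS(\lambda,\ol k)$ reach every tabloid by Knuth moves. Granting this, uniqueness is immediate: the first bullet fixes $e$ on each seed, and Proposition~\ref{prop:exists unique Knuth move} shows that each successive Knuth move determines the next value. If $\{T_1,T_2\}$ has type $\ol k$ and $e(T_1)$ is known, then $e(T_2)$ must be the unique tabloid joined to $e(T_1)$ by a Knuth move of type $\ol{n-k-1}$; one checks that the descent hypothesis of Proposition~\ref{prop:exists unique Knuth move} (exactly one of $\ol{n-k-1},\ol{n-k}$ lies in $\Des(e(T_1))$) holds because the analogous statement holds for $\{T_1,T_2\}$ and type $\ol k$. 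The same argument gives a general principle I use later: an automorphism of $\AAA_\lambda$ fixing every seed and preserving Knuth types is the identity.

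For existence together with the AMBC statement, I would define $e$ from the rotation directly: given a tabloid $Q$ of shape $\lambda$, choose any $w$ with $Q(w)=Q$ and set $e(Q):=Q(r(w))$. By Propositions~\ref{rotation-shape}, \ref{rotation-descent-set}, \ref{prop:rotation preserves knuth moves}, and~\ref{prop: evac on RRSS}, this candidate has the right formal features: it preserves shape, it sends $\RRSS(\lambda,\ol k)$ to $\RRSS(\lambda,\ol{n-k})$, and, since a Knuth move of $w$ of type $\ol k$ projects (Propositions~\ref{prop 3.6} and~\ref{covering theorem}) to a Knuth move of $Q(w)$ of type $\ol k$ while $r$ flips $\ol k\mapsto\ol{n-k-1}$, it carries a type-$\ol k$ edge of $\AAA_\lambda$ to a type-$\ol{n-k-1}$ edge. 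The entire content is thus to show that $e(Q)$ is well defined, i.e.\ that $Q(r(w))$ depends only on $Q(w)$.

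This well-definedness is the main obstacle, and it is equivalent to path-independence of the type-flip propagation. To handle it I would work one connected component $\tw C$ of $\tw G_n$ at a time. On $\tw C$ the insertion tabloid is constant (Proposition~\ref{covering theorem}), and $w\mapsto Q(w)$ is a graph covering onto a component $A$ of $\AAA_\lambda$ whose fibers over a fixed tabloid consist of permutations differing only in their weight vectors. Two elements of $\tw C$ with equal recording tabloid are joined by the lift of a closed Knuth walk in $A$; applying the graph automorphism $r$ and projecting back by $Q$ yields a Knuth walk whose type sequence is the $\ol k\mapsto\ol{n-k-1}$ flip of the original. Using Proposition~\ref{prop: evac on RRSS} to anchor the value at a seed of $A$ and Proposition~\ref{prop:exists unique Knuth move} to propagate, I would conclude that the two endpoints have the same recording tabloid, so $w\mapsto Q(r(w))$ is constant on the fibers over $\tw C$ and descends to $e_{\tw C}\colon A\to\AAA_\lambda$. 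Each $e_{\tw C}$ satisfies the two bullet conditions, so by the uniqueness principle all the $e_{\tw C}$ over a given $A$ coincide; hence $e(Q)=Q(r(w))$ is independent of $w$, which builds $e$ and simultaneously proves the recording half $Q(r(w))=e(Q(w))$. I expect the genuinely hard step to be the assertion that the $r$-image of a closed Knuth loop closes up again after projection, since this is exactly the point where one must invoke the detailed compatibility of AMBC, the rotation, and the weight vectors rather than formal covering-space generalities.

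The remaining claims follow formally. The map $e$ is an involution because $e^2$ fixes every seed ($\ol k\mapsto\ol{n-k}\mapsto\ol k$) and preserves Knuth types ($\ol k\mapsto\ol{n-k-1}\mapsto\ol k$), hence is the identity by the uniqueness principle. For the insertion tabloid I would apply the recording statement to $w^{-1}$: using the standard symmetry of AMBC under inversion (which interchanges $P$ and $Q$) together with $r(w^{-1})=r(w)^{-1}$ (as $r$ is a group automorphism), we obtain $P(r(w))=Q(r(w^{-1}))=e(Q(w^{-1}))=e(P(w))$. Therefore $r(w)\ambcs(e(P),e(Q),\rho')$ for some weight $\rho'$, completing the proof.
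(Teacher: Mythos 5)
Your overall strategy --- anchor the map at the reverse row superstandard tabloids and propagate along Knuth moves using Propositions~\ref{prop: evac on RRSS}, \ref{covering theorem}, \ref{prop 3.6}, \ref{prop:rotation preserves knuth moves}, and~\ref{prop:exists unique Knuth move} --- is the same as the paper's, and your uniqueness, involution, and $P$-tabloid arguments are essentially identical to the paper's. But two things go wrong. First, your parenthetical claim that the graphs $\AAA_\lambda$ are connected is false: the paper's own example in \S\ref{sec:open problems} exhibits $\AAA_{\langle 2,2\rangle}$ as a disjoint union of two paths. The statement you actually need --- that every component of $\AAA_\lambda$ contains some $\RRSS(\lambda,\ol k)$ --- is true, but it is a cited input (\cite[Lem.~7.4]{CLP}, which the paper invokes to make its induction well-founded), not a consequence of connectivity; as written, your propagation has no valid justification for reaching every tabloid. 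Second, and more seriously, the well-definedness of $e(Q):=Q(r(w))$ --- which you correctly identify as ``the entire content'' --- is never actually established: you sketch an anchoring idea and then disclaim it, saying the closed-loop step is genuinely hard and would require detailed compatibility of AMBC, rotation, and weights. As submitted, the central step of the existence proof is a stated expectation, not an argument.

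For comparison, the paper sidesteps your monodromy question entirely. It defines $e_{P,\rho}(T) := Q(r(\Psi(P,T,\rho)))$, exploiting that $\Psi$ is defined on \emph{all} triples, and proves by induction on the distance from $T$ to a seed that this value is independent of $(P,\rho)$: the base case is Proposition~\ref{prop: evac on RRSS}, the inductive step uses the final clause of Proposition~\ref{covering theorem} (the $\Psi$-lifting of a Knuth move) together with Propositions~\ref{prop 3.6} and~\ref{prop:rotation preserves knuth moves}, and Proposition~\ref{prop:exists unique Knuth move} pins the new value down as the unique flipped-type neighbor of an already-determined one. No loop ever has to close. It is also worth noting that your flagged ``hard step'' is in fact fillable with tools you already have, contrary to your worry: given $w,w'$ in one component of $\tw{G}_n$ with $Q(w)=Q(w')=T$, lift one and the same path from $T$ to a seed $\RRSS(\lambda,\ol k)$ starting at both $w$ and $w'$ (unique path lifting in the covering of Proposition~\ref{covering theorem}); both lifts end over the seed, so by Proposition~\ref{prop: evac on RRSS} both $r$-images record $\RRSS(\lambda,\ol{n-k})$; the two projected $r$-image paths carry identical type sequences (Propositions~\ref{prop 3.6} and~\ref{prop:rotation preserves knuth moves}), so running Proposition~\ref{prop:exists unique Knuth move} backward from $\RRSS(\lambda,\ol{n-k})$ forces $Q(r(w))=Q(r(w'))$. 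With that lemma in place, your cross-component reduction via the uniqueness principle would complete the proof; without it, there is a genuine gap exactly where the theorem's content lies.
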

\begin{proof}
For a tabloid $P$ of shape $\lambda$ and a vector $\rho\in\ZZ^{\ell(\lambda)}$, define a map $e_{P,\rho}: \AAA_\lambda\to \AAA_\lambda$ as follows. Given a tabloid $T$, let $w_{P,\rho} = \Psi(P, T, \rho)$ (where $\Psi$ is the inverse map of AMBC, mentioned above in \S\ref{sec:AMBC}); define $e_{P,\rho}(T) = Q(r(w_{P,\rho}))$. We will show that $e_{P,\rho}$ is actually independent of $P$ and $\rho$, and hence will be able to drop them from the notation.

We proceed by induction on the distance (in $\AAA_\lambda$) from $T$ to a reverse row superstandard tabloid (which is finite by \cite[Lem.~7.4]{CLP}). If $T = \RRSS(\lambda, \ol{k})$, then by Proposition \ref{prop: evac on RRSS} we have $e_{P,\rho}(T) = \RRSS(\lambda, \ol{n-k})$, and this manifestly does not depend on $P$ or $\rho$. For the inductive step, suppose $T$ is connected by a Knuth move of type $\ol{i}$ to a tabloid $T'$ that satisfies the condition that $e_{P,\rho}(T')$ is independent of $P$ and $\rho$. By Proposition~\ref{covering theorem}, $w:=\Psi(P,T,\rho)$ is connected by a Knuth move 
to $w'=\Psi(P, T', \rho')$ for some $\rho'$, and by Proposition~\ref{prop 3.6}
this move is of type $\ol{i}$. By Proposition~\ref{prop:rotation preserves knuth moves}, $r(w)$ is connected to $r(w')$ by a Knuth move of type $\ol{n-i-1}$. Again by Propositions~\ref{covering theorem} and~\ref{prop 3.6}, $e_{P,\rho'}(T')$ and $e_{P,\rho}(T)$ are connected by a Knuth move of type $\ol{n-i-1}$. But $e_{P,\rho'}(T')$ does not depend on $P$ or $\rho'$, and $e_{P,\rho}(T)$ is the unique (by Proposition~\ref{prop:exists unique Knuth move}) tabloid connected to $e_{P,\rho'}(T')$ by a Knuth move of type $\ol{n-i-1}$; thus it also does not depend on $P$ or $\rho$. By induction, $e_{P,\rho}$ indeed does not depend on $P$ or $\rho$, so from now on we drop them from the notation. 

The same argument shows that any automorphism $e'$ of $\AAA_\lambda$ satisfying the conditions of the theorem is, in fact, equal to $e$: the two automorphisms agree on the reverse row superstandard tabloids by hypothesis, and the inductive argument shows that they also agree on every other tabloid.

By the construction of $e$, we have for any $w$ that $Q(r(w)) = e(Q(w))$. Thus, since $r$ is an involution, $e$ must also be an involution.  By taking inverses (using \cite[Prop.~3.1]{CLP} and the fact that $r(w^{-1}) = r(w)^{-1}$), it follows that $P(r(w)) = e(P(w))$. This finishes the proof.
\end{proof}

\subsection{Computing evacuation of rectangles and standard tableaux}
\label{sec:special cases}

In this section, we use the characterization in Theorem~\ref{thm: def1 for evacuation} to show that the affine evacuation map $e$ has nice behaviors in two cases: when the tabloid $T$ is essentially a standard Young tableau, and when the tabloid $T$ is of rectangular shape. 

Given a standard Young tableau $T$, one may naturally associate to it a tabloid $\tw{T}$: if the entries of row $i$ of $T$ are $a_1, \ldots, a_k$, then the entries of row $i$ of $\tw{T}$ are $\ol{a_1}, \ldots, \ol{a_k}$.  If a tabloid $\tw{T}$ is the associated tabloid of a tableau $T$, we say that $\tw{T}$ is \emph{standardizable}, and that $T$ is its \emph{standardization}.
\begin{prop}
If $T$ is a standard Young tableau and $\widetilde{T}$ is the associated tabloid, then $e(\widetilde{T})$ is a standardizable tabloid whose associated tableau is the evacuation of $T$.
\end{prop}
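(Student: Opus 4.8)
Throughout, write $\varepsilon$ for the (finite) evacuation map on standard Young tableaux recalled in \S\ref{sec:finite}, reserving $e$ for the affine map of Theorem~\ref{thm: def1 for evacuation}; the claim is that $e(\widetilde{T}) = \widetilde{\varepsilon(T)}$ for every standard Young tableau $T$ of shape $\lambda$. The plan is to identify the standardizable tabloids with the finite dual equivalence graph and then propagate the identity along Knuth moves, reducing everything to a single base point.

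\emph{Locating the standardizable tabloids.} First I would pin down how standardizable tabloids sit inside $\AAA_\lambda$. Since $1$ occupies the top-left corner of any standard Young tableau, the residue $\ol 1$ always lies in the first row of a standardizable tabloid, so $\ol 0$ is never a descent; hence the descents of $\widetilde{T}$ inside $\{\ol 1,\dots,\ol{n-1}\}$ coincide with the descents of $T$. A Knuth move joining two standardizable tabloids can then only have types in $\{\ol 1,\dots,\ol{n-2}\}$: a move of type $\ol 0$ or $\ol{n-1}$ would force $\ol 0$ into the descent set of one endpoint. For such a type $\ol i$ the residues $\ol i,\ol{i+1},\ol{i+2}$ are represented by $i,i+1,i+2\in[n]$ with no wrap-around, so the tabloid move agrees verbatim with a finite Knuth move on the standardizations. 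This yields a type-preserving isomorphism between the induced subgraph of $\AAA_\lambda$ on standardizable tabloids and $\DDD_\lambda$, under which a finite Knuth move toggling the descents at adjacent positions $i,i+1$ matches a tabloid move of type $\ol i$. As $\DDD_\lambda$ is connected, so is this subgraph.

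\emph{Propagation.} The engine of the proof is an induction along $\DDD_\lambda$: if $e(\widetilde{T'}) = \widetilde{\varepsilon(T')}$ and $T$ is obtained from $T'$ by a finite Knuth move of type $\ol i$, then $e(\widetilde{T}) = \widetilde{\varepsilon(T)}$. That move lifts to a type-$\ol i$ tabloid move $\{\widetilde{T},\widetilde{T'}\}$, so by Theorem~\ref{thm: def1 for evacuation} the tabloids $e(\widetilde{T})$ and $e(\widetilde{T'})$ are joined by a move of type $\ol{n-i-1}$. On the finite side, $\varepsilon$ sends descents $i\mapsto n-i$, so it carries the type-$\ol i$ move $\{T,T'\}$ to a type-$\ol{n-i-1}$ move $\{\varepsilon(T),\varepsilon(T')\}$, which lifts (again without wrap-around, since $n-i-1\le n-2$) to a type-$\ol{n-i-1}$ tabloid move $\{\widetilde{\varepsilon(T)},\widetilde{\varepsilon(T')}\}$. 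Using the hypothesis $e(\widetilde{T'}) = \widetilde{\varepsilon(T')}$, both $e(\widetilde{T})$ and $\widetilde{\varepsilon(T)}$ are joined to this common tabloid by a move of type $\ol{n-i-1}$, so they agree by the uniqueness in Proposition~\ref{prop:exists unique Knuth move}. In particular $e(\widetilde{T})$ is standardizable, so standardizability is obtained automatically rather than by a separate argument.

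\emph{The base case, and the main obstacle.} To start the induction I need $e(\widetilde{T_0}) = \widetilde{\varepsilon(T_0)}$ at one $T_0$ in the (single) connected component. When $\lambda$ has at most two rows there is a standard Young tableau $T_0$ with singleton descent set $\{k\}$; then $\widetilde{T_0} = \RRSS(\lambda,\ol k)$, and the first clause of Theorem~\ref{thm: def1 for evacuation} together with the descent bookkeeping above gives $e(\widetilde{T_0}) = \RRSS(\lambda,\ol{n-k}) = \widetilde{\varepsilon(T_0)}$ at once. The real difficulty is that when $\ell(\lambda)\ge 3$ \emph{no} standard Young tableau has a singleton descent set — equivalently, no standardizable tabloid is reverse row superstandard — so the nearest reverse row superstandard tabloid to $\widetilde{T_0}$ lies outside the standardizable subgraph, and the very path used to define $e$ leaves $S_\lambda$; thus the base case cannot be read off from Theorem~\ref{thm: def1 for evacuation} directly. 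I expect this to be the crux. I would try to settle it by choosing $T_0$ so that $\widetilde{T_0}$ is the unique tabloid of shape $\lambda$ with its descent set, forcing both $e(\widetilde{T_0})$ and $\widetilde{\varepsilon(T_0)}$ to be the unique tabloid with the reversed descent set (this succeeds for rectangular $\lambda$ via the column-superstandard filling, where each maximal increasing run of rows is forced to be $1<2<\cdots<\ell(\lambda)$); for general shapes, where the descent set no longer determines the tabloid, the base point would instead have to be verified by a direct AMBC computation — e.g.\ by exhibiting an affine permutation $w$ with standardizable $Q(w)=\widetilde{T_0}$ whose rotation $r(w)$ can be analyzed explicitly.
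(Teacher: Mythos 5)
Your propagation machinery is sound: descents of standardizable tabloids never include $\ol{0}$, finite Knuth types lie in $\{\ol{1},\dots,\ol{n-2}\}$ and are sent by evacuation to types in the same range, so Proposition~\ref{prop:exists unique Knuth move} applies and the inductive step goes through. But the base case is a genuine gap, and you have correctly identified it as such without closing it. For $\ell(\lambda)\ge 3$ no standardizable tabloid is reverse row superstandard, so the anchors of Theorem~\ref{thm: def1 for evacuation} are unavailable; and your fallback of choosing $T_0$ so that $\widetilde{T_0}$ is the unique tabloid with its descent set works only for special shapes. For a general shape it fails outright: already for $\lambda = \langle 3,2,1\rangle$ the two standard Young tableaux with rows $(1,2,6\,/\,3,4\,/\,5)$ and $(1,2,4\,/\,3,6\,/\,5)$ both have descent set $\{2,4\}$, so no descent-set argument can pin down $e(\widetilde{T_0})$. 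Since every tool you allow yourself (RRSS anchors, types of Knuth moves, uniqueness of moves) is descent-theoretic, the obstruction is structural, not a matter of choosing a cleverer $T_0$.

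The ``direct AMBC computation'' you defer to is exactly the missing ingredient, and it is what the paper uses --- but it renders the entire induction unnecessary. By \cite[Thm.~10.2]{CPY}, if a finite permutation $w \in \Symm_n$ has RS pair $(P,Q)$, then viewing $w$ inside $\affS_n$ its AMBC image is $(\widetilde{P}, \widetilde{Q}, 0)$; moreover for such $w$ one checks directly that $r(w) = w_0 w w_0$. Taking any $w$ with insertion tableau $P(w)=T$, Theorem~\ref{thm: def1 for evacuation} gives $r(w) \ambcs (e(\widetilde{T}), e(\widetilde{Q}), \rho')$, while applying \cite[Thm.~10.2]{CPY} to $w_0 w w_0$ gives $r(w) \ambcs (\widetilde{\varepsilon(T)}, \widetilde{\varepsilon(Q)}, 0)$; comparing the first coordinates proves the proposition for every $T$ at once, with no graph-theoretic propagation. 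So your plan is not wrong, but it reduces the proposition to a statement (the base point identity) whose only available proof already proves the general case; to complete your write-up you would have to import the RS/AMBC compatibility anyway, at which point the induction should be discarded.
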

This result validates our repetition of terminology and notation in the affine and finite cases.
\begin{proof}
By \cite[Thm.~10.2]{CPY} we have the following result: if $w \in \Symm_n$ corresponds to the pair $(P, Q)$ of standard Young tableaux under RS, then (viewing $w$ as an element of $\affS_n$ in the natural way) it corresponds to the triple $(\tw{P}, \tw{Q}, 0)$ under AMBC.  Moreover, it is easy to see that in this case $w_0 w w_0$ and $r(w)$ coincide as elements of $\affS_n$.  The result follows immediately by taking $w$ to be any permutation with $P(w) = T$.
\end{proof}

In the finite case, it is typically not possible to compute the evacuation of a tableau just ``by eye''.  One important exception is the case of tableaux whose shape is a rectangle, in which case one simply rotates the tableau by 180 degrees, and replaces each entry $i$ with $n+1-i$, where $n$ is the size of the rectangle.\footnote{The origin of this observation is murky: according to Stanley \cite[p.~12]{StanleyEvac}, it follows easily from Sch\"utzenberger's initial study \cite{Schutzenberger} of evacuation, but it is not clear where it is first written down.}  The corresponding result for tabloids of rectangular shape is straightforward.
\begin{prop}
\label{rectangles}
Suppose that $T$ is a tabloid of shape $\langle a^b \rangle$.  
For $i = 1, \ldots, b$, denote the entries in row $i$ of $T$ by $\{\ol{T_{i, 1}}, \ldots, \ol{T_{i, a}}\}$.  Then $e(T)$ is the tabloid whose $i$th row contains entries $\{\ol{ab + 1 - T_{b + 1 - i, 1}}, \ldots, \ol{ab + 1 - T_{b + 1 - i, a}}\}$ for all $i$. That is, $e(T)$ is obtained from $T$ by turning it upside down and ``reflecting'' the entries according to the rule $\ol{j}\mapsto\ol{ab+1-j}$. 
\end{prop}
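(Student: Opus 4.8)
The plan is to write down the claimed formula as an explicit map $\phi$ on tabloids of shape $\langle a^b\rangle$ and then verify that $\phi$ satisfies the two properties that characterize the evacuation map $e$ in Theorem~\ref{thm: def1 for evacuation}; by the uniqueness asserted there, this forces $\phi = e$. Concretely, write $n = ab = |\lambda|$ and let $\phi(T)$ be the tabloid whose $i$th row is obtained from the $(b+1-i)$th row of $T$ by applying $\ol j\mapsto\ol{n+1-j}$ to each entry. Since $\phi$ is defined purely in terms of the sets of entries in each row, it is well defined on tabloids. The first thing to record is that rectangularity is exactly what makes $\phi$ a self-map of $\AAA_\lambda$: every row of $\langle a^b\rangle$ has the same length $a$, so reversing the row order again produces a tabloid of shape $\langle a^b\rangle$, whereas for a non-rectangular shape turning the diagram upside down would not preserve the shape. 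Note too that $\phi$ is an involution, since reversing rows twice and applying $\ol j\mapsto\ol{n+1-j}$ twice are both the identity; in particular $\phi$ is a bijection.

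The key step is to show that $\phi$ reflects descent sets:
\begin{equation}
\label{eq:phi-descents}
\Des(\phi(T)) = \{\ol{n-i} : \ol i\in\Des(T)\}.
\end{equation}
To prove this, note from the definition of $\phi$ that $\ol v$ occupies row $b+1-p$ of $\phi(T)$ precisely when $\ol{n+1-v}$ occupies row $p$ of $T$; writing $\mathrm{row}_S(\ol x)$ for the row index of $\ol x$ in a tabloid $S$, this says $\mathrm{row}_{\phi(T)}(\ol v) = b+1-\mathrm{row}_T(\ol{n+1-v})$. Hence $\ol m\in\Des(\phi(T))$, i.e.\ $\mathrm{row}_{\phi(T)}(\ol{m+1})>\mathrm{row}_{\phi(T)}(\ol m)$, unwinds to $\mathrm{row}_T(\ol{n-m})<\mathrm{row}_T(\ol{n+1-m})$; since $\ol{n+1-m}=\ol{(n-m)+1}$, this is exactly the statement $\ol{n-m}\in\Des(T)$, which is \eqref{eq:phi-descents}.

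Both defining properties of $e$ now follow from \eqref{eq:phi-descents}. For the first bullet of Theorem~\ref{thm: def1 for evacuation}: $\phi$ preserves the shape $\lambda$ and, by \eqref{eq:phi-descents}, sends the singleton descent set $\{\ol k\}$ to $\{\ol{n-k}\}$; as $\RRSS(\lambda,\ol k)$ is the unique tabloid of shape $\lambda$ with descent set $\{\ol k\}$, we get $\phi(\RRSS(\lambda,\ol k)) = \RRSS(\lambda,\ol{n-k})$. Next, $\phi$ is an automorphism of $\AAA_\lambda$: it is a bijection, it carries a switch of two consecutive values to a switch of two consecutive values, and since $\ol i\mapsto\ol{n-i}$ is a bijection of $[\ol n]$, identity \eqref{eq:phi-descents} shows $\phi$ preserves incomparability of descent sets, so Knuth moves map to Knuth moves. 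Finally, for the second bullet, if $\{T_1,T_2\}$ is a Knuth move of type $\ol k$ then $\ol k$ lies in one of $\Des(T_1),\Des(T_2)$ and $\ol{k+1}$ in the other; applying \eqref{eq:phi-descents}, $\ol{n-k}$ lies in one of $\Des(\phi(T_1)),\Des(\phi(T_2))$ and $\ol{n-(k+1)}=\ol{n-k-1}$ in the other, so the move $\{\phi(T_1),\phi(T_2)\}$ has type $\ol{n-k-1}$. Thus $\phi$ meets both conditions of Theorem~\ref{thm: def1 for evacuation}, and uniqueness gives $\phi=e$.

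The individual steps are routine, so the one point deserving care is the interplay of two slightly different shifts. The value-reflection built into $\phi$ is $\ol j\mapsto\ol{n+1-j}$, but the induced map on descent sets in \eqref{eq:phi-descents} is $\ol i\mapsto\ol{n-i}$; this off-by-one is precisely what converts a Knuth move of type $\ol k$ into one of type $\ol{n-k-1}$ (rather than $\ol{n-k}$), matching the second bullet of Theorem~\ref{thm: def1 for evacuation}. Correctly composing the row-reversal with the value-reflection in the derivation of \eqref{eq:phi-descents} is therefore where the bookkeeping must be handled carefully.
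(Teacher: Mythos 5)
Your proof is correct and takes essentially the same route as the paper: the paper's own (one-line) proof likewise invokes the uniqueness characterization of Theorem~\ref{thm: def1 for evacuation}, checking the map on reverse row superstandard tabloids and its compatibility with Knuth moves of the appropriate type. Your write-up simply supplies the details (the descent-reflection identity and the off-by-one bookkeeping) that the paper declares ``easy to check.''
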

\begin{proof}
It is easy to check that the result is correct for reverse row superstandard tabloids, and that the operation interacts with Knuth moves in the appropriate way. 
\end{proof}

\subsection{Computing affine evacuation using the combinatorial $R$-matrix}
\label{sec:R-matrix}

In this section, we realize affine evacuation in terms of an algorithm called the \emph{combinatorial $R$-matrix}.  In this context, we think of a tabloid as a tuple of its rows, and the combinatorial $R$-matrix gives a natural way to re-order parts of a tabloid into non-partition shapes.  We begin with a general definition of the combinatorial $R$-matrix, starting in the two-row case. 

A \emph{semistandard Young tableau} of shape $\lambda$ is a filling of the Young diagram of $\lambda$ so that the result is strictly increasing down columns and weakly increasing across rows.  Let $B^k$ denote the set of semistandard Young tableaux of shape $\langle k \rangle$ with entries in $[n]$.
\begin{definition}
\label{defn: comb R}
The \emph{combinatorial $R$-matrix} is the map
\[
R : B^{k_2} \times B^{k_1} \rightarrow B^{k_1} \times B^{k_2}
\]
that sends $(a,b) \mapsto (a',b')$, according to the following algorithm:
\begin{enumerate}
\item Write the multiset of entries in $a$ and $b$ horizontally, in increasing order. Below each entry of $a$ (respectively, $b$), place a right (resp., left) parenthesis. For a given number, the right parentheses should occur before the left parentheses.
\item Say that a left parenthesis and a right parenthesis are a \emph{matched pair} if the right parenthesis occurs after the left parenthesis, and there are no other parentheses between them. Recursively remove matched pairs until none remain.  At this point, there remain $\alpha$ right parentheses followed by $\beta$ left parentheses for some nonnegative integers $\alpha, \beta$.
\item Replace these unmatched symbols with $\beta$ right parentheses, followed by $\alpha$ left parentheses.
\item Add back the removed parentheses, and let $a'$ (respectively, $b'$) be the multiset of numbers above a right (resp., left) parenthesis.
\end{enumerate}
\end{definition}

\begin{example}
Let
\[
a = \tableau[sY]{2,3,4,5,5,5,7} \in B^7, \quad\quad\quad b = \tableau[sY]{1,1,1,2,4,5,5,6,6} \in B^9.
\]
Step 1 of the above algorithm leads to
\[
\begin{array}{cccccccccccccccc}
1 & 1 & 1 & 2 & 2 & 3 & 4 & 4 & 5 & 5 & 5 & 5 & 5 & 6 & 6 & 7 \\
( & ( & ( & ) & ( & ) & ) & ( & ) & ) & ) & ( & ( & ( & ( & )
\end{array}.
\]
After recursively removing matched pairs of parentheses, we are left with
\[
\begin{array}{cccccccccccccccc}
1 & 1 & 1 & 2 & 2 & 3 & 4 & 4 & 5 & 5 & 5 & 5 & 5 & 6 & 6 & 7 \\
&&&&&&&&&& ) & ( & ( & ( &&
\end{array}.
\]
Step 3 changes these unmatched parentheses into
\[
\begin{array}{cccccccccccccccc}
1 & 1 & 1 & 2 & 2 & 3 & 4 & 4 & 5 & 5 & 5 & 5 & 5 & 6 & 6 & 7 \\
&&&&&&&&&& ) & ) & ) & ( &&
\end{array},
\]
so two 5's are ``transferred'' by the combinatorial $R$-matrix to produce the pair $(a', b')$:
\[
a' = \tableau[sY]{2,3,4,5,5,5,5,5,7} \in B^9, \quad\quad\quad b' = \tableau[sY]{1,1,1,2,4,6,6} \in B^7.
\]
\end{example}

\begin{remark}
The combinatorial $R$-matrix of Definition \ref{defn: comb R} is a special case of an isomorphism coming from affine crystal theory. The algorithm for $R$ given above is due to Nakayashiki and Yamada \cite[Rule 3.11]{NakayashikiYamada}.\footnote{Our parenthesis rule is easily seen to be equivalent to the diagrammatic rule in \cite{NakayashikiYamada}; note, however, that we have reversed the order of the factors $a$ and $b$.}
\end{remark}
\begin{remark}
Shimozono showed that the combinatorial $R$-matrix can also be described in terms of the plactic monoid (as originally defined by Lascoux and Sch\"utzenberger \cite{LascouxSchutzenberger}; see also \cite[Ch.~2]{Fulton}).  For semistandard tableaux $a$ and $b$, let $a * b$ denote the product in the plactic monoid; this product can be computed, for example, by jeu de taquin or Schensted insertion. For $(a,b) \in B^{k_2} \times B^{k_1}$, the map $R(a,b) = (a',b')$ is characterized by the property that $(a',b')$ is the unique pair in $B^{k_1} \times B^{k_2}$ such that $a * b = a' * b'$ (see \cite[Ex.~4.10]{ShimozonoDummies}).
\end{remark}

Next, we extend this definition to more complicated shapes.  Let $k_1, \ldots, k_d$ be a sequence of positive integers, and set $B^{k_d, \ldots, k_1} = B^{k_d} \times \cdots \times B^{k_1}$. (Arranging the subscripts in decreasing order turns out to be notationally convenient when using the English notation for Young diagrams.) For $i = 1, \ldots, d-1$, define
\[
R_i : B^{k_d, \ldots, k_{i+1}, k_i, \ldots, k_1} \rightarrow B^{k_d, \ldots, k_i, k_{i+1}, \ldots, k_1}
\]
to be the map that applies the combinatorial $R$-matrix to the factors $B^{k_{i+1}} \times B^{k_i}$, and leaves the other factors alone.  The next result shows that these operations generate an action of the symmetric group.

\begin{proposition}[{\cite[Prop.~4.7 and Thm.~4.8]{ShimozonoDummies}}]
\label{prop:R braid}
The maps $R_i$ satisfy the relations $R_i^2 = \mathrm{id}$, $R_i R_j = R_j R_i$ if $|i-j| > 1$, and $R_i R_{i+1} R_i = R_{i+1} R_i R_{i+1}$.
Thus, for any permutation $\sigma \in \Symm_d$, there is a well-defined map $R_\sigma : B^{k_d, \ldots, k_1} \rightarrow B^{k_{\sigma(d)}, \ldots, k_{\sigma(1)}}$.  Moreover, if $(k_d, \ldots, k_1) = (k_{\sigma(d)}, \ldots, k_{\sigma(1)})$, then $R_\sigma$ is the identity.
\end{proposition}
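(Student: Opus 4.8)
The plan is to derive all three relations, and the final two assertions, from the plactic characterization of $R$ recalled just above: for $(a,b)\in B^{k_2}\times B^{k_1}$, the pair $R(a,b)=(a',b')$ is the \emph{unique} element of $B^{k_1}\times B^{k_2}$ with $a*b=a'*b'$ in the plactic monoid. Two of the relations fall out immediately. For $R_i^2=\mathrm{id}$, note that $R(a,b)=(a',b')\in B^{k_1}\times B^{k_2}$ satisfies $a*b=a'*b'$, so applying $R$ again produces the unique pair in $B^{k_2}\times B^{k_1}$ with this product; since $(a,b)$ is already such a pair, uniqueness forces $R(a',b')=(a,b)$, and hence $R_i^2=\mathrm{id}$ (the remaining factors being untouched). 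For $R_iR_j=R_jR_i$ with $|i-j|>1$, the two maps act on disjoint pairs of tensor factors and so commute tautologically.

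The crux is the braid relation $R_iR_{i+1}R_i=R_{i+1}R_iR_{i+1}$, which is the Yang--Baxter equation for the combinatorial $R$-matrix; since the outer factors are fixed, I would reduce to three factors $B^{k_3}\times B^{k_2}\times B^{k_1}$. Here the purely plactic argument breaks down: both sides preserve the total product $b_3*b_2*b_1$ and land in $B^{k_1}\times B^{k_2}\times B^{k_3}$, but the product together with the three prescribed row-lengths does \emph{not} determine the triple (the number of factorizations equals the number of standard tableaux of the relevant shape, which exceeds one), so the two-factor uniqueness is unavailable. Instead I would use the affine-crystal origin of $R$ noted earlier: each $R_i$ commutes with the crystal raising and lowering operators $e_j,f_j$, so both compositions are crystal isomorphisms $B^{k_3}\otimes B^{k_2}\otimes B^{k_1}\to B^{k_1}\otimes B^{k_2}\otimes B^{k_3}$. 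Two such morphisms that agree on every highest-weight element (those killed by all $e_j$) must agree everywhere, because each element is obtained from its highest-weight element by a sequence of $f_j$'s with which both maps commute. It therefore suffices to evaluate $R_iR_{i+1}R_i$ and $R_{i+1}R_iR_{i+1}$ on the highest-weight tensors, which are constrained by a lattice-word (Yamanouchi) condition and are few enough to be compared by a direct, finite computation. This highest-weight verification is the main obstacle, and is exactly the calculation underlying Shimozono's theorem.

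Granting the three relations, the maps $R_i$ generate an action of $\Symm_d$: by the theorem of Matsumoto and Tits, any two reduced words for $\sigma$ are connected by braid and far-commutation moves, which $R$ respects, while non-reduced words are absorbed by $R_i^2=\mathrm{id}$; hence $\sigma\mapsto R_\sigma$ is well defined and multiplicative. Finally, suppose $(k_d,\ldots,k_1)=(k_{\sigma(d)},\ldots,k_{\sigma(1)})$, so that $\sigma$ lies in the stabilizer of the shape sequence, i.e. the Young subgroup generated by the transpositions $(p\,q)$ with $k_p=k_q$. Since $\sigma\mapsto R_\sigma$ is an action, it is enough to show $R_\tau=\mathrm{id}$ for each such $\tau$. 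When $p,q$ are adjacent this is the equal-shape case: for $(a,b)\in B^{k}\times B^{k}$ the pair $(a,b)$ itself witnesses uniqueness, so $R(a,b)=(a,b)$. For non-adjacent $p<q$ with $k_p=k_q=k$, I would write $\tau$ via the reduced word $s_{q-1}\cdots s_{p+1}s_p s_{p+1}\cdots s_{q-1}$ and compute: the central move $R_{s_p}$ is an equal-shape swap and hence trivial, letting the moving shape-$k$ factor pass through unchanged, and then repeated application of the two-factor uniqueness to the plactic products straddling the centre peels off the intervening factors one at a time, showing each is returned to its original state. Thus $R_\tau=\mathrm{id}$, and $R_\sigma=\mathrm{id}$ for every $\sigma$ in the stabilizer, completing the proof.
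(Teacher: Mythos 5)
First, a framing remark: the paper does not prove this proposition at all --- it is quoted directly from Shimozono \cite{ShimozonoDummies} --- so your attempt can only be compared with that source rather than with an argument in the text. Parts of your proposal are sound: the derivation of $R_i^2 = \mathrm{id}$ from the plactic uniqueness of the two-factor factorization is correct, the far commutation is indeed tautological, and your closing argument for the stabilizer claim (write a shape-preserving transposition $(p\,q)$ with $k_p = k_q$ as the palindromic reduced word $s_{q-1}\cdots s_{p+1}s_p s_{p+1}\cdots s_{q-1}$, note that the central swap acts on two equal-shape factors and is therefore the identity by two-factor uniqueness, then cancel the flanking maps in pairs) does work, though the ``peeling'' at the end is really the involution property $R_i^2 = \mathrm{id}$ rather than uniqueness, and the number of factorizations of a plactic class into rows of prescribed lengths is a Kostka number (a count of semistandard, not standard, tableaux).

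The genuine gap is the braid relation, which is the entire substance of the statement, and your proposal does not prove it. The reduction you describe --- view both compositions as morphisms of type $A$ crystals and compare them on highest-weight elements --- is a legitimate strategy, but it rests on the unproven assertion that the parenthesis rule of Definition~\ref{defn: comb R} commutes with the crystal operators, and, more importantly, the promised comparison on highest-weight elements is never carried out. It is not a ``direct, finite computation'': the proposition quantifies over all $n$ and all $k_1,k_2,k_3$, and the highest-weight elements of $B^{k_3}\otimes B^{k_2}\otimes B^{k_1}$ (indexed by Littlewood--Richardson-type data) form an unbounded family, so what is needed is a uniform combinatorial argument --- which is exactly the content of the theorem being proved. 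Your own text concedes this (``exactly the calculation underlying Shimozono's theorem''), which makes the argument circular at its core. If you want a derivation that leans only on results already quoted in this paper, there is a shorter route: Proposition~\ref{R matrix through RSK} says that under RSK the map $R_i$ fixes the insertion tableau and acts as the Lascoux--Sch\"utzenberger operator $s_{d-i}$ on the recording tableau, and Definition--Proposition~\ref{si action} asserts that those operators satisfy the braid relations and fix any tableau whose content they stabilize; since RSK is injective, all claims of the proposition follow at once. Of course that route, too, defers the hard combinatorics to cited results --- which is unavoidable here, and is presumably why the paper states the proposition as a citation rather than proving it.
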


Given $b = (b_d, \ldots, b_1) \in B^{k_d, \ldots, k_1}$, define the \emph{content} (or \emph{weight}) of $b$ to be the tuple
$
\wt(b) = (a_1, \ldots, a_n)
$
such that $a_i$ is the total number of times that $i$ appears in $b_d, \ldots, b_1$. Clearly, $R_\sigma$ preserves content.

A \emph{(strict) composition} of $n$ is a sequence $\mu = \langle \mu_1, \ldots, \mu_d \rangle$ of positive integers such that $n = \mu_1 + \cdots + \mu_d$. We extend the definition of Young diagram from partitions to compositions in the natural way. Consequently, we may speak of the set $\mathcal{T}(\mu)$ of tabloids of shape $\mu$ for any composition $\mu$, and all the key definitions (descent, Knuth move, etc.) carry over from \S \ref{sec:AMBC}. We identify $\mathcal{T}(\mu)$ with the set of elements of $B^{\mu_d, \ldots, \mu_1}$ of content $\langle 1^n \rangle$ by treating each row of a tabloid as a one-row tableau with entries in $[n]$. For each permutation $\sigma$, this identification induces a map $R_\sigma : \mathcal{T}(\mu) \rightarrow \mathcal{T}(\sigma(\mu))$, which we also call the combinatorial $R$-matrix.

The \emph{promotion map}\footnote{
The operation of promotion was originally defined by Sch\"utzenberger \cite{SchutzenbergerPromotion} for arbitrary finite posets, and is often seen in the context of Young tableaux (e.g., in \cite[\S 2.8]{Manivel}, where the corresponding map is called $\partial$).  However, the promotion of a standardizable tabloid typically does \emph{not} agree with the promotion of its standardization.  In particular, the poset corresponding to a partition $\lambda$ in the finite setting is the diagram of $\lambda$, viewed as a subset of $\NN \times \NN$ with the product order, while in the affine setting it is the disjoint union of chains of length $\lambda_1, \lambda_2, \ldots$. On the other hand, the two notions of promotion do agree if we view a tabloid as a standard skew tableau, as mentioned in \S\ref{sec:AMBC}.}
$\pr: \mathcal{T}(\mu) \to \mathcal{T}(\mu)$ acts by adding $1$ to each entry of a tabloid $T$.  More generally, one may define $\pr: B^{\mu_d, \ldots, \mu_1} \to B^{\mu_d, \ldots, \mu_1}$ to be the component-wise action that replaces a one-row tableau of content $(a_1, \ldots, a_n)$ with the unique one-row tableau of content $(a_n, a_1, \ldots, a_{n-1})$.

\begin{lemma}
\label{lem: comb R commutes with promotion}
The combinatorial $R$-matrix commutes with promotion.
\end{lemma}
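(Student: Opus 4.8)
The plan is to reduce to the two-factor case and then to track the parenthesis algorithm of Definition~\ref{defn: comb R} through promotion. Since promotion acts component-wise on $B^{k_d, \ldots, k_1}$, while $R_i$ applies the two-factor map only to the adjacent factors $B^{k_{i+1}} \times B^{k_i}$ and fixes the rest, the map $R_i$ commutes with $\pr$ as soon as the two-factor $R$ commutes with $\pr \times \pr$. By Proposition~\ref{prop:R braid} every $R_\sigma$ is a product of the $R_i$, so it will then commute with $\pr$ as well. Thus it suffices to prove $R(\pr(a), \pr(b)) = (\pr(a'), \pr(b'))$ whenever $R(a,b) = (a',b')$ in $B^{k_2} \times B^{k_1}$.

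Next I would reinterpret promotion at the level of the parenthesis word built in Step~1 of the algorithm. Group that word into blocks $B_1, \ldots, B_n$, where $B_v$ consists of the parentheses sitting below the entries equal to $v$; by construction each block is a run of right parentheses followed by a run of left parentheses. Applying $\pr$ to both $a$ and $b$ increments every entry $v < n$ to $v+1$ and sends every entry $n$ to $1$, and it preserves whether each parenthesis is left or right. Hence its effect on the parenthesis word is precisely to move the block $B_n$ to the front, producing $B_n B_1 \cdots B_{n-1}$ with each block's internal left/right pattern unchanged. Writing $\tau$ for the operation of Steps~2--4 (delete matched pairs, reverse the counts of the unmatched right and left parentheses, and restore the deleted pairs), the desired identity becomes the assertion that $\tau$ commutes with this cyclic rotation of the blocks.

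The remaining step, and the main obstacle, is to prove this commutation by analyzing the matched pairs. The difficulty is that the matching itself is \emph{not} preserved by the rotation: moving $B_n$ to the front can create or destroy matched pairs, so one cannot argue positionwise. The key structural inputs are the normal form of the blocks (right parentheses before left parentheses within each $B_v$) together with the fact, recorded in Step~2 of the algorithm, that after deleting matched pairs every surviving right parenthesis precedes every surviving left parenthesis. Weighting left and right parentheses by $+1$ and $-1$ and using the resulting height function, I would describe the unmatched region of each word by its minimum height and its final height, determine how these statistics and the matched pairs change when $B_n$ is moved to the front, and verify in the resulting short list of cases that the positions $\tau$ assigns to right parentheses transform exactly by the block rotation. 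Conceptually this commutation reflects the fact that the combinatorial $R$-matrix is an affine crystal isomorphism and that promotion realizes the rotation of the affine Dynkin diagram; that viewpoint, via \cite{NakayashikiYamada, ShimozonoDummies}, offers an alternative and less computational route to the same conclusion.
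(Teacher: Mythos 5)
Your reduction to the two-factor case and your identification of promotion with a cyclic rotation of the parenthesis word (moving the block $B_n$ of parentheses under the letter $n$ to the front) are both correct, and they agree with what the paper's proof implicitly does. But your proposal stops exactly where the content of the lemma begins: the assertion that the operation $\tau$ of Steps 2--4 commutes with this block rotation \emph{is} the lemma, and you leave it as a plan (``I would describe \ldots determine \ldots and verify in the resulting short list of cases''). Since, as you yourself observe, the set of matched pairs is not preserved by the rotation, nothing in the proposal actually establishes the commutation; the height-function case analysis might well succeed if carried out, but as written it is a promissory note rather than an argument. The closing appeal to affine crystal theory is likewise only a heuristic unless substantial external input is invoked and verified.

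The missing idea --- the one the paper uses to make the commutation immediate --- is a reinterpretation of Steps 2--3 as \emph{cyclic} matching: match parentheses as if the word were written on a closed loop. Concretely, after the linear matching leaves $\alpha$ right parentheses at positions $p_1 < \cdots < p_\alpha$ followed by $\beta$ left parentheses at positions $q_1 < \cdots < q_\beta$, one continues matching the right-most unpaired left parenthesis with the left-most unpaired right parenthesis (wrapping around the loop), producing the pairs $(q_\beta, p_1), (q_{\beta-1}, p_2), \ldots$, until the unmatched parentheses all have the same type, and then one flips those. When $\alpha \geq \beta$ this flips $p_{\beta+1}, \ldots, p_\alpha$ to left parentheses and keeps all matched positions unchanged, which is exactly the replacement of the unmatched subword by ${)}^{\beta}{(}^{\alpha}$ prescribed in Step 3 (the case $\beta > \alpha$ is symmetric). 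Once the algorithm is phrased this way, rotation-invariance is genuinely obvious: the cyclic matching on a loop does not depend on where the loop is cut, promotion is precisely a re-cutting of the loop, and ``flip the unmatched parentheses'' is pointwise. This observation collapses your ``main obstacle'' entirely and is what your proof needs to be complete.
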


\begin{proof}
Step 3 of the algorithm in Definition~\ref{defn: comb R} may be thought of as repeatedly matching the right-most unpaired left parenthesis with the left-most unpaired right parenthesis until the parentheses remaining unmatched are all of the same type, and then changing the remaining parentheses to the opposite type.  Equivalently, the parentheses are matched as if they lie on a closed loop, rather than a line.  This version of the algorithm clearly respects the cyclic action of promotion.
\end{proof}

\begin{lemma}
\label{lem: R preserves descents}
The combinatorial $R$-matrix preserves the descent set of a tabloid.
\end{lemma}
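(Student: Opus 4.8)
The plan is to reduce to the two-row combinatorial $R$-matrix and then read off descents from the parenthesis algorithm of Definition~\ref{defn: comb R}. By Proposition~\ref{prop:R braid} every $R_\sigma$ is a composition of the elementary maps $R_i$, so it suffices to show that a single $R_i$ preserves the descent set; the desired statement for arbitrary $R_\sigma$ then follows by composing. The map $R_i$ acts only on the two adjacent rows $i$ and $i+1$ of the tabloid---write $U$ for the upper (row $i$) and $L$ for the lower (row $i+1$)---redistributing their combined set of entries $U\cup L$ into new rows $U'$ and $L'$ and leaving every other row fixed. Because we are looking at a content-$\langle 1^n\rangle$ filling, each value in $[n]$ occurs exactly once, and a class $\ol{j}$ is a descent precisely when the value $j+1$ sits in a strictly lower row than the value $j$ (read cyclically, so that $\ol{n}$ compares the values $n$ and $1$). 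I would organize the argument according to whether the two values attached to a putative descent both lie in $U\cup L$.

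First I would treat a descent $\ol{j}$ with $j\in\{1,\dots,n-1\}$, so that $j$ and $j+1$ are genuinely consecutive integers. In the \emph{easy} subcase, at most one of $j,j+1$ lies in $U\cup L$. A value outside $U\cup L$ keeps its row, while a value inside can only move between the two \emph{adjacent} rows $i$ and $i+1$; hence it remains above every row numbered $>i+1$ and below every row numbered $<i$. Thus, relative to a value in a fixed outside row $r\notin\{i,i+1\}$, the ``above/below'' relation is unchanged, and the descent status of $\ol{j}$ is preserved. The crux is the subcase where \emph{both} $j$ and $j+1$ lie in $U\cup L$, where $\ol{j}$ is a descent exactly when $j\in U$ and $j+1\in L$. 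In Step~1 of the algorithm, entries of $U$ receive a left parenthesis and entries of $L$ a right parenthesis; since $j,j+1$ are consecutive integers, no entry of $U\cup L$ lies strictly between them, so they are adjacent in the sorted word. Therefore $\ol{j}$ is a descent if and only if the symbols below $j$ and $j+1$ form an immediately adjacent matched pair. Matched pairs are removed in Step~2 and restored unchanged in Step~4, so every descent of $T$ is again a descent of $R_i(T)$, giving $\Des(T)\subseteq\Des(R_i(T))$; applying this to $R_i(T)$ and invoking $R_i^2=\mathrm{id}$ (Proposition~\ref{prop:R braid}) yields the reverse inclusion. Combining the two subcases shows that every descent $\ol{j}$ with $j\in\{1,\dots,n-1\}$ is preserved.

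Finally I would dispatch the wrap-around descent $\ol{n}$, which compares the values $n$ and $1$. This falls outside the matched-pair argument precisely because $n$ and $1$ are not adjacent in the sorted word; here I would use Lemma~\ref{lem: comb R commutes with promotion}. Promotion $\pr$ cyclically shifts the descent set (it sends a descent $\ol{i}$ to $\ol{i+1}$) and commutes with the $R$-matrix, so $\ol{n}\in\Des(T)$ iff $\ol{1}\in\Des(\pr(T))$ iff $\ol{1}\in\Des\bigl(R_i\pr(T)\bigr)=\Des\bigl(\pr R_i(T)\bigr)$ iff $\ol{n}\in\Des(R_i(T))$, where the middle equivalence is the already-established case $j=1$. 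The main obstacle is the both-values-in-$U\cup L$ subcase, and the observation that unlocks it is that, for content-$\langle 1^n\rangle$ fillings, a descent between two adjacent rows is exactly an adjacent matched parenthesis pair---an invariant of the whole algorithm---while the single non-adjacent cyclic comparison $\ol{n}$ is absorbed by commutation with promotion.
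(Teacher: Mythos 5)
Your proof is correct and takes essentially the same route as the paper's: the key ingredients are identical, namely the observation that a descent $\ol{j}$ with $j \neq n$ corresponds to an adjacent matched parenthesis pair (which the algorithm removes and restores unchanged), the involution $R_i^2 = \mathrm{id}$ to upgrade one inclusion of descent sets to equality, and commutation with promotion (Lemma~\ref{lem: comb R commutes with promotion}) to dispose of the wrap-around class $\ol{n}$. The only difference is presentational: you spell out the bookkeeping for descents whose values lie partly outside the two rows affected by $R_i$, which the paper compresses into the opening reduction to two-row tabloids.
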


\begin{proof}
Since each $R_\sigma$ is a composition of maps that only affect two consecutive rows, it suffices to consider the case of two-row tabloids. Furthermore, since $R_1$ is an involution, it suffices to show that $\Des(T) \subset \Des(R_1(T))$. Suppose $T \in \mathcal{T}(\langle \mu_1, \mu_2 \rangle)$, and let $(a,b)$ be the corresponding element of $B^{\mu_2, \mu_1}$. Suppose $\ol{i}$ is a descent of $T$. By Lemma~\ref{lem: comb R commutes with promotion}, we may assume that $i \neq n$, so that $i$ is an entry of $b$ and $i+1$ is an entry of $a$. When one applies the algorithm of Definition~\ref{defn: comb R}, a left parenthesis is placed below $i$ and a right parenthesis below $i+1$, so these parentheses form a matched pair. This means $i \in b'$ and $i+1 \in a'$, so $\ol{i}$ is a descent of $R_1(T)$.
\end{proof}

The next result is the main result of this section; it gives an algorithm to compute affine evacuation in terms of the combinatorial $R$-matrix.

\begin{theorem}
\label{thm: alg for e}
Given a tabloid $T \in \mathcal{T}(\mu)$, the affine evacuation $e(T)$ may be computed as follows:
\begin{enumerate}
\item reverse the order of the rows of $T$, thus obtaining a tabloid in $\mathcal{T}(w_0(\mu))$;
\item replace each entry $\ol{i}$ with $\ol{n+1-i}$;
\item apply $R_{w_0}$ to restore the tabloid to its original shape.
\end{enumerate}
\end{theorem}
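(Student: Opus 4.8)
The plan is to verify that the composite operation described in the theorem—call it $f$—satisfies the two defining properties of $e$ from Theorem~\ref{thm: def1 for evacuation}, together with being an automorphism of $\AAA_\lambda$; by the uniqueness clause of that theorem, this forces $f = e$. So I would check three things: that $f$ is a graph automorphism of $\AAA_\mu$ (when $\mu$ is a partition), that $f$ sends $\RRSS(\lambda, \ol{k})$ to $\RRSS(\lambda, \ol{n-k})$, and that $f$ converts a Knuth move of type $\ol{k}$ into one of type $\ol{n-k-1}$.

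First I would handle the well-definedness and automorphism claims. Steps (1) and (2) are manifestly bijective involutions on tabloids, and step (3) is the combinatorial $R$-matrix $R_{w_0}$, which by Proposition~\ref{prop:R braid} is a well-defined bijection restoring the reversed shape $w_0(\mu)$ to the original partition shape $\mu$. So $f$ is a bijection on $\mathcal{T}(\mu)$. To see it respects Knuth moves I would track the descent set through each step. Step (1), row reversal, sends a descent $\ol{i}$ to a descent exactly when the relative row positions of $\ol{i}, \ol{i+1}$ are reversed, which contributes a $\ol{i} \mapsto \ol{n-i}$-type flip once combined with step (2); step (2), the complementation $\ol{i} \mapsto \ol{n+1-i}$, reverses the roles of $\ol{i}$ and $\ol{i+1}$ so that $\ol{i} \in \Des$ becomes $\ol{n-i} \in \Des$; and step (3) preserves descent sets outright by Lemma~\ref{lem: R preserves descents}. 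Composing these descent-set transformations shows $f$ carries $\Des(T)$ to $n-\Des(T)$, which is precisely the descent behavior of $e$ established in Proposition~\ref{rotation-descent-set}; since Knuth moves are detected by incomparability of descent sets, $f$ sends Knuth moves to Knuth moves and converts type $\ol{k}$ to type $\ol{n-k-1}$, matching the second bullet of Theorem~\ref{thm: def1 for evacuation}.

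Next I would verify the base case on reverse row superstandard tabloids. Applying steps (1) and (2) to $\RRSS(\lambda, \ol{k})$ produces a tabloid of shape $w_0(\lambda)$ whose rows are again filled by consecutive residue classes; a direct computation of the resulting descent set (using that the unique descent of $\RRSS(\lambda, \ol{k})$ is $\ol{k}$) shows it has singleton descent set $\{\ol{n-k}\}$. Since $R_{w_0}$ preserves the descent set and restores partition shape, and since $\RRSS(\lambda, \ol{n-k})$ is the \emph{unique} tabloid of shape $\lambda$ with descent set $\{\ol{n-k}\}$, we conclude $f(\RRSS(\lambda,\ol{k})) = \RRSS(\lambda, \ol{n-k})$, matching the first bullet.

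The main obstacle I anticipate is the bookkeeping in step (3), specifically confirming that $R_{w_0}$ genuinely lands in partition-shape tabloids rather than merely reordering rows. The subtlety is that after reversing rows in step (1) one has a tabloid of the reversed (non-partition) composition shape $w_0(\mu)$, and one must argue that $R_{w_0}$—the full longest-element braid of $R$-matrix moves—correctly sorts the rows back into weakly decreasing order. This is where Proposition~\ref{prop:R braid} does the essential work: it guarantees $R_{w_0}$ is well-defined independent of reduced word, so the output shape is unambiguously $\mu$. The one genuinely delicate point is reconciling the cyclic nature of promotion (used implicitly to reduce the descent analysis to the case $i \ne n$, via Lemma~\ref{lem: comb R commutes with promotion}) with the linear reading of descents; I would need to confirm that the descent-set transformation computed above is consistent across the residue class $\ol{n}$, which follows because all the maps involved commute with promotion and the descent condition is itself promotion-equivariant.
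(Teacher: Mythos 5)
Your overall strategy---invoking the uniqueness clause of Theorem~\ref{thm: def1 for evacuation} after checking the map on reverse row superstandard tabloids and on Knuth moves---is exactly the paper's, and your treatment of the base case $\RRSS(\mu,\ol{k})$ is complete and matches the paper's argument. But there is a genuine gap in your Knuth-move step. You argue that $f$ transforms descent sets by $\Des(T) \mapsto n - \Des(T)$, and then conclude that ``since Knuth moves are detected by incomparability of descent sets, $f$ sends Knuth moves to Knuth moves.'' This inference is invalid. A Knuth move between tabloids requires two things: (i) the tabloids differ by exchanging two cyclically consecutive residues $\ol{j}$ and $\ol{j+1}$, and (ii) their descent sets are incomparable. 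Tracking descent sets gives you (ii) for the images $f(T_1), f(T_2)$, but says nothing about (i): two tabloids with incomparable descent sets are in general not connected by any Knuth move, so you have not shown that the edge $\{T_1,T_2\}$ of $\AAA_\mu$ maps to an edge at all. Without that, $f$ is not known to be a graph automorphism, and the uniqueness clause of Theorem~\ref{thm: def1 for evacuation} cannot be applied.

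Condition (i) is where essentially all the work lies, and it is what the paper's proof actually does: after the easy observation that steps (1)--(2) convert a swap of consecutive residues into a swap of consecutive residues (with the type changing from $\ol{k}$ to $\ol{n-k-1}$), the paper shows that each elementary map $R_i$ sends a pair of tabloids differing by a Knuth swap to another pair differing by a Knuth swap. This requires a case analysis of the parenthesis-matching algorithm of Definition~\ref{defn: comb R}: when both letters being swapped lie in the union of the two rows acted on by $R_i$, one traces which parentheses get matched and shows that the ``fate'' of the remaining letter is identical in the two computations, concluding that $R_i(T_1)$ and $R_i(T_2)$ differ by swapping $\ol{k+1}$ with either $\ol{k}$ or $\ol{k+2}$---note that the swapped pair can change under $R_i$, which is precisely why no argument phrased purely in terms of descent sets can capture this. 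Finally, the obstacle you flagged as the main one (whether $R_{w_0}$ genuinely restores partition shape) is not in fact delicate: it is immediate from Proposition~\ref{prop:R braid} that $R_{w_0}$ permutes the row lengths as prescribed. The real obstacle is the entry-level interaction of $R_i$ with Knuth swaps, which your proposal leaves unaddressed.
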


\begin{proof}
Let $\gamma(T)$ denote the result of applying the first two steps of this procedure to $T$, and let $\wh{e}(T) := R_{w_0}(\gamma(T))$ denote the final result. By Theorem \ref{thm: def1 for evacuation}, it suffices to show that $\wh{e}(\RRSS(\mu, \ol{k})) = \RRSS(\mu, \ol{n-k})$ for each $\ol{k} \in [\ol{n}]$, and that if $T_1, T_2$ are connected by a Knuth move of type $\ol{k}$, then $\wh{e}(T_1), \wh{e}(T_2)$ are connected by a Knuth move of type $\ol{n-k-1}$.

By construction, $\gamma\left(\RRSS(\mu, \ol{k})\right) = \RRSS(w_0(\mu), \ol{n-k})$. Since the $R$-matrix permutes shapes, we know that $R_{w_0}(\RRSS(w_0(\mu), \ol{n-k}))$ has shape $\mu$.  By Lemma \ref{lem: R preserves descents}, we know that $\Des(R_{w_0}(\RRSS(w_0(\mu), \ol{n-k}))) = \{\ol{n-k}\}$. So $R_{w_0}(\RRSS(w_0(\mu), \ol{n-k}) = \RRSS(\mu, \ol{n-k})$, the unique tabloid of shape $\mu$ with this descent set. Thus $\wh{e}(\RRSS(\mu, \ol{k})) = \RRSS(\mu, \ol{n-k})$, as needed.

It is clear that $T_1$ and $T_2$ are connected by a Knuth move of type $\ol{k}$ if and only if $\gamma(T_1)$ and $\gamma(T_2)$ are connected by a Knuth move of type $\ol{n-k-1}$, so it remains to show that if $T_1$ and $T_2$ are connected by a Knuth move of type $\ol{k}$, then so are $R_i(T_1)$ and $R_i(T_2)$ for all $i$.

Suppose that $T_1$ and $T_2$ are connected by a Knuth move of type $\ol{k}$. Let $U_1 = R_i(T_1)$ and $U_2 = R_i(T_2)$ for some $i$. By Lemma \ref{lem: R preserves descents}, we know that $U_j$ has the same descent set as $T_j$, so it suffices to show that $U_1$ and $U_2$ are related by swapping $\ol{k+1}$ with $\ol{k}$ or $\ol{k+2}$. Using Lemma \ref{lem: comb R commutes with promotion}, we may assume $k \in [n-2]$, so we drop the bars above the numbers. Suppose that $T_2$ is obtained from $T_1$ by swapping $k+1$ and $k+2$ (the case where $T_1$ and $T_2$ are related by swapping $k+1$ and $k$ is dealt with similarly). If the union of rows $i$ and $i+1$ in $T_1$ contains at most one of $k+1$ and $k+2$, then it's clear from the procedure of Definition \ref{defn: comb R} that the map $R_i$ commutes with the swap.

Now suppose that $k+1$ and $k+2$ are both in the union of rows $i$ and $i+1$ of $T_1$. By symmetry, we may assume that the descent set of $T_1$ contains $\ol{k}$ but not $\ol{k+1}$, and the descent set of $T_2$ contains $\ol{k+1}$ but not $\ol{k}$. This requires that in $T_1$, the numbers $k$ and $k+2$ are in row $i$, and $k+1$ is in row $i+1$. When we compute $U_1$, $k$ and $k+2$ get left parentheses and $k+1$ gets a right parenthesis. The parentheses under $k$ and $k+1$ form a matched pair, and we are left with a left parenthesis under $k+2$. Thus, $k$ and $k+1$ remain in their respective rows, and the fate of $k+2$ depends on the arrangement of the other elements in rows $i$ and $i+1$. When we compute $U_2$, the parentheses under $k+1$ and $k+2$ are matched, and we are left with a left parenthesis under $k$; thus, $k+1$ and $k+2$ remain in their respective rows, and the fate of $k$ depends on the arrangement of the other elements in rows $i$ and $i+1$ in precisely the same way that the fate of $k+2$ depends on the other elements when computing $U_1$. If $k+2$ ends up in row $i$ of $U_1$ (and thus $k$ ends up in row $i$ of $U_2$), then $U_1$ and $U_2$ differ by swapping $k+1$ and $k+2$. If $k+2$ ends up in row $i+1$ of $U_1$, then $U_1$ and $U_2$ differ by swapping $k+1$ and $k$. This completes the proof.
\end{proof}

\subsection{Affine evacuation and RSK}
\label{sec: evac and RSK}

In this section, we give another procedure for computing affine evacuation, which will play a crucial role in the study of its fixed points in \S\ref{sec: counting fixed points}. It is based on the Robinson--Schensted--Knuth correspondence, and we assume familiarity with that map as in \cite[Ch.~7]{EC2}.

For a partition $\la$, let $\SYT(\la)$ denote the set of standard Young tableaux of shape $\la$, and let $\SSYT_n(\la)$ denote the set of semistandard Young tableaux of shape $\la$ with entries in $[n]$. If $\mu = \langle \mu_1, \ldots, \mu_d \rangle$ is a strict composition of $n$, let $\SSYT(\la, \mu)$ be the set of semistandard Young tableaux of shape $\la$ and content $\mu$. The Robinson--Schensted--Knuth (RSK) correspondence gives a bijection
\[
\ds B^{\mu_d, \ldots, \mu_1} \rsks \bigsqcup_{\la \, \vdash \, n} \SSYT_n(\la) \times \SSYT(\la, w_0(\mu))
\]
by sending $(b_d, \ldots, b_1) \in B^{\mu_d, \ldots, \mu_1}$ to the two-row array $
\begin{pmatrix} 
1^{\mu_d} & 2^{\mu_{d - 1}} & \cdots & d^{\mu_1} \\
b_d  & b_{d - 1} & \cdots & b_1
\end{pmatrix}
$ and then using the Schensted insertion algorithm (as in \cite[\S7.11]{EC2}) to produce the pair $(P, Q)$.  Identifying $\mathcal{T}(\mu)$ with the elements of $B^{\mu_d, \ldots, \mu_1}$ of weight $\langle 1^n \rangle$, RSK restricts to a bijection
\[
\ds \mathcal{T}(\mu) \rsks \bigsqcup_{\la \, \vdash \, n} \SYT(\la) \times \SSYT(\la, w_0(\mu)).
\]
\begin{example}
\label{ex:rsk on a tabloid}
Consider the tabloid 
\[
\tableau[sY]{\ol{2}, \ol{3}, \ol{5}, \ol{7} \\ \ol{1}, \ol{4} \\ \ol{6}}.
\]
We first convert it to the two-row array $\begin{pmatrix} 1 & 2 & 2 & 3 & 3 & 3 & 3\\ 6 & 1 & 4 & 2 & 3 & 5 & 7 \end{pmatrix}$ by reading the rows from the bottom up, and then apply RSK to produce the pair
\[
(P,Q) = \left( \, \tableau[sY]{1, 2, 3, 5, 7 \\ 4 \\ 6} \; , \quad \tableau[sY]{1, 2, 3, 3, 3 \\ 2 \\3 } \, \right).
\]
\end{example}

To describe what affine evacuation on $\mathcal{T}(\mu)$ corresponds to on the other side of this bijection, we need several additional definitions.  The first of these is an action of the symmetric group on semistandard tableaux that was first defined by Lascoux and Sch\"utzenberger.

\begin{defprop}[{Lascoux--Sch\"utzenberger \cite[\S4]{LascouxSchutzenberger}}]
\label{si action}
For $i \geq 1$, let $s_i$ act on a semistandard tableau $U$ by the following algorithm:
\begin{enumerate}
\item Concatenate the rows of $U$, beginning with the last row, to form the \emph{reading word} $W$.
\item Place a right (respesctively, left) parenthesis beneath each occurrence of the letter $i$ (resp., $i+1$), and recursively remove pairs of matched parentheses as in Step 2 of the algorithm in Definition \ref{defn: comb R}. 
\item After removing all matched pairs, there remain $\alpha$ right parentheses followed by $\beta$ left parentheses, which correspond to a subword $i^\alpha (i+1)^\beta$ in $W$. Replace this subword with $i^\beta (i+1)^\alpha$. 
\end{enumerate}
The resulting word is the reading word of a unique semistandard tableau $s_i(U)$ of shape $\la$.  Moreover, these maps satisfy the braid relations (as in Proposition~\ref{prop:R braid}) and so extend to an action of the symmetric group $\Symm_d$ on the set of semistandard tableaux of shape $\lambda$ with entries in $[d]$. Given $U \in \SSYT_d(\la)$ and $w \in \Symm_d$, we write $w(U)$ for the action of $w$ on $U$. Moreover, if $U$ has content $\mu$ and $w(\mu) = \mu$, then $w(U) = U$.
\end{defprop}

By definition, this action permutes the content, i.e., if $U$ has content $\nu$ then $w(U)$ has content $w(\nu)$.  The connection between this action and the combinatorial $R$-matrix is given in the following result.

\begin{proposition}[{\cite[Prop.~5.1]{ShimozonoDummies}}]
\label{R matrix through RSK}
If $(b_d, \ldots, b_1) \rsks (P,Q)$, then $R_i(b_d, \ldots, b_1) \rsks (P, s_{d-i}(Q))$.\footnote{The result in \cite{ShimozonoDummies} has $s_i$ rather than $s_{d-i}$. This is because the definition of the recording tableau $Q$ in that paper is the ``reverse'' of the definition in \cite{EC2} that we use.}
\end{proposition}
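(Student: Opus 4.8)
The plan is to verify the two assertions separately: that $R_i$ fixes the insertion tableau $P$, and that it acts on the recording tableau $Q$ by the Lascoux--Sch\"utzenberger operator $s_{d-i}$. Throughout I use that, with the conventions fixed above, $P$ is the insertion tableau of the concatenated word $b_d b_{d-1}\cdots b_1$ (each row written left to right), and that in the two-line array the factor $b_{i+1}$ carries the label $d-i$ while $b_i$ carries the label $d-i+1$, so that the pair of labels affected is $\{d-i,\, d-i+1\}$.

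For the $P$-part I would invoke the plactic description of the combinatorial $R$-matrix recalled in the Remark following Definition~\ref{defn: comb R}: writing $(b'_{i+1},b'_i)=R(b_{i+1},b_i)$, one has $b_{i+1}*b_i=b'_{i+1}*b'_i$, i.e. the words $b_{i+1}b_i$ and $b'_{i+1}b'_i$ are Knuth equivalent. Since Knuth equivalence is a congruence, prepending $b_d\cdots b_{i+2}$ and appending $b_{i-1}\cdots b_1$ preserves equivalence, so the reading words of $(b_d,\ldots,b_1)$ and of $R_i(b_d,\ldots,b_1)$ are Knuth equivalent and hence yield the same insertion tableau $P$. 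This is the easy half.

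For the $Q$-part I would first localize. Writing $P^{(m)}$ for the tableau after inserting the first $m$ label-blocks, the blocks carrying labels $1,\ldots,d-i-1$ (namely $b_d,\ldots,b_{i+2}$) are untouched by $R_i$, so $P^{(d-i-1)}$ and the recording of those labels are literally unchanged. The same congruence argument applied to the prefixes $b_d\cdots b_{i+2}(b_{i+1}b_i)$ and $b_d\cdots b_{i+2}(b'_{i+1}b'_i)$ shows that the intermediate tableaux after the two affected blocks again agree, $P^{(d-i+1)}=(P')^{(d-i+1)}$; consequently the insertions of the trailing blocks $b_{i-1},\ldots,b_1$ proceed identically and record labels $d-i+2,\ldots,d$ in the same cells. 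Hence $Q$ and $Q':=Q(R_i(b_d,\ldots,b_1))$ can differ only on the cells carrying the labels $d-i$ and $d-i+1$. As $s_{d-i}$ likewise fixes every cell outside these two labels, the problem reduces to comparing two fillings, by the letters $d-i$ and $d-i+1$, of the single skew shape $\theta=\mathrm{shape}(P^{(d-i+1)})/\mathrm{shape}(P^{(d-i-1)})$, and a content count confirms both candidates have $\mu_i$ letters $d-i$ and $\mu_{i+1}$ letters $d-i+1$, matching the content swap effected by $s_{d-i}$.

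The remaining and genuinely substantive step is to show that these two skew fillings coincide — equivalently, that the redistribution of values produced by the parenthesis matching of the $R$-matrix (run on the two transferred rows) is carried by RSK to the redistribution of the labels $d-i,\,d-i+1$ produced by the parenthesis matching of $s_{d-i}$ (run on the reading word of $Q$). This is the main obstacle, and it cannot be settled by shape and content alone, since a two-letter filling of a skew shape is not in general determined by its content. For the base case where the initial tableau $P^{(d-i-1)}$ is empty, the shape $\theta$ is straight, its two-letter filling \emph{is} determined by shape and content, and the identity is immediate. For the general skew case I would either (i) observe that both operations are instances of the crystal reflection operator $\sigma_{d-i}$ — the $R$-matrix being the $\mathrm{GL}_n$-crystal isomorphism reversing two tensor factors, and $s_{d-i}$ the reflection acting on the commuting $\mathrm{GL}_d$-crystal carried by $Q$, the two structures being interchanged by RSK — so that each reverses the relevant $(d-i)$-string and they must agree; or (ii) give a direct induction matching the two parenthesizations, using that $R$ transfers exactly the unmatched values between the two rows while $s_{d-i}$ toggles exactly the unmatched labels. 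Approach (i) is cleaner but imports crystal machinery beyond what is developed here, while approach (ii) is elementary but requires careful tracking of how insertion of the two rows into the nonempty tableau $P^{(d-i-1)}$ translates the value-parenthesization into the label-parenthesization; I expect this translation to be the crux of the argument.
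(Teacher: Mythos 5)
First, a point of comparison: the paper does not prove this proposition at all. It is imported wholesale from Shimozono (the bracketed citation in the statement), and the paper's only original contribution is the footnote reconciling conventions ($s_{d-i}$ in place of $s_i$, because the recording tableau there is built from the reversed array). So your attempt is being measured against a citation, not an internal argument; you have attempted strictly more than the paper does.

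Your reduction is correct as far as it goes: the plactic characterization $a * b = a' * b'$ of $R$, together with the fact that Knuth equivalence is a congruence under concatenation, gives $P(R_i(b_d,\ldots,b_1)) = P$; and your prefix/suffix localization correctly shows that $Q' := Q(R_i(b_d,\ldots,b_1))$ and $s_{d-i}(Q)$ can differ at most on the cells of the skew shape $\theta$ occupied by the labels $d-i$ and $d-i+1$, where both have the same (swapped) content. But, as you yourself acknowledge, the entire content of the proposition lies in the step you have not carried out: a semistandard filling of a skew shape by two letters is \emph{not} determined by its content (a $\theta$ consisting of two cells in distinct rows and columns already admits two valid fillings), so the reduction by itself proves nothing. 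Of your two proposed completions, (i) is the standard route --- $R$ is the unique $\mathfrak{gl}_n$-crystal isomorphism $B^{k_2} \otimes B^{k_1} \cong B^{k_1} \otimes B^{k_2}$, RSK intertwines the commuting $\mathfrak{gl}_n$ and $\mathfrak{gl}_d$ structures, and the parenthesis operator of Lascoux--Sch\"utzenberger is the crystal reflection --- but each of these three ingredients is itself a nontrivial theorem that you would have to prove or import, which is exactly why the paper cites Shimozono rather than arguing directly. Option (ii), as written, is a restatement of the goal ("match the two parenthesizations") rather than an argument; the interaction of the two matchings with insertion into the nonempty tableau $P^{(d-i-1)}$ is the hard part, not a routine verification. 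So: an honest and correct reduction, with a genuine gap at the acknowledged crux.
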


Next, we describe a version of the Sch\"utzenberger evacuation map that acts on $\SSYT_d(\la)$.  We denote it by $e_d$ to emphasize the dependence on $d$. 

\begin{definition}
\label{def: e_d}
Given a tableau $U$ in $\SSYT_d(\la)$, let $W = W_1 \cdots W_n \in [d]^n$ be the reading word of $U$ (as in Definition--Proposition~\ref{si action}), and let $W' = (d + 1 - W_n) \cdots (d + 1 - W_1)$ be the reverse-complement of $W$.  Under RSK, $W'$ corresponds to a pair $(P(W'), Q(W'))$, and we define $e_d(U) = P(W')$.  In particular, if $U$ is a \emph{standard} tableau, then $W$ is a permutation, $W' = w_0 W w_0$ as elements of $\Symm_n$, and $e_n(U) = e(U)$ is the usual evacuation defined in \S\ref{sec:finite}.

Further, we define a map $e_d^* = w_0 \circ e_d$, where $w_0$ is the longest element of $\Symm_d$ acting as in Definition--Proposition~\ref{si action}.
\end{definition}

If $U$ has content $\nu$, then $e_d(U)$ has content $w_0(\nu)$, where $w_0$ is the longest permutation in $\Symm_d$. Thus, $e_d^*$ is content-preserving. Furthermore, the maps $e_d$ and $w_0$ commute,\footnote{This follows from the identity $e_d \circ s_i = s_{d-i} \circ e_d$, which is proved in, e.g., \cite[Prop. 2.87(iii)]{ShimozonoDummies}.} so $e_d^*$ is an involution on $\SSYT(\la, \mu)$.

The next result is the main result of this subsection, expressing the action of the affine evacuation map $e$ on a tabloid $T$ in terms of its image under RSK.

\begin{prop}
\label{prop: alt alg for e}
Suppose $\mu$ is a composition of $n$ with $d$ parts, and $T \in \mathcal{T}(\mu)$ is a tabloid of shape $\mu$. If $T \rsks (P,Q)$, then
\[
e(T) \rsks (e_n(P), e_d^*(Q)).
\]
\end{prop}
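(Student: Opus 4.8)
The plan is to follow the three-step algorithm of Theorem~\ref{thm: alg for e} and track its effect through RSK. Write $\pi$ for the two-row array (biword) associated to $T$, so that $\pi \rsks (P,Q)$ with $P \in \SYT(\lambda)$ (standard because $T$ has content $\langle 1^n\rangle$) and $Q \in \SSYT(\lambda, w_0(\mu))$; let $v$ denote the bottom word of $\pi$, a genuine permutation of $[n]$. Recall that $e(T) = R_{w_0}(\gamma(T))$, where $\gamma$ is the composite of steps (1)--(2) and $R_{w_0}$ is step (3). The first reduction is to dispatch step (3): by Proposition~\ref{R matrix through RSK} each $R_i$ fixes the insertion tableau and acts on the recording tableau by $s_{d-i}$ (the Lascoux--Sch\"utzenberger action), and since the diagram automorphism $s_j \mapsto s_{d-j}$ of $\Symm_d$ fixes $w_0$, composing along a reduced word shows that $R_{w_0}$ acts on the recording tableau by the $\Symm_d$-action of $w_0$. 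Hence if $\gamma(T) \rsks (P_2, Q_2)$ then $e(T) = R_{w_0}(\gamma(T)) \rsks (P_2, w_0(Q_2))$, and since $e_d^* = w_0 \circ e_d$ it suffices to prove that $\gamma(T) \rsks (e_n(P), e_d(Q))$.

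The second step is to recognize $\gamma$ as reverse-complementation of the biword. A direct bookkeeping check shows that the biword of $\gamma(T)$ is $\pi^{\mathrm{rc}}$, obtained from $\pi$ by reversing the order of the columns and replacing each column $(u_k, v_k)$ by $(d+1-u_k,\, n+1-v_k)$ and re-sorting: reversing the rows of $T$ applies $w_0$ to the top alphabet $[d]$, while the recoloring $\ol{i} \mapsto \ol{n+1-i}$ applies $j \mapsto n+1-j$ to the bottom alphabet $[n]$. The insertion tableau of a biword depends only on its bottom word read left to right, which for $\pi^{\mathrm{rc}}$ is the reverse-complement $v^{\mathrm{rc}}$ of $v$. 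Since $v$ is a permutation of $[n]$, Sch\"utzenberger's theorem (the reverse-complement of a permutation evacuates its insertion tableau) gives $P(\pi^{\mathrm{rc}}) = e(P(v)) = e(P) = e_n(P)$, using $P(v) = P(\pi) = P$ and the fact (Definition~\ref{def: e_d}) that $e_n$ agrees with ordinary evacuation on standard tableaux.

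The main work, and the expected obstacle, is to identify the recording tableau $Q(\pi^{\mathrm{rc}})$ with $e_d(Q)$, because $e_d$ is defined through one particular word (the reading word of $Q$) whereas RSK naturally produces a different, only Knuth-equivalent, word. I would bridge this gap using the transpose symmetry of RSK together with the Knuth-invariance of reverse-complementation. Concretely, transposing a biword swaps $P$ and $Q$, and reverse-complementation commutes with transposition, so $Q(\pi^{\mathrm{rc}}) = P\big((\pi^{\mathrm{rc}})^t\big) = P\big((\pi^t)^{\mathrm{rc}}\big)$. Writing $\tilde u$ for the bottom word of $\pi^t$ (a word in $[d]$ with $P(\tilde u) = Q$), the right-hand side equals $P(\tilde u^{\mathrm{rc}})$, the insertion tableau of the reverse-complement of $\tilde u$ in the alphabet $[d]$. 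Now $\tilde u$ and the reading word $W$ of $Q$ both insert to $Q$, hence are Knuth-equivalent; and reverse-complementation preserves Knuth equivalence (a short check shows it interchanges the two elementary Knuth relations). Therefore $\tilde u^{\mathrm{rc}}$ and $W^{\mathrm{rc}} = W'$ are Knuth-equivalent, so $P(\tilde u^{\mathrm{rc}}) = P(W') = e_d(Q)$ by the definition of $e_d$. Combining, $\gamma(T) \rsks (e_n(P), e_d(Q))$, which completes the argument.

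The points demanding the most care are: (i) verifying precisely that $R_{w_0}$ realizes the $w_0$-action on the recording tableau despite the index flip $i \mapsto d-i$ in Proposition~\ref{R matrix through RSK}; and (ii) the Knuth-invariance of reverse-complementation, which is exactly what lets one replace the ad hoc word $\tilde u$ coming from the transpose symmetry by the canonical reading word used to define $e_d$. Keeping the two alphabets straight ($[d]$ on the recording side, $[n]$ on the insertion side) and the content bookkeeping ($Q$ has content $w_0(\mu)$, while $e_d(Q)$ has content $\mu$) is routine but must be maintained throughout.
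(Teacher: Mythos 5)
Your proof is correct and follows essentially the same route as the paper's: both decompose $e = R_{w_0} \circ \gamma$ via Theorem~\ref{thm: alg for e}, identify $\gamma$ with reverse-complementation of the (bi)word to get $e_n(P)$ and $e_d(Q)$, and then apply Proposition~\ref{R matrix through RSK} to convert $R_{w_0}$ into the $w_0$-action yielding $e_d^*(Q)$. The only difference is that you spell out two steps the paper handles by citation — the recording-tableau identity $Q(\pi^{\mathrm{rc}}) = e_d(Q)$ (which the paper attributes to the symmetry of RSK in \cite[\S7.13]{EC2}, and which you derive from transpose symmetry plus Knuth-invariance of reverse-complementation) and the fact that the index flip $i \mapsto d-i$ fixes $w_0$.
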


\begin{proof}
Fix a tabloid $T$ of shape $\mu$, and let $(P, Q)$ be the image of $T$ under RSK.  By definition, $P$ is the insertion tableau of the reading word $W$ of $T$.  
Let $\gamma(T) \in \mathcal{T}(w_0(\mu))$ be the tabloid obtained from $T$ by reversing the order of the rows and replacing each element $\ol{i}$ with $\ol{n-i+1}$.  Suppose $\gamma(T) \rsks (P', Q')$, so that $P'$ is the insertion tableau of the row word $W'$ of $\gamma(T)$.  By definition of $\gamma$, $W'$ is the reverse-complement of $W$, and so we have $P' = e_n(P)$ by Definition~\ref{def: e_d}. Similarly, the symmetry of RSK (see, e.g., \cite[\S7.13]{EC2}) shows that $Q' = e_d(Q)$.
By Theorem \ref{thm: alg for e}, we have $e(T) = R_{w_0}(\gamma(T))$, and the result follows from Proposition~\ref{R matrix through RSK}.
\end{proof}

\begin{example}
We continue with Example~\ref{ex:rsk on a tabloid}.  Since $P$ is a standard Young tableau, 
\[
e_7(P) = e(P) = \tableau[sY]{1, 2, 4, 6, 7 \\ 3 \\ 5}.
\]
We compute $e_3^*(Q)$ as follows.  First, the row word of $Q$ is $3212333$, with reverse-complement $1112321$.  The insertion tableau for this word is $e_3(Q) = \tableau[sY]{1,1,1,1,2 \\ 2 \\3}$.  Next, we apply $w_0 = s_1 \cdot s_2 \cdot s_1$:
\[
e_3(Q) = \tableau[sY]{1,1,1,1,2 \\ 2 \\3}
\overset{s_1}{\longrightarrow}
\tableau[sY]{1,1,2,2,2 \\ 2 \\3}
\overset{s_2}{\longrightarrow}
\tableau[sY]{1,1,3,3,3 \\ 2 \\3}
\overset{s_1}{\longrightarrow}
\tableau[sY]{1,2,3,3,3 \\ 2 \\3}
= e^*_3(Q).
\]
(In this case, by coincidence, $e^*_3(Q) = Q$.)  The pair $(e_7(P), e_3^*(Q))
$ corresponds under RSK to the two-row array $\begin{pmatrix} 1 & 2 & 2 & 3 & 3 & 3 & 3 \\ 5 & 1 & 3 & 2 & 4 & 6 & 7 \end{pmatrix}$, and this array encodes the tabloid
\[
e(T) = 
\tableau[sY]{\ol{2}, \ol{4}, \ol{6}, \ol{7} \\ \ol{1}, \ol{3} \\ \ol{5}}.
\]
\end{example}

\subsection{Asymptotic definition of affine evacuation}
\label{sec:asymptotic}

In this section, we describe an ``asymptotic" realization of the evacuation of a tabloid, in the spirit of \cite{pak}.  To this end, we describe several new pieces of terminology, to be used only in this section.  Given a sequence $(T_0, T_1, \ldots )$ of tableaux, each filled bijectively with some subset of the integers, say that the sequence \emph{stabilizes} if, for every integer $i$, there exists an integer $j = j(i)$ such that for all sufficiently large $k$, one has that $i$ appears in row $j$ of $T_k$.  If a sequence $(T_0, \ldots)$ stabilizes, we say moreover that it is \emph{periodic} if $j(i) = j(i + n)$ for all $i$.  In this case, there is a tabloid $T$ such that $\ol{i}$ appears in row $j(i)$ of $T$; we say this tabloid is the \emph{limit} of the sequence.

Given a tabloid $T$, one may associate a sequence of tableaux as follows: let $w$ be any affine permutation such that $P(w) = T$, let $w^{(i)}$ be the finite sub-word $w^{(i)} = (w_{-in + 1}, w_{-in + 2}, \ldots,$ $w_{in + n - 1}, w_{in + n})$ of the doubly-infinite sequence of values of $w$, and let $T_i = P(w^{(i)})$ be the insertion tableau for this word under RS.  By \cite[Thm.~7.3]{CPY}, the original tabloid $T$ is the limit of this sequence.
\begin{prop}
\label{prop:asymptotic}
For any tabloid $T$, with the sequence $(T_0, T_1, \ldots)$ of tableaux as in the preceding paragraph, one has that the sequence $(e(T_0), e(T_1), \ldots )$ is periodic, with limit $e(T)$.  (Here by $e(T_k)$ we mean that one should apply the usual (finite) evacuation to $T_k$, under the order-isomorphism between the set of entries of $T_k$ and $\{1, 2, \ldots, |T_k|\}$.)
\end{prop}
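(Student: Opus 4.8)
The plan is to deduce the statement from the stabilization theorem \cite[Thm.~7.3]{CPY}, applied not to $w$ itself but to the rotated permutation $r(w)$, by exploiting that the affine rotation $r$ and finite evacuation are both avatars of reverse-complementation. First I would fix $w$ to be an \emph{honest} affine permutation with $P(w)=T$, so that $\sum_{i=1}^n w_i=\tfrac{n(n+1)}{2}$ (working in the non-extended group, this is automatic). By Theorem~\ref{thm: def1 for evacuation} we have $P(r(w))=e(P(w))=e(T)$, and $r(w)$ is again honest. Writing $(r(w))^{(k)}$ for the window of $r(w)$ over the positions $-kn+1,\dots,kn+n$ and setting $S_k:=P\big((r(w))^{(k)}\big)$, the cited result applied to $r(w)$ shows that $(S_0,S_1,\dots)$ is periodic with limit $P(r(w))=e(T)$. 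It therefore suffices to prove that, for every integer $a$ and all sufficiently large $k$, the entry $a$ lies in the same row of $e(T_k)$ as of $S_k$.

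The bridge between the two sequences is a standardization identity. From $(r(w))_j=n+1-w_{n+1-j}$ and the fact that $j\mapsto n+1-j$ maps the position-interval $[-kn+1,\,kn+n]$ to itself (interchanging its endpoints), the window $(r(w))^{(k)}$ is the reverse-complement of $w^{(k)}$: its entries are those of $w^{(k)}$ read backwards with each value $v$ replaced by $n+1-v$. Passing to standardizations $u_k:=\std(w^{(k)})\in\Symm_m$ with $m=n(2k+1)$, reversing positions and order-complementing values each contribute a factor of $w_0$, so $\std\big((r(w))^{(k)}\big)=w_0u_kw_0$. Hence by the finite theory of \S\ref{sec:finite}, $\std(S_k)=P(w_0u_kw_0)=e(P(u_k))=\std(e(T_k))$; that is, $e(T_k)$ and $S_k$ are \emph{the same} standard tableau, relabelled respectively by the value set $V_k$ of $w^{(k)}$ and by $V_k'=(n+1)-V_k$.

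Finally I would reconcile the two relabellings, which is the crux. For fixed $a$ and large $k$, the row of $a$ in $e(T_k)$ is the row of the entry $\rank_{V_k}(a)$ of the common standard tableau $P(w_0u_kw_0)$, while its row in $S_k$ is the row of the entry $\rank_{V_k'}(a)$. I would compute these ranks directly: since the base window $(w_1,\dots,w_n)$ meets every residue class modulo $n$ exactly once, the residues $(a-w_i)\bmod n$ run over $\{0,\dots,n-1\}$, so for $k$ large one gets $\rank_{V_k}(a)=\sum_{i=1}^n\big\lfloor (a-w_i)/n\big\rfloor+n(k+1)=a-\tfrac{1}{n}\sum_i w_i-\tfrac{n-1}{2}+n(k+1)$, and the honesty hypothesis $\sum_{i=1}^n w_i=\tfrac{n(n+1)}{2}$ collapses this to $\rank_{V_k}(a)=a+nk$. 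The identical computation for $r(w)$ (also honest) gives $\rank_{V_k'}(a)=a+nk$. Thus $a$ occupies the same cell of $P(w_0u_kw_0)$ under both relabellings, so its rows in $e(T_k)$ and $S_k$ agree for all large $k$. As $(S_k)$ is periodic with limit $e(T)$, the same holds for $(e(T_k))$.

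I expect the main obstacle to be precisely this last reconciliation: the rotation $r$ complements values by the fixed constant $n+1$, whereas finite evacuation of $T_k$ complements by the growing window size $m+1=n(2k+1)+1$, so $e(T_k)$ and $S_k$ genuinely are different labelled tableaux. What rescues the argument is that $w$ is honest, which is exactly the condition that centers the value sets $V_k$ and $V_k'$ so that the two relabellings place every fixed integer in the same row asymptotically; without this normalization the computation above shows the limit would instead be a fixed cyclic rotation of $e(T)$.
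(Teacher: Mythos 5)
Your proof is correct, and its skeleton is the same as the paper's sketch: apply \cite[Thm.~7.3]{CPY} to $r(w)$ and compare $e(T_k)$ with $S_k = P\bigl((r(w))^{(k)}\bigr)$. Where the two differ is in how that comparison is justified, and your version is sharper in a way that matters. The paper asserts that $e(T_k)$ coincides with $S_k$ ``except for a bounded number of the smallest and largest values,'' attributing the discrepancy to the value set $V_k$ of $w^{(k)}$ not being an interval of integers; you instead prove an exact relabelling identity: $e(T_k)$ and $S_k$ share the same standardization, and for every fixed integer $a$ and all large $k$ the ranks of $a$ in $V_k$ and in $n+1-V_k$ are both equal to $a+nk$ --- a computation that genuinely uses $\sum_{i=1}^n w_i = \frac{n(n+1)}{2}$. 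Your closing remark is exactly right and pinpoints what the paper's parenthetical glosses over: the labelling discrepancy between $e(T_k)$ and $S_k$ is the constant $2s$, where $s = \frac{1}{n}\sum_{i=1}^n (w_i - i)$ is the shift of $w$, not merely an edge effect of non-interval value sets. Indeed, for $n=3$ and the extended permutation $w = [3,2,4]$ (shift $s=1$) one finds $P(w) = T$ with rows $\{\ol{1},\ol{2}\}$ and $\{\ol{3}\}$ and $e(T)$ with rows $\{\ol{1},\ol{3}\}$ and $\{\ol{2}\}$; here every $V_k$ \emph{is} an interval, yet $(e(T_k))$ converges to $\pr^{2}(e(T)) \neq e(T)$, whose rows are $\{\ol{2},\ol{3}\}$ and $\{\ol{1}\}$. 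So in the extended setting the proposition fails for ``any'' representative $w$, and the normalization you impose (working in the non-extended group) is necessary rather than cosmetic --- the paper's statement and proof implicitly assume it, notwithstanding the footnote claiming independence of the two settings. In short: same route, but your execution supplies the quantitative step the paper leaves vague, and identifies the precise hypothesis under which both the statement and the argument are valid.
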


\begin{example}
Suppose $T = \tableau[sY]{\ol{2}, \ol{4} \\ \ol{1} \\ \ol{3}}$.  We may choose $w = [1, 2, 0, 7]$, so that $w^{(0)} = (1, 2, 0, 7)$, $w^{(1)} = (-3, -2, -4, 3, 1, 2, 0, 7, 5, 6, 4, 11)$, 
and so on.  The insertion tableaux $T_0, T_1, T_2,\ldots$ for these words are 
\def\Tscale{1.3}
\[
\tableau[Y]{0, 2, 7 \\ 1} \; , \quad 
\tableau[Y]{-4, -2, 0, 2, 4, 6, 11 \\ -3, 1, 5 \\ 3, 7} \; , \quad
\tableau[Y]{-8, -6, -4, -2, 0, 2, 4, 6, 8, 10, 15 \\ -7, -3, 1, 5, 9 \\ -1, 3, 7, 11} \; , 
\quad \ldots \; ,
\]
with evacuations $e(T_0), e(T_1), e(T_2), \ldots$ being
\[
\tableau[Y]{0, 1, 2 \\ 7} \; , \quad
\tableau[Y]{-4, -3, 0, 2, 4, 6, 7 \\ -2, 3, 11 \\ 1, 5} \; , \quad
\tableau[Y]{-8, -7, -4, -2, 0, 2, 4, 6, 8, 10, 11 \\ -6, -1, 3, 7, 15 \\ -3, 1, 5, 9} \; , 
\quad \ldots.
\]
The latter sequence stabilizes and is periodic, with $i$ in row $j$ of $T_k$ for $k \gg 0$ if and only if $\ol{i}$ is in row $j$ of 
\[
e(T) = \tableau[sY]{\ol{2}, \ol{4} \\ \ol{3} \\ \ol{1}}.
\]
\end{example}

\begin{proof}[Proof sketch]
It follows from \cite[Thm.~7.3]{CPY} that the sequence $(T_0, T_1, \ldots)$ is periodic, and moreover that the row in which $i$ appears in the limit is the same as the row in which $\ol{i}$ appears in $T$.  Except for a bounded number of the smallest and largest values (corresponding to the fact that $\{w_{-in + 1}, \ldots, w_{in + n}\}$ is not necessarily an interval of integers), $e(T_k)$ coincides with $P(r(w^{(k)}))$.  But again by \cite[Thm.~7.3]{CPY}, the sequence $\left(P(r(w^{(k)}))\right)_{k \geq 0}$ is periodic, and for each integer $i$ the row in which $i$ appears in the limit is the same as the row in which $\ol{i}$ appears in $P(r(w)) = e(T)$.
\end{proof}

\begin{remark}
There is no particular reason to restrict the sequence of words of $w$ to include nested windows, as above: with more technical details, one could take any sequence of sequences $[w_{a_i}, w_{a_i + 1}, \ldots, w_{b_i}]$ with $a_i \to -\infty$ and $b_i \to \infty$ and get the same result.  For further possible generalizations, see \S\ref{sec:asymptotic remark}.
\end{remark}


\section{Enumeration of self-evacuating tabloids}
\label{sec:self-evacuating}
\label{sec: counting fixed points}

Given any group action on a set, it is natural to ask about the fixed points of the action.  In the case of evacuation, this is to ask about the \emph{self-evacuating} tableaux and tabloids.  In the finite case, these fixed points have a fascinating enumeration, and we begin this section by recounting it.  The rest of the section is devoted to the statement and proof of Theorem~\ref{main counting theorem}, our affine analogue, which gives the enumeration of self-evacuating tabloids of a given shape.

\subsection{Self-evacuating tableaux, domino tableaux, and $q = -1$}
\label{sec: q=-1}
When $|\lambda|$ is even, a \emph{domino tableau} of shape $\lambda$ is a division of the cells of the Young diagram of $\lambda$ into pairs of adjacent cells (``dominoes") that are numbered by the integers $1, \ldots, |\lambda|/2$ in such a way that for each $k$, the union of the dominoes numbered $1, \ldots, k$ is again a Young diagram.  For example, the three domino tableaux of shape $\langle 4, 2\rangle$ are
\[
\resizebox{.5\textwidth}{!}{\begin{picture}(0,0)%
\includegraphics{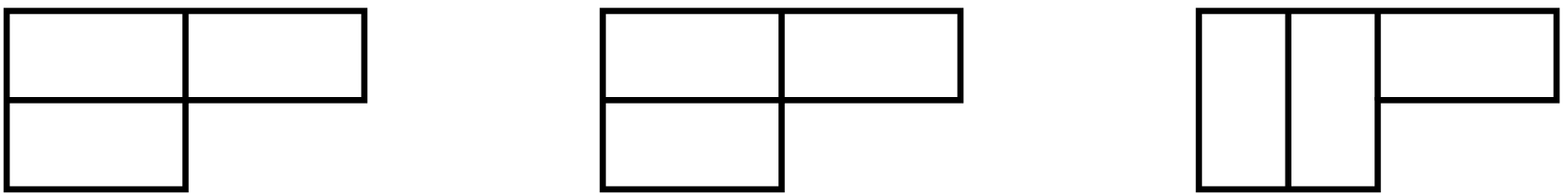}%
\end{picture}%
\setlength{\unitlength}{3947sp}%
\begingroup\makeatletter\ifx\SetFigFont\undefined%
\gdef\SetFigFont#1#2#3#4#5{%
  \reset@font\fontsize{#1}{#2pt}%
  \fontfamily{#3}\fontseries{#4}\fontshape{#5}%
  \selectfont}%
\fi\endgroup%
\begin{picture}(8090,966)(2968,-3844)
\put(10951,-3511){\makebox(0,0)[lb]{\smash{{\SetFigFont{29}{34.8}{\rmdefault}{\mddefault}{\updefault}{\color[rgb]{0,0,0}.}%
}}}}
\put(3451,-3691){\makebox(0,0)[b]{\smash{{\SetFigFont{20}{24.0}{\rmdefault}{\mddefault}{\updefault}{\color[rgb]{0,0,0}$3$}%
}}}}
\put(3451,-3241){\makebox(0,0)[b]{\smash{{\SetFigFont{20}{24.0}{\rmdefault}{\mddefault}{\updefault}{\color[rgb]{0,0,0}$1$}%
}}}}
\put(4351,-3241){\makebox(0,0)[b]{\smash{{\SetFigFont{20}{24.0}{\rmdefault}{\mddefault}{\updefault}{\color[rgb]{0,0,0}$2$}%
}}}}
\put(6451,-3241){\makebox(0,0)[b]{\smash{{\SetFigFont{20}{24.0}{\rmdefault}{\mddefault}{\updefault}{\color[rgb]{0,0,0}$1$}%
}}}}
\put(7351,-3241){\makebox(0,0)[b]{\smash{{\SetFigFont{20}{24.0}{\rmdefault}{\mddefault}{\updefault}{\color[rgb]{0,0,0}$3$}%
}}}}
\put(6451,-3691){\makebox(0,0)[b]{\smash{{\SetFigFont{20}{24.0}{\rmdefault}{\mddefault}{\updefault}{\color[rgb]{0,0,0}$2$}%
}}}}
\put(9226,-3466){\makebox(0,0)[b]{\smash{{\SetFigFont{20}{24.0}{\rmdefault}{\mddefault}{\updefault}{\color[rgb]{0,0,0}$1$}%
}}}}
\put(10351,-3241){\makebox(0,0)[b]{\smash{{\SetFigFont{20}{24.0}{\rmdefault}{\mddefault}{\updefault}{\color[rgb]{0,0,0}$3$}%
}}}}
\put(9676,-3466){\makebox(0,0)[b]{\smash{{\SetFigFont{20}{24.0}{\rmdefault}{\mddefault}{\updefault}{\color[rgb]{0,0,0}$2$}%
}}}}
\put(4951,-3511){\makebox(0,0)[lb]{\smash{{\SetFigFont{29}{34.8}{\rmdefault}{\mddefault}{\updefault}{\color[rgb]{0,0,0},}%
}}}}
\put(7951,-3511){\makebox(0,0)[lb]{\smash{{\SetFigFont{29}{34.8}{\rmdefault}{\mddefault}{\updefault}{\color[rgb]{0,0,0},}%
}}}}
\end{picture}%
}
\]
If $|\lambda|$ is odd, the definition is the same except that the single corner box forms a one-cell ``monomino"; for example, the three domino tableaux of shape $\langle 3, 3, 1\rangle$ are
\[
\resizebox{.5\textwidth}{!}{\begin{picture}(0,0)%
\includegraphics{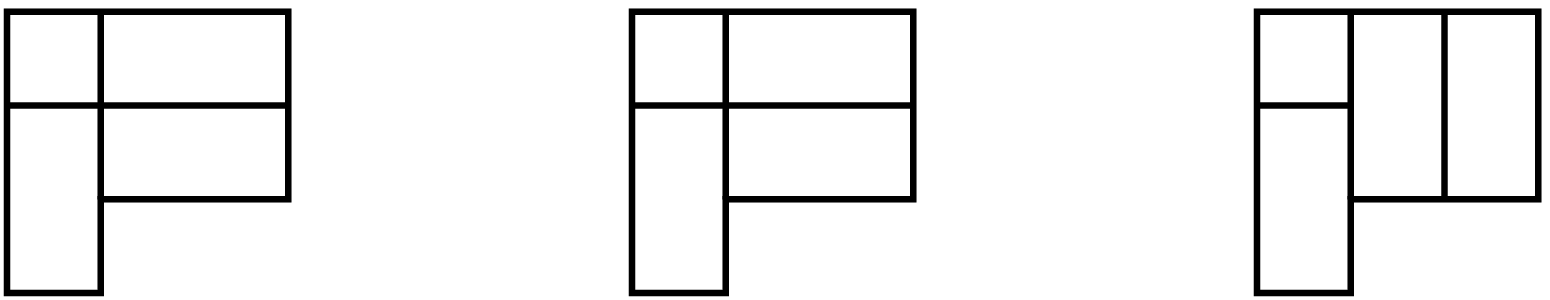}%
\end{picture}%
\setlength{\unitlength}{3947sp}%
\begingroup\makeatletter\ifx\SetFigFont\undefined%
\gdef\SetFigFont#1#2#3#4#5{%
  \reset@font\fontsize{#1}{#2pt}%
  \fontfamily{#3}\fontseries{#4}\fontshape{#5}%
  \selectfont}%
\fi\endgroup%
\begin{picture}(7640,1416)(2968,-5644)
\put(4501,-5311){\makebox(0,0)[lb]{\smash{{\SetFigFont{29}{34.8}{\rmdefault}{\mddefault}{\updefault}{\color[rgb]{0,0,0},}%
}}}}
\put(7501,-5311){\makebox(0,0)[lb]{\smash{{\SetFigFont{29}{34.8}{\rmdefault}{\mddefault}{\updefault}{\color[rgb]{0,0,0},}%
}}}}
\put(10501,-5311){\makebox(0,0)[lb]{\smash{{\SetFigFont{29}{34.8}{\rmdefault}{\mddefault}{\updefault}{\color[rgb]{0,0,0}.}%
}}}}
\put(3901,-5041){\makebox(0,0)[b]{\smash{{\SetFigFont{20}{24.0}{\rmdefault}{\mddefault}{\updefault}{\color[rgb]{0,0,0}$3$}%
}}}}
\put(6901,-5041){\makebox(0,0)[b]{\smash{{\SetFigFont{20}{24.0}{\rmdefault}{\mddefault}{\updefault}{\color[rgb]{0,0,0}$3$}%
}}}}
\put(9676,-4816){\makebox(0,0)[b]{\smash{{\SetFigFont{20}{24.0}{\rmdefault}{\mddefault}{\updefault}{\color[rgb]{0,0,0}$2$}%
}}}}
\put(10126,-4816){\makebox(0,0)[b]{\smash{{\SetFigFont{20}{24.0}{\rmdefault}{\mddefault}{\updefault}{\color[rgb]{0,0,0}$3$}%
}}}}
\put(6901,-4591){\makebox(0,0)[b]{\smash{{\SetFigFont{20}{24.0}{\rmdefault}{\mddefault}{\updefault}{\color[rgb]{0,0,0}$2$}%
}}}}
\put(6226,-5266){\makebox(0,0)[b]{\smash{{\SetFigFont{20}{24.0}{\rmdefault}{\mddefault}{\updefault}{\color[rgb]{0,0,0}$1$}%
}}}}
\put(3901,-4591){\makebox(0,0)[b]{\smash{{\SetFigFont{20}{24.0}{\rmdefault}{\mddefault}{\updefault}{\color[rgb]{0,0,0}$1$}%
}}}}
\put(6226,-4591){\makebox(0,0)[b]{\smash{{\SetFigFont{20}{24.0}{\rmdefault}{\mddefault}{\updefault}{\color[rgb]{0,0,0}$0$}%
}}}}
\put(9226,-5266){\makebox(0,0)[b]{\smash{{\SetFigFont{20}{24.0}{\rmdefault}{\mddefault}{\updefault}{\color[rgb]{0,0,0}$1$}%
}}}}
\put(9226,-4591){\makebox(0,0)[b]{\smash{{\SetFigFont{20}{24.0}{\rmdefault}{\mddefault}{\updefault}{\color[rgb]{0,0,0}$0$}%
}}}}
\put(3226,-5266){\makebox(0,0)[b]{\smash{{\SetFigFont{20}{24.0}{\rmdefault}{\mddefault}{\updefault}{\color[rgb]{0,0,0}$2$}%
}}}}
\put(3226,-4591){\makebox(0,0)[b]{\smash{{\SetFigFont{20}{24.0}{\rmdefault}{\mddefault}{\updefault}{\color[rgb]{0,0,0}$0$}%
}}}}
\end{picture}%
}
\]
The combinatorics of domino tableaux and their connection to representation theory are discussed further in \S\ref{sec:domino}.
  
The number $f^\lambda$ of standard Young tableaux of shape $\lambda$ is given by the \emph{hook-length formula} of Frame--Robinson--Thrall \cite{FRT}:
\[
f^\lambda = \frac{n!}{\prod_{c \in \lambda} h_c}
\]
where $c$ runs over the cells of the Young diagram of $\lambda$ and $h_c$ is the \emph{hook-length} of $c$, i.e., the number of boxes that are in the same row as $c$ and weakly to its right or in the same column and weakly below it.  This number has a natural \emph{$q$-analogue}
\[
f^\lambda(q) := \frac{[n]!_q}{\prod_{c \in \lambda} [h_c]_q},
\]
where for a nonnegative integer $k$ we define $[k]_q := 1 + q + \ldots + q^{k - 1}$ and $[k]!_q := [1]_q \cdot [2]_q \cdots [k]_q$, so that $f^\lambda(1) = f^\lambda$.  It is not clear from this definition, but in fact $f^\lambda(q)$ is a polynomial in $q$ whose coefficients are positive integers.  (It is the generating function for tableaux by a statistic called \emph{comaj} \cite{Krattenthaler}; see also \cite[p.~243]{Macdonald}.)

Let $\chi^\lambda$ denote the irreducible character of the symmetric group indexed by $\lambda$.  We will denote by $\chi^\lambda_\mu$ the result of evaluating $\chi^\lambda$ on a permutation of cycle type $\mu$.  Let $\rho_2(n)$ be the cycle type of $w_0$ in $\Symm_n$, that is, $\rho_2(n)$ is the partition $\langle 2^{n/2} \rangle$ if $n$ is even and $\langle 2^{(n - 1)/2}, 1 \rangle$ if $n$ is odd.  Finally, for a partition $\lambda$, let $b(\lambda) = \sum_i (i-1) \lambda_i$. 

The following theorem shows how these objects are bound together with fixed points of evacuation.

\begin{theorem}[Stembridge {\cite[Thm.~4.3]{Stembridge} and \cite[Thm.~3.1]{StanleyEvac}}]
\label{counting self-evacuating tableaux}
For any partition $\lambda$, there is a bijection between self-evacuating standard Young tableaux of shape $\lambda$ and domino tableaux of shape $\lambda$.  Moreover, the number of these tableaux is given by $(-1)^{b(\lambda)} \cdot \chi^\lambda_{\rho_2(n)} = f^\lambda(-1)$.
\end{theorem}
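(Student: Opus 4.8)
The plan is to route both equalities and the bijection through a single quantity, the number $d(\lambda)$ of domino tableaux of shape $\lambda$. I would separately establish $f^\lambda(-1) = d(\lambda)$, then $(-1)^{b(\lambda)}\chi^\lambda_{\rho_2(n)} = d(\lambda)$, and finally the existence of a bijection between self-evacuating standard Young tableaux of shape $\lambda$ and domino tableaux of shape $\lambda$; the three facts together yield the theorem. (In particular, when $\lambda$ has nontrivial $2$-core there are no domino tableaux, and one checks all three quantities vanish.)

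For $f^\lambda(-1) = d(\lambda)$, I would write $[k]_q = (1-q^k)/(1-q)$ so that the powers of $(1-q)$ cancel and
\[
f^\lambda(q) = \frac{\prod_{i=1}^n (1-q^i)}{\prod_{c \in \lambda}(1-q^{h_c})}.
\]
A factor $1-q^j$ vanishes at $q=-1$ exactly when $j$ is even, and there are $\lfloor n/2\rfloor$ even values of $i$ in $[n]$; since $f^\lambda(q)$ is a polynomial, the value $f^\lambda(-1)$ is nonzero precisely when $\lambda$ has $\lfloor n/2\rfloor$ cells of even hook length, i.e.\ when its $2$-core is trivial, which is exactly the condition $d(\lambda) \neq 0$. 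Evaluating the limit $q\to -1$ (using $1-q^{2m}\sim 2m(1+q)$ near $q=-1$), the odd exponents contribute equally many factors of $2$ to numerator and denominator and cancel, while the even exponents leave $\lfloor n/2\rfloor!$ divided by the product of the halved even hook lengths, which are exactly the hook lengths of the $2$-quotient of $\lambda$. By the ordinary hook-length formula applied to the $2$-quotient this equals $d(\lambda)$.

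For $(-1)^{b(\lambda)}\chi^\lambda_{\rho_2(n)} = d(\lambda)$, I would apply the Murnaghan--Nakayama rule at $\rho_2(n) = \langle 2^{\lfloor n/2\rfloor},1^{\,n\bmod 2}\rangle$. As every part equals $2$, the character becomes a signed sum over successive removals of $2$-ribbons (dominoes, with a terminal monomino when $n$ is odd): $\chi^\lambda_{\rho_2(n)} = \sum_D (-1)^{\mathrm{ht}(D)}$, the sum over domino tableaux $D$ of shape $\lambda$, where $\mathrm{ht}(D)$ is the number of vertical dominoes. The crux is a sign-invariance lemma, that $\mathrm{ht}(D)\equiv b(\lambda)\pmod 2$ for \emph{every} such $D$. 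I would prove this by observing that the underlying tilings of any two domino tableaux of a (simply connected) Young diagram are connected by $2\times2$ flips, each of which changes the number of vertical dominoes by $2$ and hence preserves its parity, and then fixing the parity by evaluating it on one explicit tiling. With the sign constant and equal to $(-1)^{b(\lambda)}$, the signed sum collapses to $(-1)^{b(\lambda)}d(\lambda)$.

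The bijection is the main obstacle and carries the genuine combinatorial content. I would build it from Schützenberger's growth-diagram (jeu-de-taquin) description of evacuation: a standard Young tableau $T$ is encoded by its saturated chain $\emptyset = \lambda^{(0)}\lessdot\cdots\lessdot\lambda^{(n)} = \lambda$ of shapes, and $e(T)=T$ is equivalent to a central symmetry of this chain, in which $\lambda^{(k)}$ is the $180^\circ$ complement of $\lambda^{(n-k)}$ inside $\lambda$. This symmetry lets one fold the chain about its center and read off the successive skew shapes as dominoes (with a central monomino when $n$ is odd), producing a domino tableau, while unfolding recovers $T$. The delicate point, which I expect to be the hardest step, is to verify that the folded skew shapes are genuinely connected dominoes rather than disjoint pairs of cells; a naive pairing of the entries $2k-1,2k$ fails already for $\lambda = \langle 3,3\rangle$, so the construction must be organized around the central symmetry of the chain, and one must check with care that it is well defined and bijective.
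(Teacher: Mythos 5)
Your proposal does not prove the theorem: the two enumerative identities (your steps 1 and 2) are essentially sound, but the bijection step---which is the actual content of the statement, and the only part that involves evacuation at all---rests on a false claim. Note also that this is a background theorem in the paper: the authors do not prove it but cite Stembridge and Stanley for the enumeration and van Leeuwen \cite{vanL} for the bijection, so the comparison here is against the known proofs rather than an internal one.

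The false claim is your characterization of self-evacuating tableaux: that $e(T)=T$ is equivalent to the chain $\emptyset=\lambda^{(0)}\lessdot\cdots\lessdot\lambda^{(n)}=\lambda$ satisfying ``$\lambda^{(k)}$ is the $180^\circ$ complement of $\lambda^{(n-k)}$ inside $\lambda$.'' This is meaningful (and true) only when $\lambda$ is a rectangle; for general $\lambda$ the rotated complement of a subdiagram need not be a Young diagram at all. Concretely, take $\lambda=\langle 3,1\rangle$. The unique self-evacuating tableau has rows $1\,2\,4$ and $3$, with chain $\emptyset,\langle1\rangle,\langle2\rangle,\langle2,1\rangle,\langle3,1\rangle$; the complement of $\lambda^{(2)}=\langle2\rangle$ inside $\lambda$ is the disconnected pair of cells $\{(1,3),(2,1)\}$, which is not the shape $\langle 2\rangle$ under any $180^\circ$ rotation. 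So your symmetry condition fails for a tableau that \emph{is} fixed by evacuation, and conversely nothing in your setup produces a partition chain to fold. The underlying issue is that evacuation is not computable from the chain of shapes by complementation; it requires the full triangular growth diagram. The correct statement is that $e(T)=T$ if and only if that two-dimensional growth array is symmetric about its central axis, and van Leeuwen's bijection reads the domino tableau off the \emph{diagonal} entries of the symmetric triangle, using the jeu-de-taquin local rules to prove that consecutive diagonal shapes differ by dominoes (with a monomino at the start when $n$ is odd). That analysis of the local rules is the genuinely hard step, and without it your argument never connects the fixed-point count to anything: steps 1 and 2 alone give only $f^\lambda(-1)=(-1)^{b(\lambda)}\chi^\lambda_{\rho_2(n)}=d(\lambda)$, where $d(\lambda)$ is the number of domino tableaux, not the assertion about self-evacuating tableaux.

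Two smaller repairs to the parts that do work. In step 1, the last equality (``$\lfloor n/2\rfloor!$ divided by the halved even hook lengths equals $d(\lambda)$'') is not just the hook-length formula: you need the Littlewood/Stanton--White correspondence identifying domino tableaux of shape $\lambda$ with shuffled pairs of standard tableaux on the $2$-quotient, i.e.\ $d(\lambda)=\binom{\lfloor n/2\rfloor}{|\lambda^{(0)}|}f^{\lambda^{(0)}}f^{\lambda^{(1)}}$; this should be stated and cited. In step 2, your sign lemma needs neither flip-connectivity nor the unperformed evaluation on an explicit tiling: since $b(\lambda)\equiv\sum_{i\text{ even}}\lambda_i\pmod 2$, and every horizontal domino contributes an even number of cells to even-indexed rows while every vertical domino contributes exactly one, the number of vertical dominoes in \emph{any} tiling is congruent to $b(\lambda)$ mod $2$ directly.
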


This theorem is an example of the \emph{$q = -1$ phenomenon} (and more generally the \emph{cyclic sieving phenomenon}), whereby a natural enumerating polynomial for a set gives, upon substitution of $-1$ (or a root of unity) for the variable, the number of fixed points of the set under a natural involution (or cyclic action).

\subsection{Self-evacuating tabloids}
In this section, we state our main enumeration theorem: it is an affine analogue of Theorem~\ref{counting self-evacuating tableaux}, giving the enumeration of self-evacuating tabloids of a given shape $\lambda$.  We begin with some background definitions necessary for the statement, following \cite[Ch.~III]{Macdonald}.

\subsubsection{Kostka--Foulkes and Green's polynomials}
\label{sec: KF&G}

Let $\La$ be the ring of symmetric functions in the variables $x = (x_1, x_2, \ldots)$ and let $t$ be an indeterminate. The \emph{Hall--Littlewood functions} $P_\la(x;t)$ form a basis of the polynomial ring $\La[t]$. These functions interpolate between monomial symmetric functions and Schur functions, in the sense that $P_\la(x;1) = m_\la(x)$ and $P_\la(x;0) = s_\la(x)$. The \emph{Kostka--Foulkes polynomials} $K_{\la \mu}(t)$ are defined to be the coefficients of the Schur functions in the Hall--Littlewood basis:
\begin{equation}
\label{eq: K}
s_\la(x) = \sum_\mu K_{\la \mu}(t) P_\mu(x;t).
\end{equation}
Setting $t = 1$, we have $s_\la(x) = \sum_\mu K_{\la \mu}(1) m_\mu(x)$, and so $K_{\la \mu}(1) = |\SSYT(\la,\mu)|$. In fact, it was shown by Lascoux and Sch\"utzenberger \cite{LScharge, SchutzCharge} that $K_{\la \mu}(t)$ is the generating function over $T \in \SSYT(\la, \mu)$ with respect to a statistic $c(T)$ called \emph{charge}.\footnote{There are several ways to compute charge: in addition to the original references mentioned above, see \cite[\S III.6]{Macdonald} and \cite[\S 5.4]{ShimozonoDummies}. The latter reference gives an algorithm for \emph{cocharge} that applies even when the content $\mu$ is not a partition. (For $T \in \SSYT(\la, \mu)$, the cocharge of $T$ is equal to $\sum_{i < j} \min(\mu_i, \mu_j) - c(T)$.)}


Similarly, define the polynomials $X^\mu_\rho(t)$ to be the coefficients of the power sum symmetric functions in the Hall--Littlewood basis:
\begin{equation}
\label{eq: X}
p_\rho(x) = \sum_\mu X^\mu_\rho(t) P_\mu(x;t).
\end{equation}
Setting $t = 0$ gives $p_\rho(x) = \sum_\mu X^\mu_\rho(0) s_\mu(x)$, so $X^\mu_\rho(0) = \chi^\mu_\rho$ is the value of the irreducible symmetric group character $\chi^\mu$ on a permutation of cycle type $\rho$. Furthermore, it follows from \eqref{eq: K} and \eqref{eq: X} that
\begin{equation}
\label{eq: X in terms of K}
X^\mu_\rho(t) = \sum_\la \chi^\la_\rho K_{\la \mu}(t).
\end{equation}
The polynomial $X^\mu_\rho(t)$ has degree $b(\mu)$, and the \emph{Green's polynomial} $\Green^\mu_\rho(q)$ is defined by
\[
\Green^\mu_\rho(q) = q^{b(\mu)} X^\mu_\rho(q^{-1}).
\]
Green's polynomials were originally defined by Green in his study \cite{Green} of the representation theory of the finite general linear groups.

\subsubsection{Statement of the main result}
\label{sec: fixed point statement}

We now state our main enumerative theorem, giving an affine analogue of Theorem~\ref{counting self-evacuating tableaux}.

\begin{theorem}
\label{main counting theorem}
Suppose that $\lambda = \langle \lambda_1^{m_1}, \lambda_2^{m_2}, \ldots, \lambda_k^{m_k} \rangle$ is a partition of $n$ having $k$ distinct part-sizes.  For $1 \leq i \leq k$, let $\lambda\mathord{\downarrow}^{(\lambda_i)}_{(\lambda_i - 2)}$ be the partition formed by replacing a part of $\lambda$ of size $\lambda_i$ with one of size $\lambda_i - 2$, and let $\lambda\mathord{\downarrow}^{(\lambda_i, \lambda_i)}_{(\lambda_i - 1, \lambda_i - 1)}$ be the partition formed by replacing two parts of $\lambda$ of size $\lambda_i$ with two parts of size $\lambda_i - 1$. Then for $n \geq 2$, the number $t(\lambda)$ of self-evacuating tabloids of shape $\lambda$ satisfies the recurrence relation
\[
t(\lambda) = \sum_{\substack{i \colon \lambda_i \geq 2, \\ m_i \textrm{ is odd}}} t\left(\lambda\mathord{\downarrow}^{(\lambda_i)}_{(\lambda_i - 2)}\right) + \sum_{i=1}^k 2\left\lfloor \frac{m_i}{2} \right\rfloor \cdot t\left(\lambda\mathord{\downarrow}^{(\lambda_i, \lambda_i)}_{(\lambda_i - 1, \lambda_i - 1)}\right).
\]
Moreover, $t(\lambda)$ is given by the evaluation of a Green's polynomial at $q = -1$:
\begin{equation}
\label{eq: Green eval}
t(\lambda) = \Green^\lambda_{\rho_2(n)}(-1).
\end{equation}
\end{theorem}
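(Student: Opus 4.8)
The plan is to transport the fixed-point problem through RSK, split it into two independent $q=-1$ phenomena, and then recognize the resulting alternating sum of characters as a Green's polynomial evaluation. Write $d = \ell(\lambda)$. By the restriction of RSK to $\mathcal{T}(\lambda)$ recalled in \S\ref{sec: evac and RSK}, a tabloid $T$ of shape $\lambda$ is sent to a pair $(P,Q) \in \SYT(\nu) \times \SSYT(\nu, w_0(\lambda))$ for some $\nu \vdash n$, and Proposition~\ref{prop: alt alg for e} identifies $e(T)$ with the pair $(e_n(P), e_d^*(Q))$. Hence $T$ is self-evacuating if and only if $P = e_n(P)$ and $Q = e_d^*(Q)$; since these two conditions are independent once $\nu$ is fixed, the count factorizes as
\[
t(\lambda) \;=\; \sum_{\nu \vdash n} a_\nu\, b_\nu,
\qquad
a_\nu = \#\{P \in \SYT(\nu) : e_n(P) = P\},
\quad
b_\nu = \#\{Q \in \SSYT(\nu, w_0(\lambda)) : e_d^*(Q) = Q\}.
\]

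The first factor is controlled by the finite theory: Theorem~\ref{counting self-evacuating tableaux} gives $a_\nu = f^\nu(-1) = (-1)^{b(\nu)}\chi^\nu_{\rho_2(n)}$. The second factor is a $q=-1$ phenomenon for semistandard tableaux with respect to the charge statistic. First I would record that $K_{\nu, w_0(\lambda)}(t) = K_{\nu\lambda}(t)$ by the standard invariance of Kostka--Foulkes polynomials under rearranging the content (realized concretely by the content-permuting, charge-preserving $\Symm_d$-action of Definition--Proposition~\ref{si action}). The map $e_d^*$ is precisely the content-preserving involution whose fixed points realize the specialization $K_{\nu\lambda}(-1) = \sum_Q (-1)^{c(Q)}$; the essential input, supplied by the evaluation of Kostka--Foulkes polynomials at $q=-1$ (Theorem~\ref{thm: KF at -1}), is that the $e_d^*$-fixed tableaux all carry a single charge parity, yielding the sign-precise identity
\[
b_\nu \;=\; (-1)^{b(\nu) + b(\lambda)}\, K_{\nu\lambda}(-1).
\]
I have checked this identity directly for $\lambda = \langle 2 \rangle$ and $\lambda = \langle 1,1\rangle$, where the correcting sign is exactly what reconciles (for instance) $K_{\langle 2\rangle,\langle 1,1\rangle}(-1) = -1$ with a positive fixed-point count.

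Assembling the two evaluations, the two copies of $(-1)^{b(\nu)}$ cancel and the surviving sign is constant in $\nu$:
\[
t(\lambda)
= \sum_{\nu \vdash n} (-1)^{b(\nu)} \chi^\nu_{\rho_2(n)} \cdot (-1)^{b(\nu) + b(\lambda)} K_{\nu\lambda}(-1)
= (-1)^{b(\lambda)} \sum_{\nu \vdash n} \chi^\nu_{\rho_2(n)} K_{\nu\lambda}(-1).
\]
By \eqref{eq: X in terms of K} the inner sum is $X^\lambda_{\rho_2(n)}(-1)$, so $t(\lambda) = (-1)^{b(\lambda)} X^\lambda_{\rho_2(n)}(-1) = \Green^\lambda_{\rho_2(n)}(-1)$ from the definition $\Green^\lambda_{\rho_2(n)}(q) = q^{b(\lambda)} X^\lambda_{\rho_2(n)}(q^{-1})$, which is \eqref{eq: Green eval}. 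For the recurrence I would lean again on the explicit (rigged-configuration) evaluation of $K_{\nu\lambda}(-1)$ in Theorem~\ref{thm: KF at -1}. Substituted into the character sum above, its branching structure splits according to how dominoes interact with the part-multiplicities of $\lambda$: an odd-multiplicity part of size $\lambda_i \geq 2$ accounts for the terms that shrink one part by $2$, while a pair of equal parts accounts for the terms that shrink two parts by $1$, the factor $2\lfloor m_i/2\rfloor$ counting the available pairings together with a domino orientation. Matching these contributions to the stated coefficients is the most computational piece and is where the ``domino-like'' flavor becomes explicit.

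The main obstacle throughout is the sign-precise identity $b_\nu = (-1)^{b(\nu)+b(\lambda)} K_{\nu\lambda}(-1)$: one must both show that the $e_d^*$-fixed semistandard tableaux realize the $q=-1$ specialization of charge and pin down the exact charge parity of those fixed points. This is exactly the content of Theorem~\ref{thm: KF at -1}, proved via rigged configurations, and it is what makes all of the signs telescope into a single global factor $(-1)^{b(\lambda)}$; without the precise parity statement, one obtains only $t(\lambda) = \sum_\nu \pm\,\chi^\nu_{\rho_2(n)}K_{\nu\lambda}(-1)$ with uncontrolled signs, which would not collapse to a Green's polynomial.
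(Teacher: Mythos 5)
Your derivation of the Green's polynomial evaluation \eqref{eq: Green eval} is correct and is essentially the paper's own argument: factor the fixed-point count through RSK via Proposition~\ref{prop: alt alg for e}, evaluate the standard-tableau factor by Theorem~\ref{counting self-evacuating tableaux} and the semistandard factor by Theorem~\ref{thm: KF at -1}, then recognize $\sum_{\nu} \chi^\nu_{\rho_2(n)} K_{\nu\lambda}(-1)$ as $X^\lambda_{\rho_2(n)}(-1)$ via \eqref{eq: X in terms of K}. The differences are cosmetic: you phrase Theorem~\ref{thm: KF at -1} in terms of the charge Kostka--Foulkes polynomial, i.e.\ $b_\nu = (-1)^{b(\nu)+b(\lambda)}K_{\nu\lambda}(-1)$, which is exactly the paper's $(-1)^{b(\nu)}\tw{K}_{\nu\lambda}(-1)$ after the substitution $\tw{K}_{\nu\lambda}(-1) = (-1)^{b(\lambda)}K_{\nu\lambda}(-1)$; and your explicit handling of the content $w_0(\lambda)$ versus $\lambda$ via the $\Symm_d$-action is fine (the paper glosses over the same point).

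The recurrence, however, is a genuine gap in your proposal. What you offer --- substitute the fermionic formula \eqref{eq: fermionic formula} into the character sum and assert that ``its branching structure splits according to how dominoes interact with the part-multiplicities of $\lambda$'' --- is a plan, not an argument, and carrying it out would be substantial. In \eqref{eq: fermionic formula} the tabloid shape $\lambda$ plays the role of the content and enters only through the vacancy numbers $P_r^{(1)}$, so removing a horizontal or vertical domino from $\lambda$ perturbs the set of admissible configurations and every $q$-binomial factor simultaneously; on top of this one must sum the resulting identities against $\chi^\nu_{\rho_2(n)}$ over all shapes $\nu \vdash n$ and match them against character sums for partitions of $n-2$ evaluated at $\rho_2(n-2)$. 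Nothing in your sketch explains why the horizontal terms occur only when $m_i$ is odd, or where the coefficient $2\lfloor m_i/2 \rfloor$ comes from. The paper does not take this route at all: it deduces the recurrence directly from \eqref{eq: Green eval} by citing a recurrence for Green's polynomial evaluations at $q=-1$ due to the third-named author \cite[Prop.~10.3]{KimSpringer}, in which, for the relevant parameter value, the third sum is vacuous and all signs equal $+1$. Without that external input --- or a genuinely new combinatorial argument, which the paper itself says it could not find beyond special cases (see \S\ref{sec:combinatorial proofs} and \S\ref{sec:domino}) --- the recurrence in Theorem~\ref{main counting theorem} remains unproven in your write-up.
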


We make some remarks on this result here; its proof is deferred to the following section.

\begin{remark}
Although we do not have an affine analogue of domino tableaux, it is tempting to view the first sum in the recurrence relation as removing a horizontal domino (i.e., two cells in a single row) from $\lambda$, while the second sum removes a vertical domino (i.e., two cells in adjacent rows).  The recurrence becomes particularly nice when all parts of $\lambda$ are distinct (all multiplicities are $1$), as in the next corollary.
\end{remark}

\begin{corollary}
\label{distinct parts corollary}
Suppose $\lambda = \langle \lambda_1, \ldots, \lambda_k \rangle$ is a partition with distinct parts. For $1 \leq i \leq k$, let $\lambda\mathord{\downarrow}^{(\lambda_i)}_{(\lambda_i - 2)}$ be the partition formed by replacing a part of $\lambda$ of size $\lambda_i$ with one of size $\lambda_i - 2$. Then
\[
t(\lambda) = \sum_{i\colon \lambda_i \geq 2} t\left(\lambda\mathord{\downarrow}^{(\lambda_i)}_{(\lambda_i - 2)}\right).
\]
\end{corollary}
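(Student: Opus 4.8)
The plan is to obtain this corollary as an immediate specialization of the recurrence in Theorem~\ref{main counting theorem}, so no independent argument is needed beyond reading off what that recurrence says when $\lambda$ has distinct parts. Writing $\lambda = \langle \lambda_1^{m_1}, \ldots, \lambda_k^{m_k}\rangle$ as in the theorem, the hypothesis that all parts are distinct is precisely the statement that $m_i = 1$ for every $i \in \{1, \ldots, k\}$. I would therefore simply substitute $m_i = 1$ into the two sums appearing in Theorem~\ref{main counting theorem} and simplify.

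For the first sum, the summation condition requires $\lambda_i \geq 2$ together with $m_i$ odd; since $m_i = 1$ is odd for every index, this condition reduces to $\lambda_i \geq 2$ alone, and the first sum becomes exactly $\sum_{i\colon \lambda_i \geq 2} t\!\left(\lambda\mathord{\downarrow}^{(\lambda_i)}_{(\lambda_i - 2)}\right)$, which is the desired right-hand side. For the second sum, each summand carries the coefficient $2\left\lfloor \frac{m_i}{2} \right\rfloor = 2\left\lfloor \frac{1}{2} \right\rfloor = 0$, so the entire second sum vanishes. Combining these two observations yields the claimed identity.

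There is essentially no obstacle here: the substantive work is entirely contained in the proof of Theorem~\ref{main counting theorem}, and the corollary is a pure specialization. The only point worth a moment's attention is the boundary behavior of the operation $\lambda \mapsto \lambda\mathord{\downarrow}^{(\lambda_i)}_{(\lambda_i - 2)}$ when $\lambda_i = 2$, in which case ``replacing a part of size $2$ by a part of size $0$'' means deleting that part; but this convention is already fixed by the statement of Theorem~\ref{main counting theorem} and so requires no new justification. Thus the proof amounts to the single sentence that setting all multiplicities equal to $1$ turns the general recurrence into the stated one.
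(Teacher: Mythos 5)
Your proposal is correct and matches the paper's treatment exactly: the corollary is stated as an immediate specialization of Theorem~\ref{main counting theorem}, obtained by setting every multiplicity $m_i = 1$, which makes the first sum's parity condition vacuous and kills the second sum via the coefficient $2\lfloor 1/2 \rfloor = 0$. Your remark about the boundary case $\lambda_i = 2$ is a reasonable point of care, and it is handled the same way in the general theorem.
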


\begin{remark}
It follows easily from the results of \S\ref{sec:R-matrix} that
for any composition $\la$, the number of self-evacuating tabloids of shape $\la$ is equal to $t(\sort(\la))$, where $\sort(\la)$ is the partition obtained by sorting the parts of $\la$ into decreasing order.
\end{remark}

\begin{remark}
The evaluation of the Green's polynomial in \eqref{eq: Green eval} is not an instance of the $q = -1$ phenomenon, in two respects: first, Green's polynomials have both positive and negative coefficients and so do not count tabloids by a statistic.  Second, by \cite[\S III.7, Ex.~6]{Macdonald}, one has that the Green's polynomial evaluation
$\Green^\la_{\langle 1^n \rangle}(1)$ (with lower index partition $\langle 1^n \rangle$) produces the number $\frac{|\la|!}{\la_1! \cdot \la_2! \cdots} = |\mathcal{T}(\la)|$ of tabloids of shape $\la$, while $\Green^\la_{\rho_2(n)}(1)$ is a smaller number.
%
\end{remark}

\subsection{Proof of 
the main theorem}

In this section, we prove our main enumerative result, Theorem~\ref{main counting theorem}.

In \cite[Prop.~10.3]{KimSpringer}, the third-named author gives a recurrence relation for Green's polynomial evaluations at $-1$.  In our notation, his result says that
\[
\Green^\lambda_{\rho_2(n)}(-1) =   
\sum_{i=1}^k 2\left\lfloor \frac{m_i}{2} \right\rfloor \cdot \Green^{\lambda\mathord{\downarrow}^{(\lambda_i, \lambda_i)}_{(\lambda_i - 1, \lambda_i - 1)}}_{\rho_2(n-2)}(-1)
+ 
\sum_{\substack{i \colon \lambda_i \geq 2, \\ m_i \textrm{ is odd}}} \left(\textrm{a sign}\right) \Green^{\lambda\mathord{\downarrow}^{(\lambda_i)}_{(\lambda_i - 2)}}_{\rho_2(n-2)}(-1)
+
\textrm{a third sum}.
\]
In our setting (which involves setting the parameter called $k$ in the reference equal to $1$), the third term is vacuous and the sign in the second term is always $+1$. Thus, the recurrence in Theorem~\ref{main counting theorem} follows from the Green's polynomial evaluation \eqref{eq: Green eval}.

It remains to prove \eqref{eq: Green eval}. We first reduce this result to an evaluation of the Kostka--Foulkes polynomials at $q = -1$. Let $\mu$ be a partition of $n$ with exactly $d$ nonzero parts. From the discussion in \S\ref{sec: KF&G}, we have
\begin{equation*}
\Green^\mu_{\rho_2(n)}(q) = \sum_{\la} \chi^\la_{\rho_2(n)} \tw{K}_{\la \mu}(q),
\end{equation*}
where
\begin{equation}
\label{eq: defn cocharge KF}
\tw{K}_{\la \mu}(q) = \sum_{T \in {\rm SSYT}(\la,\mu)} q^{b(\mu) - c(T)}
\end{equation}
is the \emph{cocharge Kostka--Foulkes polynomial}. Note that the upper index of the Green's polynomial is now $\mu$ rather than $\la$. 

Let $u(\la)$ be the number of standard Young tableaux of shape $\la$ fixed by evacuation, and let $v(\la, \mu)$ be the number of semistandard Young tableaux of shape $\la$ and content $\mu$ that are fixed by the map $e^*_d$ defined in \S \ref{sec: evac and RSK}. Proposition \ref{prop: alt alg for e} implies that
\begin{equation*}
t(\mu) = \sum_{\la} u(\la)v(\la,\mu).
\end{equation*}
By Theorem~\ref{counting self-evacuating tableaux}, we have $u(\lambda) = (-1)^{b(\lambda)} \chi^\lambda_{\rho_2(n)}$.  
Thus, Theorem \ref{main counting theorem} is an immediate consequence of the following result. 



\begin{theorem}
\label{thm: KF at -1}
The number of elements of ${\rm SSYT}(\la,\mu)$ fixed by $e^*_d$ (where $d$ is the number of parts of $\mu$) is given by
\[
v(\la,\mu) = (-1)^{b(\la)} \tw{K}_{\la \mu}(-1).
\]
\end{theorem}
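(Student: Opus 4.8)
The plan is to read Theorem~\ref{thm: KF at -1} as an instance of the $q=-1$ phenomenon and to prove it through the theory of rigged configurations, which simultaneously linearizes the cocharge statistic and puts the involution $e_d^*$ into a transparent form. Writing $\tw K_{\la\mu}(q)=\sum_{T\in\SSYT(\la,\mu)}q^{b(\mu)-c(T)}$ as a cocharge generating function, the first step is to pass to the Kerov--Kirillov--Reshetikhin side: there is a bijection between $\SSYT(\la,\mu)$ and the admissible rigged configurations of type $\la$ and content $\mu$, under which charge decomposes as $c(T)=c_0(\nu)+|J|$, where $\nu$ is the bare configuration, $c_0(\nu)$ a quadratic statistic depending only on $\nu$, and $|J|$ the sum of all riggings. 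Grouping the riggings of equal-length rows and summing over them gives the fermionic formula
\[
K_{\la\mu}(q)=\sum_{\nu}q^{c_0(\nu)}\prod_{a,i}\qbin{p_i^{(a)}+m_i^{(a)}}{m_i^{(a)}}{q},
\]
where $m_i^{(a)}$ counts the rows of length $i$ in the $a$-th partition of $\nu$, $p_i^{(a)}$ is the corresponding vacancy number, and the Gaussian binomial records the admissible riggings. The cocharge polynomial is then $\tw K_{\la\mu}(q)=q^{b(\mu)}K_{\la\mu}(q^{-1})$.

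The key structural step is to identify what $e_d^*=w_0\circ e_d$ becomes on the rigged-configuration side. I would aim to show that it corresponds to \emph{complementation of riggings}: it fixes the underlying configuration $\nu$ and replaces each rigging $J$ on a row of length $i$ in the $a$-th partition by $p_i^{(a)}-J$. This is the rigged-configuration avatar of the Lusztig involution, and the work here is to match it precisely against the definition of $e_d^*$ in terms of reverse-complementing the reading word, reinserting under RSK, and applying the longest element of $\Symm_d$ (Definition~\ref{def: e_d}). Granting this identification, a tableau is fixed by $e_d^*$ exactly when, in each group of equal-length rows of each partition of $\nu$, the multiset of riggings is invariant under $J\mapsto p_i^{(a)}-J$.

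With the involution in this form the count is immediate. The riggings of the $m_i^{(a)}$ rows of length $i$ form a partition inside a box of size $m_i^{(a)}\times p_i^{(a)}$, and the self-complementary such partitions are governed by the classical $q=-1$ phenomenon for Gaussian binomials; their number is exactly $\qbin{p_i^{(a)}+m_i^{(a)}}{m_i^{(a)}}{-1}$, a nonnegative integer that vanishes precisely when $m_i^{(a)}$ and $p_i^{(a)}$ are both odd. Multiplying over row-groups and summing over configurations yields
\[
v(\la,\mu)=\sum_{\nu}\prod_{a,i}\qbin{p_i^{(a)}+m_i^{(a)}}{m_i^{(a)}}{-1}.
\]
On the other hand, since $(-1)^{-1}=-1$ the evaluation of the fermionic formula at $q=-1$ gives $\tw K_{\la\mu}(-1)=(-1)^{b(\mu)}K_{\la\mu}(-1)$, and hence
\[
(-1)^{b(\la)}\tw K_{\la\mu}(-1)=\sum_{\nu}(-1)^{b(\la)+b(\mu)+c_0(\nu)}\prod_{a,i}\qbin{p_i^{(a)}+m_i^{(a)}}{m_i^{(a)}}{-1}.
\]
Matching this term-by-term against the expression for $v(\la,\mu)$ reduces the entire theorem to the parity identity $c_0(\nu)\equiv b(\la)+b(\mu)\pmod 2$ for every configuration $\nu$ contributing a nonzero product; here one may use the defining constraint that $p_i^{(a)}$ is even whenever $m_i^{(a)}$ is odd (the condition for the product to be nonzero), and verify the congruence directly from the explicit quadratic form for $c_0(\nu)$.

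The main obstacle is the structural identification of the second paragraph: proving that $e_d^*$, as defined via reading words and RSK, genuinely is rigging complementation under the KKR bijection, in exactly the content-preserving, shape-$\la$ normalization we need. This is where the real interaction between evacuation and rigged configurations lives; once it is in hand, the enumerative matching is the Gaussian-binomial $q=-1$ phenomenon and the remaining parity congruence is a finite computation on the configuration statistic. A secondary caution is bookkeeping the charge/cocharge normalization so that the global sign $(-1)^{b(\la)}$ emerges correctly, which the $\la=\mu$ and standard-tableau cases can be used to calibrate.
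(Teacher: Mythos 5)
Your proposal follows essentially the same route as the paper's proof: the Kirillov--Reshetikhin bijection, the identification of $e_d^*$ with complementation of riggings, the fermionic formula, the self-complementary-partition interpretation of $\qbin{m+p}{m}{-1}$, and a parity congruence for the configuration statistic are precisely the ingredients used (the last one is Lemma~\ref{lem: parity of cc}, and your charge-normalized congruence $c_0(\nu)\equiv b(\la)+b(\mu)$ is equivalent to it under the hypothesis that every $m_r^{(k)}P_r^{(k)}$ is even). The structural identification you single out as the main obstacle is exactly Theorem~\ref{thm: KSS}(1), which the paper establishes in the Appendix by matching the KKR algorithm against results of Kirillov--Schilling--Shimozono.
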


The rest of this section is devoted to the proof of Theorem \ref{thm: KF at -1}. The key tool is Kirillov and Reshetikhin's bijection between semistandard tableaux and rigged configurations, which sheds light on the relationship between the charge of a tableau and its image under $e_d^*$.

\begin{remark}
In contrast to \eqref{eq: Green eval}, Theorem \ref{thm: KF at -1} is an example of the $q=-1$ phenomenon. This result was essentially already known in the case where all part multiplicities of $\mu$ are even. Indeed, using the discussion in \S \ref{sec: evac and RSK}, it is not difficult to deduce that $e_d^*$ commutes with the action of $\Symm_d$. Hence, if $\mu = \langle 1^{2m_1}, 2^{2m_2}, \ldots \rangle$, then the number of fixed points of $e_d^*$ on $\SSYT(\la, \mu)$ is equal to the number of fixed points of $e_d^*$ on $\SSYT(\la, \tw{\mu})$, where $\tw{\mu} = \langle 1^{m_1}, 2^{m_2}, \ldots, 2^{m_2}, 1^{m_1} \rangle$. Since the composition $\tw{\mu}$ is fixed by $w_0$, $e_d^*$ reduces to $e_d$ on $\SSYT(\la, \tw{\mu})$ by Definition-Proposition~\ref{si action}. Stembridge showed that the fixed points of evacuation on $\SSYT(\la, \tw{\mu})$ are in bijection with \emph{semistandard domino tableaux} of content $\langle 1^{m_1}, 2^{m_2}, \ldots \rangle$ \cite[Cor.~4.2]{Stembridge}. By results of Lascoux, Leclerc, and Thibon, the number of such domino tableaux is $(-1)^{b(\la) + b(\mu)}K_{\la \mu}(-1) = (-1)^{b(\la)}\tw{K}_{\la \mu}(-1)$ (see \cite[Thm.~9.17]{KostkaExpository}).
\end{remark}

\subsubsection{Partitions and $q$-binomial coefficients}

 Before discussing rigged configurations, we need a brief digression on $q$-binomial coefficients.  For a partition $\eta$, write $\eta \subset R(a,b)$ if $\eta$ fits inside the $a \times b$ rectangle, i.e., if $\eta$ has at most $a$ rows and $b$ columns. Given $\eta = \langle \eta_1, \ldots, \eta_a \rangle \subset R(a,b)$, let $\overline{\eta} = \langle b-\eta_a,b-\eta_{a-1}, \ldots, b-\eta_1 \rangle$ be the $180^\circ$ rotation of the complement of $\eta$ in $R(a,b)$. We say that $\eta$ is \emph{self-complementary} if $\overline{\eta} = \eta$.  (When we use this term, the values of $a$ and $b$ will be clear from context.)

Define the \emph{$q$-binomial coefficient} $\qbin{n}{k}{q} := \frac{[n]!_q}{[k]!_q \cdot [n-k]!_q}$, where $[a]!_q$ is the $q$-factorial defined in \S \ref{sec: q=-1}. It is well-known (see, e.g., \cite[\S 1.7]{EC1}) that
\begin{equation}
\label{eq: q-binomial}
\qbin{a+b}{a}{q} = \sum_{\eta \subset R(a,b)} q^{|\eta|}.
\end{equation}
The following result is a prototypical example of the $q = -1$ phenomenon -- for example, it is a special case of \cite[Thm.~1.1]{StembridgePlanePartitions}.

\begin{lemma}
\label{lem: q-binomial -1}
The evaluation $\qbin{a+b}{a}{-1}$ of the $q$-binomial coefficient $\qbin{a+b}{a}{q}$ at $q=-1$ is equal to the number of self-complementary partitions in $R(a,b)$.
\end{lemma}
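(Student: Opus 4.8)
The plan is to evaluate both sides of the claimed identity in closed form and check that they agree, using the complementation map $\eta \mapsto \overline{\eta}$ as the organizing involution. Since $\eta$ and $\overline{\eta}$ together tile $R(a,b)$, we have $|\eta| + |\overline{\eta}| = ab$, so complementation is an involution on $\{\eta \subset R(a,b)\}$ whose fixed points are exactly the self-complementary partitions. When $ab$ is odd (equivalently, $a$ and $b$ are both odd) this involution multiplies the weight $(-1)^{|\eta|}$ by $(-1)^{ab} = -1$, hence is sign-reversing, so by \eqref{eq: q-binomial} the evaluation $\qbin{a+b}{a}{-1} = \sum_{\eta}(-1)^{|\eta|}$ vanishes; at the same time there are no self-complementary partitions in this case, because the forced middle entry (see the condition below) cannot be balanced. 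Thus the $ab$ odd case is immediate, and it remains to treat $ab$ even.

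For the left-hand side when $ab$ is even, I would use the product form $\qbin{a+b}{a}{q} = \prod_{j=1}^{b} [a+j]_q / [j]_q$ together with the evaluation $[j]_{-1} = 1$ for $j$ odd and $[j]_{-1} = 0$ for $j$ even. The even factors of the numerator and denominator, each a simple zero at $q = -1$, must be paired and cancelled by passing to the limit (writing $[2m]_q \sim m\,(q+1)$ near $q = -1$). The number of even factors in numerator and denominator agree precisely when $ab$ is even, and the surviving constants assemble into $\binom{\lfloor (a+b)/2 \rfloor}{\lfloor a/2 \rfloor}$. Equivalently, one may simply cite the $q$-Lucas theorem, which gives $\qbin{a+b}{a}{-1} = \binom{\lfloor (a+b)/2\rfloor}{\lfloor a/2\rfloor}$ whenever we are not in the vanishing case.

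For the right-hand side, I would parametrize the self-complementary partitions directly. Padding $\eta$ with zeros to length $a$, the condition $\overline{\eta} = \eta$ reads $\eta_i + \eta_{a+1-i} = b$ for all $i$, so $\eta$ is determined by its top $\lfloor a/2 \rfloor$ rows, which may be any weakly decreasing sequence $b \geq \eta_1 \geq \cdots \geq \eta_{\lfloor a/2\rfloor} \geq \lceil b/2 \rceil$; the lower bound $\lceil b/2\rceil$ is exactly what is needed for the full sequence to remain weakly decreasing as it crosses the middle, and when $a$ is odd the forced middle entry $\eta_{(a+1)/2} = b/2$ exists iff $b$ is even. Counting such sequences as multisets of size $\lfloor a/2\rfloor$ drawn from the $\lfloor b/2\rfloor + 1$ allowed values yields $\binom{\lfloor a/2\rfloor + \lfloor b/2\rfloor}{\lfloor a/2\rfloor}$, which equals the left-hand side in the $ab$ even case, and which is $0$ when $a,b$ are both odd.

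The main obstacle is the parity bookkeeping concentrated in the $ab$ even case: correctly handling the forced middle row when $a$ is odd, verifying that the lower bound $\lceil b/2\rceil$ is both necessary and sufficient for weak decrease across the center, and matching the even-factor cancellation in $\qbin{a+b}{a}{-1}$ with the binomial count. A more conceptual alternative would be to build a single sign-reversing involution on the non-self-complementary partitions that toggles one cell at the first symmetry-breaking rotation orbit; the delicate point there is keeping the toggled object a legitimate partition shape (order ideal), which is exactly the difficulty the explicit computation above sidesteps.
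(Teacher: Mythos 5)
Your proof is correct, but it is worth knowing that the paper does not actually prove this lemma: it simply cites it as a special case of Stembridge's theorem on the $q=-1$ phenomenon for plane partitions in a box \cite[Thm.~1.1]{StembridgePlanePartitions}. So your argument is a genuinely different (and self-contained) route. You verify the identity by evaluating both sides in closed form: the sign-reversing complementation involution kills both sides when $a$ and $b$ are both odd, and otherwise both sides equal $\binom{\lfloor a/2\rfloor + \lfloor b/2\rfloor}{\lfloor a/2\rfloor}$ --- the left side via the product formula (or $q$-Lucas), the right side via the parametrization $\eta_i + \eta_{a+1-i} = b$, which correctly reduces self-complementary partitions to multisets of size $\lfloor a/2\rfloor$ from $\lfloor b/2\rfloor + 1$ values. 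All the parity bookkeeping checks out, including the forced middle entry $\eta_{(a+1)/2} = b/2$ when $a$ is odd. What the paper's citation buys is brevity and conceptual placement (the lemma is an instance of a general phenomenon for minuscule posets/plane partitions under complementation); what your computation buys is independence from that reference, at the cost of being a verification rather than a structural bijection --- your closing suggestion of a cell-toggling sign-reversing involution is the bijective refinement, and you correctly identify why it is delicate. One small wording slip: in your final sentence the clause ``which is $0$ when $a,b$ are both odd'' should not be attached to the binomial coefficient $\binom{\lfloor a/2\rfloor + \lfloor b/2\rfloor}{\lfloor a/2\rfloor}$ (which equals $1$ for $a=b=1$); rather, the parametrization itself produces no valid partitions in that case. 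Since you disposed of the both-odd case separately in your first paragraph, this is a phrasing issue, not a gap.
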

%
%

\subsubsection{Rigged configurations}
\label{sec: rigged configs}

Let $\la = \langle \la_1, \ldots, \la_m \rangle$ and $\mu = \langle \mu_1, \ldots, \mu_d \rangle$ be two partitions of $n$, such that $\mu$ has exactly $d$ nonzero parts. Let $\nu = (\nu^{(1)}, \nu^{(2)}, \ldots)$ be a sequence of partitions. Let $m_r^{(k)}(\nu)$ be the number of parts of size $r$ in $\nu^{(k)}$, and let $\alpha_i^{(k)}(\nu)$ be the length of the $i$th column of $\nu^{(k)}$. Define the \emph{vacancy numbers} of $\nu$ by
\[
P_r^{(k)}(\nu) = \sum_{i \leq r} (\alpha_i^{(k-1)}(\nu) + \alpha_i^{(k+1)}(\nu) - 2\alpha_i^{(k)}(\nu)),
\]
where we take $\alpha_i^{(0)}(\nu)$ to be the length of the $i$th column of $\mu$. We say that $\nu$ is an \emph{admissible configuration} of \emph{type $(\la, \mu)$} if
\begin{equation}
\label{eq: lambda condition}
|\nu^{(k)}| = \la_{k+1} + \ldots + \la_m
\end{equation}
and $P_r^{(k)}(\nu) \geq 0$ for all $r,k$.

A \emph{rigged configuration} of \emph{type $(\la, \mu)$} is a pair $(\nu, J)$, where $\nu$ is an admissible configuration of type $(\la, \mu)$, and $J$ consists of a partition $J_r^{(k)} \subset R\left(m_r^{(k)}(\nu), P_r^{(k)}(\nu)\right)$ for each $r,k$. Equivalently, $J$ consists of an assignment of non-negative integers (called \emph{riggings}) to each row of the partitions $\nu^{(k)}$, such that the integer assigned to a length-$r$ row of $\nu^{(k)}$ is at most $P_r^{(k)}(\nu)$, and among rows of $\nu^{(k)}$ of the same length, the ordering of the riggings is irrelevant. Let ${\rm RC}(\la,\mu)$ denote the set of rigged configurations of type $(\la, \mu)$.

\begin{example}
\label{ex: RC}
Here is a rigged configuration of type $(\langle5,2,2,1\rangle, \langle3,2,2,2,1\rangle)$:
\[
\begin{array}{cccc}
{\tableau[sY]{,, \\ , \\ , \\ , \\ , \bl}} & {\tableau[sY]{\bl 0, , , \bl 2 \\ \bl 1, , , \bl 2 \\ \bl 1,, \bl 1}} & {\tableau[sY]{\bl 0, , , \bl 0 \\ \bl 0, , \bl 0}} & {\tableau[sY]{\bl 0, , \bl 0}} \\
\mu & \nu^{(1)} & \nu^{(2)} & \nu^{(3)}
\end{array}.
\]
The number to the left of a row is its rigging, and the number to the right is its vacancy number. The partition $\mu$ is not part of the rigged configuration, but it is included to facilitate the calculation of the vacancy numbers $P_r^{(1)}(\nu)$. For example, the last row of $\nu^{(1)}$ has length 1, so its vacancy number is $P_1^{(1)}(\nu) = 5+2-6=1$; the first two rows of $\nu^{(1)}$ have length 2, so their vacancy number is $P_2^{(1)}(\nu) = 9+3-10 = 2$.
\end{example}

%
%

Kirillov and Reshetikhin \cite{KirResh} constructed a remarkable bijection between semistandard tableaux of shape $\la$ and content $\mu$ and rigged configurations of type $(\la, \mu)$. This bijection interacts nicely with both the charge statistic and the operation $e^*_d$ from \S \ref{sec: evac and RSK}. Let $\Theta : {\rm RC}(\la,\mu) \rightarrow {\rm RC}(\la,\mu)$ be the map $(\nu, J) \mapsto (\nu, \overline{J})$, where $\overline{J}^{(k)}_r$ is the complement of $J^{(k)}_r$ in the rectangle $R(m_r^{(k)}(\nu), P_r^{(k)}(\nu))$. Equivalently, $\Theta$ replaces each rigging $s$ with its \emph{corigging} $P_r^{(k)} - s$. Define the statistic $cc$ by
\[
cc(\nu, J) = cc(\nu) + \sum_{r,k \geq 1} |J_r^{(k)}| \quad \text{where} \quad cc(\nu) = \sum_{r,k \geq 1} \alpha_r^{(k)} (\alpha_r^{(k)} - \alpha_r^{(k+1)}).
\]
The key to the proof of Theorem \ref{thm: KF at -1} is the following result.

\begin{theorem}
\label{thm: KSS}
There is a bijection
\[
\Phi : {\rm SSYT}(\la,\mu) \rightarrow {\rm RC}(\la,\mu)
\]
such that
\begin{enumerate}
\item $\Theta (\Phi(T)) = \Phi (e^*_d(T))$ and
\item $cc ( \Theta ( \Phi(T))) = b(\mu) - c(T)$.
\end{enumerate}
Here $d$ is the number of parts of $\mu$, $c$ is the charge statistic, and $b(\mu) = \sum_i (i-1)\mu_i$.
\end{theorem}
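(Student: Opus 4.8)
The plan is to obtain both parts of Theorem~\ref{thm: KSS} from the Kirillov--Reshetikhin bijection together with two equivariance statements: one for the Lascoux--Sch\"utzenberger $\Symm_d$-action, and one for the Sch\"utzenberger involution $e_d$. First I would fix the normalization of $\Phi$ for which $\Theta\circ\Phi$ is the standard cocharge-preserving form of the Kirillov--Reshetikhin map. With this normalization part~(2) is essentially the defining statistic property of that map: summing $q^{cc(\nu,J)}$ over all of $\RC(\la,\mu)$ reproduces the cocharge Kostka--Foulkes polynomial $\tw{K}_{\la\mu}(q)=\sum_{T}q^{b(\mu)-c(T)}$, and the bijection $\Theta\circ\Phi$ matches the two statistics term by term, giving $cc(\Theta(\Phi(T)))=b(\mu)-c(T)$. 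Thus the real content of the theorem is part~(1), the equivariance $\Theta\circ\Phi=\Phi\circ e_d^*$.

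Writing $\Psi:=\Theta\circ\Phi$ (so that $cc(\Psi(T))$ is the cocharge of $T$), part~(1) is equivalent to the single identity $\Psi\circ e_d^*=\Theta\circ\Psi$. Since $e_d^*=w_0\circ e_d$, I would establish this from two facts. The first is that $\Psi$ is \emph{content-sorting invariant}: the set $\RC(\la,\mu)$ depends on $\mu$ only through the column lengths of its diagram, hence only through the multiset of parts of $\mu$, so $\RC(\la,\mu)=\RC(\la,w(\mu))$ for all $w\in\Symm_d$, and the $\Symm_d$-action of Definition--Proposition~\ref{si action} corresponds to the identity on this common set. Concretely, by Proposition~\ref{R matrix through RSK} this action is realized by the combinatorial $R$-matrix, and rigged configurations are invariant under the $R$-matrix; in particular $w_0$ acts as the identity on $\RC(\la,\mu)$. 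The second fact is that $\Psi$ intertwines the Sch\"utzenberger involution with complementation of riggings, $\Psi\circ e_d=\Theta\circ\Psi$. Granting these, $\Psi(e_d^*(T))=\Psi(w_0(e_d(T)))=\Psi(e_d(T))=\Theta(\Psi(T))$, which is part~(1).

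The main obstacle is the second fact, $\Psi\circ e_d=\Theta\circ\Psi$. I would prove it by induction using the recursive, algorithmic definition of the Kirillov--Reshetikhin bijection: the box-removal step that strips the letters equal to the largest value from a tableau and removes a box from each $\nu^{(k)}$ while updating the riggings and vacancy numbers. The difficulty is that $e_d$ reverse-complements the reading word and so exchanges the roles of the largest and smallest letters; matching the recursion for $e_d$ against the effect of $\Theta$ on the stripped configuration forces a careful accounting of how vacancy numbers, riggings, and their complements transform under a single step. An alternative route is crystal-theoretic: realize the crystal $B(\la)$ on rigged configurations, so that $\tilde{e}_i,\tilde{f}_i$ act explicitly on $(\nu,J)$; verify that $\Theta$ satisfies the intertwining relations $\Theta\circ\tilde{e}_i=\tilde{f}_{d-i}\circ\Theta$ and $\Theta\circ\tilde{f}_i=\tilde{e}_{d-i}\circ\Theta$ that characterize the Lusztig involution; and check agreement at one highest-weight vertex. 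Since $e_d$ is the unique crystal involution with these properties and $B(\la)$ is connected, this pins down $\Psi\circ e_d=\Theta\circ\Psi$. In either approach the technical heart is the compatibility of $\Theta$ with the weight-changing (crystal or box-removal) operations, since $\Theta$ is defined content-by-content through the rectangles $R(m_r^{(k)}(\nu),P_r^{(k)}(\nu))$, whose sizes shift as those operations are applied.
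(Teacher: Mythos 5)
Your reduction of part (1) has exactly the same skeleton as the paper's: writing $e_d^* = w_0 \circ e_d$, you split the problem into (a) invariance of the bijection under the content-permuting $\Symm_d$-action, so that $w_0$ becomes invisible on the rigged-configuration side, and (b) the intertwining $\Phi_{w_0(\mu)} \circ e_d = \Theta \circ \Phi_\mu$; this is precisely how the paper argues, with (a) being its Lemma~\ref{lem: Phi is invariant}. The genuine gap is that (b) --- which you yourself call ``the main obstacle'' --- is never proved: you offer two candidate strategies (induction on the Kirillov--Reshetikhin box-removal recursion, or a crystal-theoretic uniqueness argument), but the technical heart of either one, namely the compatibility of $\Theta$ with the weight-changing operations, \emph{is} the content of the theorem and is left undone. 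The paper does not prove this fact either; it imports it from Kirillov--Schilling--Shimozono \cite{KSS} (their Thm.~5.6, quoted as Theorem~\ref{thm: KSS actual}(1)), and even the translation is not free: it requires the identity $\Phi_\mu = \ov{\phi}_{\mathcal{R}(\mu)^t} \circ \trLR \circ \std$ relating the tableau bijection to the KSS bijection on Littlewood--Richardson tableaux (note the transpose). Your crystal route is viable in principle, but as written ``verify that $\Theta$ satisfies the intertwining relations'' is the whole theorem: the crystal operators change content, so one must first extend $\Theta$ and the bijection to all contents, and the verification is then an induction of the same difficulty you flagged for the first route.

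Two further problems. For part (2), your justification is logically backwards: the fact that $\sum q^{cc}$ over $\RC(\la,\mu)$ equals $\tw{K}_{\la\mu}(q)$ does not imply that this particular bijection matches the two statistics term by term --- equidistribution never yields term-by-term matching --- and in the paper the fermionic formula \eqref{eq: fermionic formula} is a \emph{consequence} of part (2), not an input to it. The correct move is to cite the statistic property of the KSS bijection (their Thm.~9.1, i.e., Theorem~\ref{thm: KSS actual}(2)) and then do the charge/cocharge and transpose bookkeeping of \eqref{eq: charge tr} and \eqref{eq: LS charge}. Similarly, for (a) you invoke ``rigged configurations are invariant under the $R$-matrix''; this is true but is itself a nontrivial theorem from the same corpus, and connecting it to the Lascoux--Sch\"utzenberger action on $\SSYT(\la,\mu)$ via Proposition~\ref{R matrix through RSK} requires knowing that the paper's $\Phi_\mu$ agrees with the tensor-product rigged-configuration bijection on highest-weight elements --- more unproven bookkeeping, where the paper instead proves (a) from the KSS characterization of the maps $\Phi_{s_i \mu}^{-1} \circ \Phi_\mu$. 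In short: the architecture is right and matches the paper, but each load-bearing ingredient is either deferred or mis-justified, whereas the paper closes them by precise citations to \cite{KSS}.
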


In the Appendix, we present the algorithm for $\Phi$, and we explain how Theorem \ref{thm: KSS} follows from work of Kirillov, Schilling, and Shimozono \cite{KSS}.

\begin{example}
\label{ex: tableau for RC example}
The rigged configuration $(\nu, J)$ in Example \ref{ex: RC} corresponds under $\Phi$ to the tableau
\[
T = {\tableau[sY]{1,1,1,3,4 \\ 2,2 \\ 3,4 \\ 5}}.
\]
This tableau has charge 4. The rigged configuration $\Theta(\nu, J)$ is obtained by changing the riggings of $\nu^{(1)}$ from $0, 1, 1$ to $2,1,0$, so
\[
cc(\Theta(\nu, J)) = 9 + 3 = 12 = b(\langle3,2,2,2,1\rangle) - 4,
\]
in agreement with Theorem \ref{thm: KSS}(2).
\end{example}

It follows from \eqref{eq: defn cocharge KF}, \eqref{eq: q-binomial}, and Theorem \ref{thm: KSS}(2) that
\begin{equation}
\label{eq: fermionic formula}
\tw{K}_{\la,\mu}(q) = \sum_\nu q^{cc(\nu)} \prod_{r,k \geq 1} \qbin{m_r^{(k)}(\nu) + P_r^{(k)}(\nu)}{m_r^{(k)}(\nu)}{q},
\end{equation}
where the sum is over admissible configurations of type $(\la,\mu)$.
The final ingredient needed to prove Theorem \ref{thm: KF at -1} is a lemma about the parity of $cc(\nu)$.

\begin{lemma}
\label{lem: parity of cc}
Let $\nu$ be an admissible $(\la,\mu)$-configuration. If $m_r^{(k)}(\nu) \cdot P_r^{(k)}(\nu) \equiv 0 \pmod{2}$ for all $r,k$, then
\[
cc(\nu) \equiv b(\la) \pmod{2}.
\]
\end{lemma}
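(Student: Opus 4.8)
The plan is to reduce the congruence to a clean parity statement about the column lengths of $\nu$ and then to extract that statement from the hypothesis. Write $\alpha_r^{(k)}$ for the length of the $r$th column of $\nu^{(k)}$ and put $c_r^{(k)} := \alpha_r^{(k)} \bmod 2$. Reducing the defining formula $cc(\nu) = \sum_{r,k\ge1}\alpha_r^{(k)}(\alpha_r^{(k)}-\alpha_r^{(k+1)})$ modulo $2$ and using $x^2\equiv x$, one gets
\[
cc(\nu) \equiv \sum_{r,k\ge 1}\alpha_r^{(k)} \;+\; \sum_{r,k\ge 1} c_r^{(k)}c_r^{(k+1)} \pmod 2.
\]
The first sum is $\sum_{k\ge1}|\nu^{(k)}|$, which by the admissibility equality \eqref{eq: lambda condition} equals $\sum_{k\ge1}(\lambda_{k+1}+\cdots+\lambda_m)=\sum_{j\ge1}(j-1)\lambda_j=b(\lambda)$. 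Hence the lemma is equivalent to showing that $S:=\sum_{r,k\ge1}c_r^{(k)}c_r^{(k+1)}\equiv 0\pmod 2$.

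To involve the hypothesis I would pass to the partial column sums $Q_r^{(k)} := \big(\sum_{i\le r}\alpha_i^{(k)}\big)\bmod 2$, with the conventions $Q_0^{(k)}=0$ and $Q_r^{(0)}$ computed from $\mu$. Then $c_r^{(k)}=Q_r^{(k)}+Q_{r-1}^{(k)}$, while $m_r^{(k)}\equiv Q_{r-1}^{(k)}+Q_{r+1}^{(k)}$ and $P_r^{(k)}\equiv Q_r^{(k-1)}+Q_r^{(k+1)}\pmod2$. Expanding $S$ in the $Q_r^{(k)}$ and using $Q_0^{(k)}=0$, the ``diagonal'' contributions cancel in pairs and leave the identity $S\equiv\sum_{r,k\ge1}\big(Q_{r-1}^{(k)}Q_r^{(k+1)}+Q_r^{(k)}Q_{r-1}^{(k+1)}\big)$. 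A parallel expansion of $\sum_{r,k\ge1}m_r^{(k)}P_r^{(k)}\equiv\sum_{r,k\ge1}\big(Q_{r-1}^{(k)}+Q_{r+1}^{(k)}\big)\big(Q_r^{(k-1)}+Q_r^{(k+1)}\big)$, after the reindexings permitted by $Q_0^{(k)}=0$, yields the bookkeeping identity
\[
\sum_{r,k\ge1}m_r^{(k)}P_r^{(k)} \;\equiv\; S \;+\; \sum_{r,k\ge1}Q_r^{(k-1)}\big(Q_{r-1}^{(k)}+Q_{r+1}^{(k)}\big)\pmod 2.
\]
Since the left side vanishes by hypothesis, this reduces the problem to proving that the remaining sum $\sum_{r,k\ge1}Q_r^{(k-1)}(Q_{r-1}^{(k)}+Q_{r+1}^{(k)})$ is even.

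The additional structural input I would exploit is the monotonicity forced by admissibility. Because $P_r^{(k)}\ge0$ for every $k\ge1$ and $\nu^{(k)}=\emptyset$ for large $k$, each sequence $k\mapsto g_r^{(k)}:=\sum_{i\le r}\alpha_i^{(k)}$ is convex in $k$ and eventually $0$, hence nonincreasing in $k$; dually $g_r^{(k)}$ is concave in $r$, and $m_r^{(k)}$, $P_r^{(k)}$ are, up to sign, its discrete Laplacians in the two directions, with $Q_r^{(k)}=g_r^{(k)}\bmod2$. The plan is to summation-by-parts the remaining sum in the $r$-direction (legitimate because $Q_0^{(k)}=0$ and the differences $Q_{r-1}^{(k)}+Q_{r+1}^{(k)}$ vanish for large $r$) into the form $\sum_{r,k\ge1}Q_r^{(k)}(Q_{r-1}^{(k-1)}+Q_{r+1}^{(k-1)})$, and then to telescope in the $k$-direction toward the empty top levels—where everything vanishes—using the term-wise hypothesis, which says precisely that $Q_r^{(k-1)}=Q_r^{(k+1)}$ at every position where $Q_{r-1}^{(k)}\ne Q_{r+1}^{(k)}$. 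Concretely I would induct on the number of nonempty levels of $\nu$, peeling off the topmost level and using the reflection $Q_r^{(k-1)}=Q_r^{(k+1)}$ together with the monotone decay of $g_r^{(k)}$ to anchor the induction at the empty levels.

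The step I expect to be hardest is exactly this last telescoping. The hypothesis is symmetric under $k-1\leftrightarrow k+1$, so feeding it naively into the summation-by-parts expression simply reproduces the same sum and yields no information: every manipulation that ignores the boundary is a tautology. The real content therefore lies in breaking this symmetry by using the \emph{asymmetry} of the boundary data—the prescribed $\mu$-level at $k=0$ versus the vanishing as $k\to\infty$, together with the one-sided monotonicity of $g_r^{(k)}$ coming from $P_r^{(k)}\ge0$—to actually pin down the value of the remaining sum rather than merely re-derive it. Making this quantitative, and handling the boundary bookkeeping at the $\mu$-level correctly, is where the genuine work of the proof will be.
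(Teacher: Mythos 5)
Your first paragraph reproduces the paper's own reduction exactly: working mod $2$ and using \eqref{eq: lambda condition}, the lemma is equivalent to the claim that $S:=\sum_{r,k\ge1}\alpha_r^{(k)}\alpha_r^{(k+1)}\equiv 0\pmod 2$. Your bookkeeping identities in the $Q$-variables are also correct as stated. But the proof does not close, and your own diagnosis explains why: the quantity you still need to control, $\sum_{r,k\ge1}Q_r^{(k-1)}\bigl(Q_{r-1}^{(k)}+Q_{r+1}^{(k)}\bigr)$, is exactly as inaccessible as $S$ itself. Indeed, summing the per-level Abel summation $\sum_{r} m_r^{(k)}P_r^{(k)} = \sum_r \alpha_r^{(k)}\bigl(\alpha_r^{(k-1)}+\alpha_r^{(k+1)}-2\alpha_r^{(k)}\bigr)$ over \emph{all} $k\ge1$ makes the two copies of each interior product $\alpha_r^{(k)}\alpha_r^{(k+1)}$ cancel mod $2$, leaving only the boundary term $\sum_r\alpha_r^{(0)}\alpha_r^{(1)}$; so your identity merely rewrites one unknown in terms of another, and the hypothesis, fed into the full sum, yields a tautology — as you say. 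The proposed escape (summation by parts in $r$, telescoping in $k$, induction on the number of nonempty levels, ``using the asymmetry of the boundary data'') is a sketch, not an argument: nothing in it actually evaluates the leftover sum, and you explicitly defer ``the genuine work of the proof.'' That deferred step is the entire content of the lemma, so this is a genuine gap.

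The missing idea — and the point where the paper's proof departs from yours — is that the hypothesis is \emph{term-wise}, so one is free to sum $m_r^{(k)}P_r^{(k)}$ over any subset of indices, not over all of them. The paper sums only over the even levels $k=2\ell$, $\ell\ge1$. This breaks the $k-1\leftrightarrow k+1$ symmetry that defeats your approach, with no boundary analysis and no induction: by the same Abel summation,
\[
0 \;\equiv\; \sum_{\ell\ge1}\sum_{r\ge1} m_r^{(2\ell)}P_r^{(2\ell)} \;\equiv\; \sum_{\ell\ge1}\sum_{r\ge1} \alpha_r^{(2\ell)}\bigl(\alpha_r^{(2\ell-1)}+\alpha_r^{(2\ell+1)}\bigr) \pmod 2,
\]
and since every consecutive pair $\{k,k+1\}$ with $k\ge1$ contains exactly one even index (necessarily of the form $2\ell$ with $\ell\ge1$), the right-hand side contains each product $\alpha_r^{(k)}\alpha_r^{(k+1)}$ exactly once. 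Hence it equals $S$, and $S\equiv 0$ follows in one line. Note also that the $k=0$ boundary never enters, because the even levels $2\ell\ge 2$ stay away from it — precisely the boundary bookkeeping your sketch was struggling to organize.
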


\begin{proof}
In this proof, we use the symbol $\equiv$ to mean ``congruent mod 2'', and we omit the dependence of $m_r^{(k)}, P_r^{(k)}, \alpha_r^{(k)}$ on $\nu$. By definition, we have
\[
cc(\nu) = \sum_{r,k \geq 1} \alpha_r^{(k)}(\alpha_r^{(k)} - \alpha_r^{(k+1)}) \equiv \sum_{r,k \geq 1} \alpha_r^{(k)} + \sum_{r,k \geq 1} \alpha_r^{(k)}\alpha_r^{(k+1)}.
\]
By the definition of $\alpha_r^{(k)}$, $\sum_r \alpha_r^{(k)} = |\nu^{(k)}|$.  By \eqref{eq: lambda condition},
\[
\sum_{r,k \geq 1} \alpha_r^{(k)} = \sum_{k \geq 1} |\nu^{(k)}| = \sum_{i \geq 1} (i-1)\la_i = b(\la),
\]
so it remains to show that $\sum_{r,k \geq 1} \alpha_r^{(k)} \alpha_r^{(k+1)} \equiv 0$ given the assumption on $m_r^{(k)}P_r^{(k)}$.

Since $m_r^{(k)} = \alpha_r^{(k)} - \alpha_{r+1}^{(k)}$, we have
\begin{align*}
\sum_{r \geq 1} m_r^{(k)} P_r^{(k)} &= \sum_{r \geq 1} (\alpha_r^{(k)} - \alpha_{r + 1}^{(k)}) \sum_{i \leq r} (\alpha_i^{(k-1)} + \alpha_i^{(k+1)} - 2\alpha_i^{(k)}) \\
&= \sum_{r \geq 1} \alpha_r^{(k)}(\alpha_r^{(k-1)} + \alpha_r^{(k+1)} - 2\alpha_r^{(k)}) \\
&\equiv \sum_{r \geq 1} \alpha_r^{(k)}(\alpha_r^{(k-1)} + \alpha_r^{(k+1)}).
\end{align*}
This implies that
\[
0 \equiv \sum_{\ell \geq 1} \sum_{r \geq 1} m_r^{(2\ell)}P_r^{(2\ell)} \equiv \sum_{r,k \geq 1} \alpha_r^{(k)}\alpha_r^{(k+1)},
\]
completing the proof.
\end{proof}

\begin{proof}[Proof of Theorem \ref{thm: KF at -1}]
If $a$ and $b$ are both odd, then there are no self-complementary partitions in $R(a,b)$, so $\qbin{a+b}{a}{-1} = 0$ by Lemma \ref{lem: q-binomial -1}. This means that the term in \eqref{eq: fermionic formula} corresponding to $\nu$ vanishes at $q=-1$ unless $m_r^{(k)}(\nu) P_r^{(k)}(\nu)$ is even for all $r,k$, so by Lemma \ref{lem: parity of cc}, we have
\[
\tw{K}_{\la,\mu}(-1) = (-1)^{b(\la)} \sum_{\nu} \prod_{r,k \geq 1} \qbin{m_r^{(k)}(\nu) + P_r^{(k)}(\nu)}{m_r^{(k)}(\nu)}{-1}.
\]
By the definition of $\Theta$ and Lemma \ref{lem: q-binomial -1}, the sum on the right-hand side of this expression is equal to the number of rigged configurations of type $(\la,\mu)$ fixed by $\Theta$.  Finally, by Theorem \ref{thm: KSS}(1), this is also the number of fixed points of $e^*_d$ on ${\rm SSYT}(\la,\mu)$, as claimed.
\end{proof}

\subsection{Combinatorial proofs in special cases}
\label{sec:combinatorial proofs}
The proof of Theorem~\ref{main counting theorem} in the previous section is indirect, and it is natural to seek a bijective proof for the recurrence relation.  So far, we have not been able to give a complete proof.  Nevertheless, we give two special cases that may hold ideas leading to an eventual elementary proof.

\begin{proposition}
\label{prop:count rectangles}
Let $N = mn$.  The number of self-evacuating tabloids of shape $\langle m^n \rangle$ is 
\[
2^{m \cdot \lfloor n/2 \rfloor} \cdot \frac{ \lfloor N / 2 \rfloor !}{(m!)^{\lfloor n / 2 \rfloor} \cdot \left(\lfloor N / 2 \rfloor - m \cdot \lfloor n / 2 \rfloor\right)!}.
\]
\end{proposition}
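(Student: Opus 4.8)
The plan is to make the involution $e$ completely explicit on rectangles via Proposition~\ref{rectangles}, and then reduce the enumeration to a transparent bijective count. Write $N = mn$ and let $\phi$ be the involution of $[\ol{N}]$ given by $\phi(\ol{j}) = \ol{N + 1 - j}$. If $T$ has rows $R_1, \ldots, R_n$ (each an $m$-element subset of $[\ol{N}]$), then by Proposition~\ref{rectangles} the tabloid $e(T)$ has $i$th row $\phi(R_{n + 1 - i})$. Hence $T$ is self-evacuating if and only if $R_i = \phi(R_{n + 1 - i})$ for all $i$; equivalently, $R_{n + 1 - i} = \phi(R_i)$ for $i \leq \lfloor n/2 \rfloor$, while the central row (present exactly when $n$ is odd) must be $\phi$-invariant as a set, $R_{(n + 1)/2} = \phi(R_{(n + 1)/2})$.

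First I would record the orbit structure of $\phi$: it has exactly $\lfloor N/2 \rfloor$ two-element orbits, together with one fixed point when $N$ is odd and none when $N$ is even. Setting $h = \lfloor n/2 \rfloor$, the upper rows $R_1, \ldots, R_h$ may be chosen freely subject only to the requirement that $U := R_1 \cup \cdots \cup R_h$ be disjoint from $\phi(U)$. Since $U \cap \phi(U) = \emptyset$ precisely forbids both $x$ and $\phi(x)$ from lying in $U$ (and forbids any fixed point from lying in $U$), this says exactly that $U$ is a partial transversal of the $\phi$-pairs, meeting $mh$ distinct pairs in one element each. The lower rows are then forced to equal $\phi(R_h), \ldots, \phi(R_1)$, and the central row, when it exists, is forced to be the complement $[\ol{N}] \setminus (U \cup \phi(U))$.

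This produces a bijection between self-evacuating tabloids of shape $\langle m^n \rangle$ and pairs of data: (a) an ordered sequence $(B_1, \ldots, B_h)$ of pairwise disjoint $m$-element subsets of the set of $\lfloor N/2 \rfloor$ $\phi$-pairs, specifying which pairs feed each upper row; and (b) for each of the $mh$ pairs so used, a choice of which of its two elements is placed in the upper row (the other then lands in the mirror row). There are $\tfrac{\lfloor N/2 \rfloor!}{(m!)^{h}(\lfloor N/2 \rfloor - mh)!}$ choices in (a) and $2^{mh}$ in (b), and their product is exactly the claimed formula. The central row needs no independent choice and is automatically legitimate: by a pure cell count it contains $N - 2mh = m$ elements, and it is $\phi$-invariant because it is the complement of the $\phi$-stable set $U \cup \phi(U)$.

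The step I expect to require the most care is verifying that this is genuinely a bijection in all parities simultaneously, with no divisibility hypotheses on $m$ or $n$ — in particular, that the forced central row is always a valid $\phi$-invariant $m$-subset that absorbs the fixed point of $\phi$ when $N$ is odd, and that every self-evacuating tabloid arises from a unique pair of data. Once this is confirmed, the floor functions in the statement appear with no case analysis, arising simply as $\lfloor n/2 \rfloor = h$ (the number of free rows) and $\lfloor N/2 \rfloor$ (the number of $\phi$-pairs), and the formula drops out of the counts in (a) and (b).
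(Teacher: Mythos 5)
Your proof is correct and takes essentially the same approach as the paper's: both deduce the result from Proposition~\ref{rectangles} by pairing the elements of $[\ol{N}]$ under $x \mapsto N+1-x$, using the multinomial coefficient to record which pairs feed which mirror-pair of rows and the factor $2^{m\lfloor n/2\rfloor}$ to record the choice within each used pair. Your write-up simply makes explicit the parity bookkeeping (the forced, automatically $\phi$-invariant central row and the fixed point when $N$ is odd) that the paper's brief proof leaves implicit.
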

\begin{proof}
This is a straightforward consequence of Proposition~\ref{rectangles}: the map $x \mapsto N + 1 - x$ arranges the elements of $[\ol{N}]$ into $\lfloor N/2\rfloor$ pairs and possibly one fixed point.  The multinomial coefficient accounts for which pairs appear in which rows of the rectangle, while the power of two accounts for the two ways to place the elements of each pair.
\end{proof}

\begin{prop}
\label{even multiplicities conjecture}
Suppose that $\lambda = \langle \lambda_1^{2m_1}, \ldots \rangle$ is a partition in which all part multiplicities $2m_1, \ldots$ are even, and let $\mu = \langle \mu_1, \ldots, \mu_k \rangle := \langle \lambda_1^{m_1}, \ldots \rangle$.  Then 
\[
t(\lambda) = 2^{|\mu|} \cdot \frac{|\mu|!}{\mu_1! \cdots \mu_k! }.
\]
\end{prop}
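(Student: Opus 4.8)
The plan is to prove this by induction on $|\mu|$, using the recurrence of Theorem~\ref{main counting theorem} together with a transparent multinomial identity. Write the distinct part-sizes of $\lambda$ as $\lambda_1 > \cdots > \lambda_k$ with multiplicities $2m_1, \ldots, 2m_k$, so that $\mu = \langle \lambda_1^{m_1}, \ldots, \lambda_k^{m_k}\rangle$ and $|\mu| = \sum_i m_i \lambda_i = |\lambda|/2$. The first observation is that, because every multiplicity of $\lambda$ is even, the first (``horizontal domino'') sum in the recurrence is empty, and the coefficient $2\lfloor (2m_i)/2\rfloor$ in the second sum is simply $2m_i$. Thus the recurrence collapses to
\[
t(\lambda) = \sum_{i=1}^k 2m_i \cdot t\!\left(\lambda\mathord{\downarrow}^{(\lambda_i,\lambda_i)}_{(\lambda_i-1,\lambda_i-1)}\right).
\]

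The second step is to check that the vertical-domino operation keeps us inside the class of even-multiplicity partitions: replacing two parts of size $\lambda_i$ by two of size $\lambda_i-1$ changes the multiplicity of $\lambda_i$ from $2m_i$ to $2(m_i-1)$ and raises the multiplicity of $\lambda_i-1$ by $2$, so all multiplicities stay even (uniformly in the degenerate cases where $\lambda_i$ disappears, where $\lambda_i-1$ was already a part, or where $\lambda_i = 1$). Moreover, $\lambda\mathord{\downarrow}^{(\lambda_i,\lambda_i)}_{(\lambda_i-1,\lambda_i-1)}$ is exactly the ``double'' of the partition $\mu^{(i)}$ obtained from $\mu$ by replacing a single part of size $\lambda_i$ by one of size $\lambda_i-1$ (deleting a part of size $1$ when $\lambda_i=1$), and $|\mu^{(i)}| = |\mu|-1$.

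With the base case $\mu = \emptyset$ (where $t(\emptyset) = 1 = 2^0 \cdot |\mathcal{T}(\emptyset)|$) established, I would assume inductively that $t(\lambda\mathord{\downarrow}^{(\lambda_i,\lambda_i)}_{(\lambda_i-1,\lambda_i-1)}) = 2^{|\mu|-1}\,|\mathcal{T}(\mu^{(i)})|$. Substituting into the simplified recurrence gives
\[
t(\lambda) = 2^{|\mu|}\sum_{i=1}^k m_i\,|\mathcal{T}(\mu^{(i)})|,
\]
so the proposition reduces to the identity $|\mathcal{T}(\mu)| = \sum_{i=1}^k m_i\,|\mathcal{T}(\mu^{(i)})|$. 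Using that the number of tabloids of shape $\nu$ is the multinomial coefficient $|\nu|!/\prod_j \nu_j!$ (as noted in the remarks following Theorem~\ref{main counting theorem}), one computes $|\mathcal{T}(\mu^{(i)})| = \lambda_i\,(|\mu|-1)!/\prod_j \mu_j!$, whence $\sum_i m_i\,|\mathcal{T}(\mu^{(i)})| = \big(\sum_i m_i\lambda_i\big)(|\mu|-1)!/\prod_j \mu_j! = |\mu|\cdot(|\mu|-1)!/\prod_j \mu_j! = |\mathcal{T}(\mu)|$, using $\sum_i m_i\lambda_i = |\mu|$.

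The argument is short, and I expect no serious obstacle beyond careful bookkeeping in the degenerate cases of the vertical-domino move (a part vanishing, two part-sizes merging, or $\lambda_i = 1$), all of which are absorbed cleanly by the multinomial formula. The one genuinely load-bearing input is the recurrence of Theorem~\ref{main counting theorem}; a fully bijective proof --- directly matching self-evacuating tabloids of shape $\lambda$ with pairs consisting of a tabloid of shape $\mu$ and a sign vector in $\{\pm\}^{|\mu|}$, in the spirit of the rectangular case treated in Propositions~\ref{rectangles} and~\ref{prop:count rectangles} --- would be more satisfying but is not needed for the statement at hand.
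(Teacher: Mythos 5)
Your proof is correct and non-circular: the paper proves Theorem~\ref{main counting theorem} via Kim's recurrence for Green's polynomial evaluations and the rigged-configuration evaluation of Kostka--Foulkes polynomials at $q=-1$, with no reference to this proposition, so invoking the recurrence here is legitimate, and your multinomial bookkeeping (the degenerate cases and the Pascal-type identity $\sum_i m_i\,|\mathcal{T}(\mu^{(i)})| = |\mathcal{T}(\mu)|$) checks out. However, the paper's own proof goes in exactly the opposite direction, and deliberately so: it is the direct bijective argument you set aside as unnecessary. Using Theorem~\ref{thm: alg for e}, one applies the combinatorial $R$-matrix to rearrange a tabloid $T$ of shape $\lambda$ into the symmetric composition shape $\langle \lambda_1^{m_1}, \lambda_2^{m_2}, \ldots, \lambda_2^{m_2}, \lambda_1^{m_1}\rangle$; since this shape is fixed by $w_0$, Proposition~\ref{prop:R braid} makes the final $R_{w_0}$ step of the evacuation algorithm the identity, so $T$ is self-evacuating if and only if its rearrangement has the symmetry that $\ol{i}$ lies in row $j$ exactly when $\ol{n+1-i}$ lies in row $\ell(\lambda)+1-j$. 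Such symmetric tabloids are counted immediately: the pairs $\{\ol{i}, \ol{n+1-i}\}$ are distributed among symmetric pairs of rows in $|\mu|!/(\mu_1!\cdots\mu_k!)$ ways, and each pair independently chooses which element sits in the higher row, giving the factor $2^{|\mu|}$. The difference matters beyond taste: this proposition sits in \S\ref{sec:combinatorial proofs}, whose stated purpose is to give elementary proofs \emph{independent} of the indirect proof of Theorem~\ref{main counting theorem}, and the remark immediately following it uses the proposition to \emph{establish} the recurrence in the even-multiplicity case (reducing it to the multinomial Pascal rule) --- content that becomes vacuous if the proposition is itself deduced from that recurrence, as in your argument. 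The paper's symmetrization technique also scales in a way the inductive route does not: as noted in \S\ref{sec:domino}, the same $R$-matrix trick reduces all of Theorem~\ref{main counting theorem} to the distinct-parts case of Corollary~\ref{distinct parts corollary}.
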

\begin{proof}
Given a tabloid $T$ of shape $\lambda$, use the combinatorial $R$-matrix to arrange it into the symmetric composition shape $\langle \lambda_1^{m_1}, \lambda_2^{m_2}, \ldots, \lambda_2^{m_2}, \lambda_1^{m_1}\rangle$.  Since the shape is fixed by the reverse permutation $w_0$, we have by Proposition~\ref{prop:R braid} that the last step in the operation ``turn upside-down, subtract values from $n + 1$, and use the combinatorial $R$-matrix to rectify to the original shape'' (as in Theorem~\ref{thm: alg for e}) has no effect.
Consequently, $T$ is fixed by evacuation if and only if its symmetric counterpart satisfies the symmetry that $\ol{i}$ is in row $j$ if and only if $\ol{n + 1 - i}$ is in row $\ell(\lambda) + 1 - j$.  Counting tabloids with this symmetry is straightforward: the pairs $\{i, |\lambda| + 1 - i\}$ may be assigned to symmetric pairs of rows in $ \frac{|\mu|!}{\mu_1! \cdots \mu_k! }$ ways, and each pair may independently choose which element is in the higher row and which is in the lower.
\end{proof}

\begin{remark}
Exactly the same technique may be used to find a (nearly as nice) formula for the case when exactly one of the multiplicities is odd.
\end{remark}

\begin{remark}
The counting formula in Proposition~\ref{even multiplicities conjecture} establishes the recurrence relation in Theorem~\ref{main counting theorem} for shapes with all multiplicities even: the first summand is empty, and the shapes $\lambda\downarrow^{(\lambda_i, \lambda_i)}_{(\lambda_i - 1, \lambda_i - 1)}$ still have all multiplicities even, and so (making the substitution $m_i \mapsto 2m_i$ and defining $\mu$ as in the statement of Proposition~\ref{even multiplicities conjecture}) one is left checking the identity
\[
2^{|\mu|} \frac{|\mu|!}{\mu_1!  \cdots \mu_k!} = 2^{|\mu| - 1} \cdot \left( 2 \frac{(|\mu| - 1)!}{(\mu_1 - 1)! \cdots \mu_k!} + \ldots + 2 \frac{(|\mu| - 1)!}{\mu_1! \cdots (\mu_k - 1)!}\right),
\]
which is just the multinomial version of Pascal's recurrence.
\end{remark}

\section{Open problems and remarks}
\label{sec:open problems}

\subsection{Direct combinatorial interpretation of recurrence}
\label{sec:domino}

The major enumerative problem suggested by our work is to give a direct bijective proof of the recurrence relation in Theorem~\ref{main counting theorem}.  The basic approach used in the proof of Proposition~\ref{even multiplicities conjecture}, using the combinatorial $R$-matrix to move pairs of equal-length rows to symmetric positions around the center, also yields a reduction of Theorem~\ref{main counting theorem} to the special case Corollary~\ref{distinct parts corollary} of partitions with distinct parts.  In particular, a combinatorial proof of the corollary, paired with this argument, would yield a fully combinatorial proof of the main theorem.

Unfortunately, we have been able to give a direct combinatorial proof of Corollary~\ref{distinct parts corollary} only in the cases that $\ell(\lambda) = 1$ (trivial) or $2$.  The latter argument, establishing that $t(a, b) = t(a - 2, b) + t(a, b - 2)$ via direct bijections when $a > b > 1$, consists of several cases, using the characterization of evacuation in terms of the combinatorial $R$-matrix (Theorem~\ref{thm: alg for e}) and the combinatorics of matchings (equivalently, Dyck words).  We have been unsuccessful in extending this argument to partitions with more than $2$ distinct parts.

As an alternative approach, one might ask for a definition of ``domino tabloids,'' whose enumeration satisfies the recurrence and that can be put in bijection with the fixed points of evacuation.  One might also expect these (hypothetical) domino tabloids to play a role in combinatorics or representation theory similar to the role played by domino tableaux.  For example, domino tableaux of special shapes index Kazhdan--Lusztig cells in the Weyl groups of types $B_n$ and $C_n$ \cite{BV, Garfinkle3}; they play a role in the classification of primitive ideals of enveloping algebras of complex semisimple Lie algebras \cite{Garfinkle1, Garfinkle2, Garfinkle3}; they are associated with insertion procedures having various applications to permutations and signed permutations \cite{Dennises, Garfinkle1, Garfinkle2, Garfinkle3}; and they are related to fixed points of evacuation by a beautiful bijection involving growth diagrams \cite{vanL}.

\subsection{A dihedral group's worth of actions}
In the finite symmetric group, the Dynkin diagram is a path, having a unique nontrivial automorphism.  In the affine symmetric group, the Dynkin diagram is a cycle, and so has a full dihedral group's worth of symmetries, of which the operation $r$ is just one.  We discuss these is more detail below.

The rotation symmetry of the diagram sending $s_i \mapsto s_{i + 1}$ (taking indices modulo $n$) corresponds in the group to conjugation by the ``shift permutation" $s = [2, 3, 4, \ldots, n + 1]$; in terms of the permutation matrix, it simultaneously shifts the axes one unit to the left and one unit up.  (The other rotation symmetries are just powers of this symmetry.)  After applying AMBC, the effect of this map is to add $1$ to each entry of the $P$ and $Q$ tabloids \cite[\S6]{CLP}; that is, it corresponds exactly to the promotion map of \S\ref{sec:R-matrix}.  This map is one of the original examples of the cyclic sieving phenomenon, using the obvious $q$-multinomial coefficient; see \cite[Prop.~4.4]{RSW} (where one should interpret a tabloid as a flag of sets by letting the $k$th member of the flag be the union of the first $k$ rows of the tabloid).

When $n$ is odd, the $n$ reflection symmetries of the Dynkin diagram are all conjugate under the action of $s$; in terms of the permutation matrix, they are each given by a rotation fixing some diagonal cell of $\ZZ \times \ZZ$.  Passing through AMBC, we recover $n$ involutions on tabloids, of which the affine evacuation map $e$ is just one.  In terms of the dual equivalence graph (Theorem~\ref{thm: def1 for evacuation}), these variant evacuations permute the RRSS tabloids differently, sending $\RRSS(\lambda, \ol{k})$ to $\RRSS(\lambda, \ol{n - k + c})$ for some $c$, and similarly sending Knuth moves of type $\ol{k}$ to those of type $\ol{n - 1 + c - k}$.  (The map $e$ corresponds to $c = 0$.) One can compute this variant evacuation by reversing the order of the rows, replacing each entry $\ol{i}$ of the tabloid with $\ol{n - i + 1 + c}$, and then applying the combinatorial $R$-matrix $R_{w_0}$ to recover the original shape (compare Theorem~\ref{thm: alg for e}).  Since these maps are conjugate to $e$ in the automorphism group of the dual equivalence graph, they have the same fixed-point structure; in particular, Theorem~\ref{main counting theorem} is valid for all variants.

When $n$ is even, the situation is different: the $n$ reflection symmetries of the Dynkin diagram come in two conjugacy classes under the action of $s$, one of which consists of rotations of the permutation matrix that fix a diagonal cell, and the other of which consists of rotations fixing no cells.  As in the case $n$ odd, these maps are all automorphisms of the dual equivalence graph having the same form as those in the previous paragraph, but the automorphisms with $c$ even are not conjugate to those with $c$ odd.
\begin{example}
There are six tabloids of shape $\langle 2, 2 \rangle$, with dual equivalence graph given by the disjoint union of two paths:
\[
\resizebox{.9\textwidth}{!}{\begin{picture}(0,0)%
\includegraphics{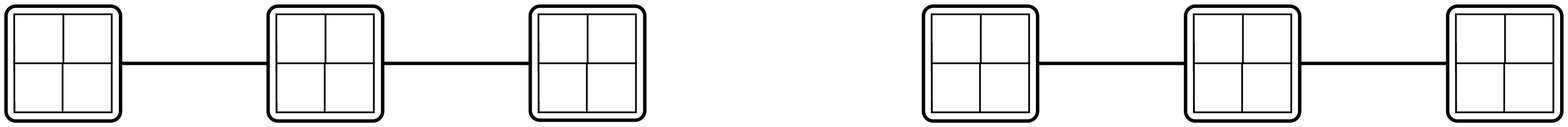}%
\end{picture}%
\setlength{\unitlength}{3947sp}%
\begingroup\makeatletter\ifx\SetFigFont\undefined%
\gdef\SetFigFont#1#2#3#4#5{%
  \reset@font\fontsize{#1}{#2pt}%
  \fontfamily{#3}\fontseries{#4}\fontshape{#5}%
  \selectfont}%
\fi\endgroup%
\begin{picture}(14311,1121)(1228,-3714)
\put(15211,-3481){\makebox(0,0)[b]{\smash{{\SetFigFont{20}{24.0}{\rmdefault}{\mddefault}{\updefault}{\color[rgb]{0,0,0}$\ol{4}$}%
}}}}
\put(1561,-3481){\makebox(0,0)[b]{\smash{{\SetFigFont{20}{24.0}{\rmdefault}{\mddefault}{\updefault}{\color[rgb]{0,0,0}$\ol{3}$}%
}}}}
\put(2011,-3031){\makebox(0,0)[b]{\smash{{\SetFigFont{20}{24.0}{\rmdefault}{\mddefault}{\updefault}{\color[rgb]{0,0,0}$\ol{2}$}%
}}}}
\put(2011,-3481){\makebox(0,0)[b]{\smash{{\SetFigFont{20}{24.0}{\rmdefault}{\mddefault}{\updefault}{\color[rgb]{0,0,0}$\ol{4}$}%
}}}}
\put(3961,-3031){\makebox(0,0)[b]{\smash{{\SetFigFont{20}{24.0}{\rmdefault}{\mddefault}{\updefault}{\color[rgb]{0,0,0}$\ol{1}$}%
}}}}
\put(3961,-3481){\makebox(0,0)[b]{\smash{{\SetFigFont{20}{24.0}{\rmdefault}{\mddefault}{\updefault}{\color[rgb]{0,0,0}$\ol{2}$}%
}}}}
\put(4411,-3031){\makebox(0,0)[b]{\smash{{\SetFigFont{20}{24.0}{\rmdefault}{\mddefault}{\updefault}{\color[rgb]{0,0,0}$\ol{3}$}%
}}}}
\put(4411,-3481){\makebox(0,0)[b]{\smash{{\SetFigFont{20}{24.0}{\rmdefault}{\mddefault}{\updefault}{\color[rgb]{0,0,0}$\ol{4}$}%
}}}}
\put(6361,-3031){\makebox(0,0)[b]{\smash{{\SetFigFont{20}{24.0}{\rmdefault}{\mddefault}{\updefault}{\color[rgb]{0,0,0}$\ol{3}$}%
}}}}
\put(6361,-3481){\makebox(0,0)[b]{\smash{{\SetFigFont{20}{24.0}{\rmdefault}{\mddefault}{\updefault}{\color[rgb]{0,0,0}$\ol{1}$}%
}}}}
\put(6811,-3031){\makebox(0,0)[b]{\smash{{\SetFigFont{20}{24.0}{\rmdefault}{\mddefault}{\updefault}{\color[rgb]{0,0,0}$\ol{4}$}%
}}}}
\put(6811,-3481){\makebox(0,0)[b]{\smash{{\SetFigFont{20}{24.0}{\rmdefault}{\mddefault}{\updefault}{\color[rgb]{0,0,0}$\ol{2}$}%
}}}}
\put(9961,-3031){\makebox(0,0)[b]{\smash{{\SetFigFont{20}{24.0}{\rmdefault}{\mddefault}{\updefault}{\color[rgb]{0,0,0}$\ol{1}$}%
}}}}
\put(9961,-3481){\makebox(0,0)[b]{\smash{{\SetFigFont{20}{24.0}{\rmdefault}{\mddefault}{\updefault}{\color[rgb]{0,0,0}$\ol{2}$}%
}}}}
\put(10411,-3031){\makebox(0,0)[b]{\smash{{\SetFigFont{20}{24.0}{\rmdefault}{\mddefault}{\updefault}{\color[rgb]{0,0,0}$\ol{4}$}%
}}}}
\put(10411,-3481){\makebox(0,0)[b]{\smash{{\SetFigFont{20}{24.0}{\rmdefault}{\mddefault}{\updefault}{\color[rgb]{0,0,0}$\ol{3}$}%
}}}}
\put(12361,-3031){\makebox(0,0)[b]{\smash{{\SetFigFont{20}{24.0}{\rmdefault}{\mddefault}{\updefault}{\color[rgb]{0,0,0}$\ol{2}$}%
}}}}
\put(12361,-3481){\makebox(0,0)[b]{\smash{{\SetFigFont{20}{24.0}{\rmdefault}{\mddefault}{\updefault}{\color[rgb]{0,0,0}$\ol{1}$}%
}}}}
\put(12811,-3031){\makebox(0,0)[b]{\smash{{\SetFigFont{20}{24.0}{\rmdefault}{\mddefault}{\updefault}{\color[rgb]{0,0,0}$\ol{4}$}%
}}}}
\put(12811,-3481){\makebox(0,0)[b]{\smash{{\SetFigFont{20}{24.0}{\rmdefault}{\mddefault}{\updefault}{\color[rgb]{0,0,0}$\ol{3}$}%
}}}}
\put(14761,-3031){\makebox(0,0)[b]{\smash{{\SetFigFont{20}{24.0}{\rmdefault}{\mddefault}{\updefault}{\color[rgb]{0,0,0}$\ol{2}$}%
}}}}
\put(14761,-3481){\makebox(0,0)[b]{\smash{{\SetFigFont{20}{24.0}{\rmdefault}{\mddefault}{\updefault}{\color[rgb]{0,0,0}$\ol{1}$}%
}}}}
\put(15211,-3031){\makebox(0,0)[b]{\smash{{\SetFigFont{20}{24.0}{\rmdefault}{\mddefault}{\updefault}{\color[rgb]{0,0,0}$\ol{3}$}%
}}}}
\put(1561,-3031){\makebox(0,0)[b]{\smash{{\SetFigFont{20}{24.0}{\rmdefault}{\mddefault}{\updefault}{\color[rgb]{0,0,0}$\ol{1}$}%
}}}}
\end{picture}%
}
\] 
The action of the usual evacuation map is described in Proposition~\ref{rectangles}; in terms of the graph, this fixes the component containing $\tableau[sY]{\ol{1}, \ol{2} \\ \ol{3}, \ol{4}}$ pointwise and swaps the end vertices in the other component, giving a total of $4$ fixed points (in agreement with Proposition~\ref{prop:count rectangles}).  However, members of the other conjugacy class of evacuation maps (with $c$ odd) actually swap the two components, and so have $0$ fixed points.
\end{example}

In the case of standard Young tableaux of any fixed shape, promotion and evacuation together give an action of a dihedral group \cite[Lem.~9]{SchutzenbergerPromotion}, but the order of promotion is not well understood in general.  However, when the underlying shape $\lambda$ is a rectangle, the order is known, and promotion exhibits a cyclic sieving phenomenon with the $q$-analogue of the hook-length formula \cite[Thm.~1.3]{Rhoades}.  In the case that the rectangle has even size $n$, there is again a second conjugacy class of reflections in the dihedral group.  The number of fixed points of these evacuation variants is equal to $\pm \chi^\lambda_{\rho'_2(n)}$, where $\rho'_2(n) = \langle 2^{n/2 - 1}, 1, 1\rangle$ is the cycle type of the element $w_0 c_n$ and $c_n$ is the long cycle $2 3 \cdots n 1$ \cite[Prop.~7.3]{Rhoades}.\footnote{In the case of semistandard tableaux of shape $\lambda$, with entries at most $d$, the fixed points of usual evacuation ($e_d$) are given by substituting the alternating sequence $(1, -1, 1, -1, 1, -1, \ldots, \pm 1)$ into the Schur polynomial $s_\lambda(x_1, \ldots, x_d)$ \cite[Thm.~3.1]{Stembridge}. When $d$ is even and $\lambda$ is a rectangle, the fixed points of the other conjugacy class of reflections are given by substituting the almost alternating sequence $\pm (1, 1, -1, 1, -1, 1, \ldots, \pm 1)$ into the same polynomial \cite[Thm.~7.6]{Rhoades}.}  In our setting, we have the following conjecture.

\begin{conjecture}
Suppose that $n$ is even, and let $e' = e \circ \pr$ be one of the alternate evacuation maps described above.  For $\lambda \vdash n$, let $t'(\lambda)$ denote the number of tabloids of shape $\lambda$ fixed by $e'$.  Then $t'(\lambda)$ is given by the evaluation of a Green's polynomial at $q = -1$:
\[
t'(\lambda) = \Green^\lambda_{\rho'_2(n)}(-1).
\]

Moreover, one has that $t'(\lambda)$ satisfies the same recurrence as $t(\lambda)$ (as in Theorem~\ref{main counting theorem}), with initial values $t'(\langle 2 \rangle) = 1$ and $t'(\langle 1, 1\rangle) = 0$ (whereas $t(\langle 1, 1\rangle) = 2$).
\end{conjecture}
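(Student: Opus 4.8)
The plan is to run the entire argument of Section~\ref{sec:self-evacuating} again for the reflection underlying $e'$, and then to isolate the single new enumerative input that is not already available. At the level of affine permutations, $e$ is realized by conjugation by the reflection $i \mapsto n+1-i$ of $\ZZ$, while $\pr$ is realized by conjugation by the shift $i \mapsto i+1$; hence $e' = e \circ \pr$ is realized by conjugation by the reflection $g\colon i \mapsto n-i$. For $n$ even this lies in the second conjugacy class of reflections, so the existence of $e'$ as an involutive automorphism of $\AAA_\lambda$---together with the facts that it sends $\RRSS(\lambda,\ol k)$ to $\RRSS(\lambda,\ol{n-1-k})$ and a Knuth move of type $\ol k$ to one of type $\ol{n-2-k}$---follows verbatim from the Section~\ref{sec:defining evacuation} machinery (Theorem~\ref{thm: def1 for evacuation} applied to $g$ in place of the rotation), with shifted indices.

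Next I would establish the RSK description of $e'$, mirroring Proposition~\ref{prop: alt alg for e}. By the argument of Theorem~\ref{thm: alg for e} adapted to this reflection, one has $e'(T) = R_{w_0}(\gamma'(T))$, where $\gamma'$ reverses the rows of $T$ and replaces each entry $\ol i$ by $\ol{n-i}$. The only difference from the map $\gamma$ of Proposition~\ref{prop: alt alg for e} (which uses $\ol i \mapsto \ol{n+1-i}$) is a single cyclic shift of the value alphabet, and a cyclic shift of values promotes the insertion tableau. Tracking this through RSK exactly as in the proof of Proposition~\ref{prop: alt alg for e} gives
\[
T \rsks (P,Q) \quad\Longrightarrow\quad e'(T) \rsks \bigl(e_n(\partial P),\; e_d^*(Q)\bigr),
\]
where the recording-side map $e_d^*$ is unchanged and the insertion-side map is $e_n \circ \partial = \partial^{-1} \circ e_n$, the second reflection of the promotion--evacuation dihedral group, which is an involution on $\SYT$. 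Writing $u'(\nu) = \#\{P \in \SYT(\nu) : e_n(\partial P) = P\}$ and keeping $v(\nu,\lambda)$ from Theorem~\ref{thm: KF at -1}, this yields the factorization $t'(\lambda) = \sum_\nu u'(\nu)\, v(\nu,\lambda)$.

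With this in hand, part (a) reduces to a single clean input: the $q=-1$ phenomenon $u'(\nu) = (-1)^{b(\nu)} \chi^\nu_{\rho'_2(n)}$ for the involution $e_n \circ \partial$ on standard tableaux of shape $\nu$ (the analogue of Theorem~\ref{counting self-evacuating tableaux} for the second reflection). Granting this, I would combine it with $v(\nu,\lambda) = (-1)^{b(\nu)} \tw{K}_{\nu\lambda}(-1)$ (Theorem~\ref{thm: KF at -1}) and the identity $\Green^\lambda_\rho(q) = \sum_\nu \chi^\nu_\rho \tw{K}_{\nu\lambda}(q)$, specialized to $\rho = \rho'_2(n)$, to obtain
\[
t'(\lambda) = \sum_\nu \chi^\nu_{\rho'_2(n)}\, \tw{K}_{\nu\lambda}(-1) = \Green^\lambda_{\rho'_2(n)}(-1),
\]
the signs cancelling just as in the proof of Theorem~\ref{main counting theorem}. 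The recurrence in part (b) would then follow by applying Kim's recurrence for Green's-polynomial evaluations at $-1$ \cite{KimSpringer} to the lower index $\rho'_2(n)$ in place of $\rho_2(n)$; I expect the same recurrence to emerge, with the new initial data $t'(\langle 2\rangle)=1$ and $t'(\langle 1,1\rangle)=0$ verified directly.

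The main obstacle is the fixed-point count $u'(\nu) = (-1)^{b(\nu)}\chi^\nu_{\rho'_2(n)}$ for \emph{arbitrary} shapes. For rectangles this is exactly Rhoades' result \cite{Rhoades} (cited above), but for general $\nu$ promotion does not have order $n$, so the cyclic sieving machinery underlying the rectangular case does not apply, and a genuinely new argument is required---most plausibly either a direct character computation identifying the $e_n\partial$-fixed tableaux, or a rigged-configuration analysis adapted to $e_n \circ \partial$ in the spirit of Theorem~\ref{thm: KSS}. A secondary point to verify is that the sign in $u'(\nu)$ is exactly $(-1)^{b(\nu)}$, so that it cancels against $v(\nu,\lambda)$, and that Kim's recurrence indeed specializes to the same recurrence for $\rho'_2(n)$ (with its third sum vacuous and all signs $+1$), presumably via a different choice of the parameter in \cite{KimSpringer}.
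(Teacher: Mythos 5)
First, a point of orientation: this statement is a \emph{conjecture} in the paper --- the authors give no proof of it --- so there is nothing to compare your proposal against; it must stand on its own. It does not. The opening step (realizing $e' = e\circ\pr$ as conjugation by $i \mapsto n-i$, and deducing that it is an automorphism of $\AAA_\lambda$ sending $\RRSS(\lambda,\ol{k})$ to $\RRSS(\lambda,\ol{n-1-k})$ and Knuth moves of type $\ol{k}$ to type $\ol{n-2-k}$) is fine, since it just composes the established properties of $e$ and $\pr$. The fatal problem is your RSK description of $e'$. You assert that because $\pr$ is a cyclic shift of the value alphabet, it promotes the insertion tableau, so that $T \rsks (P,Q)$ implies $e'(T) \rsks (e_n(\partial P), e_d^*(Q))$. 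This is false: tabloid promotion interacts well with \emph{AMBC} (that is exactly the point of \S\ref{sec:open problems}), but not with \emph{RSK}, and it does not even preserve the RSK shape. Concretely, take $n=2$, $\lambda = \langle 1,1\rangle$, and let $T$ be the tabloid with $\ol{1}$ in row 1 and $\ol{2}$ in row 2. Then $P(T)=Q(T)$ is the single column of shape $\langle 1,1\rangle$, whereas $\pr(T)$ (which has $\ol{2}$ in row 1 and $\ol{1}$ in row 2) satisfies $P(\pr(T))=Q(\pr(T))$ equal to the single row of shape $\langle 2\rangle$ --- a different shape, so no shape-preserving map $\partial$ on $P$ can account for $\pr$. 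Worse, your formula forces $e'(T)=T$ for \emph{both} tabloids of shape $\langle 1,1\rangle$ (in each case $\partial$, $e_n$, and $e_d^*$ act trivially on the unique one-row/one-column tableaux involved), giving $t'(\langle 1,1\rangle)=2$; but a direct computation shows $e'$ swaps these two tabloids, so $t'(\langle 1,1\rangle)=0$ --- which is precisely the initial value asserted in the conjecture you are trying to prove. Consequently the factorization $t'(\lambda) = \sum_\nu u'(\nu)\,v(\nu,\lambda)$ is also false (left side $0$, right side $2$ in this example), and the reduction collapses before one ever reaches the input you flag as the main obstacle.

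A secondary issue: even the key lemma you propose to assume, $u'(\nu) = (-1)^{b(\nu)}\chi^\nu_{\rho'_2(n)}$, cannot be correct as stated. At $\nu = \langle 1,1\rangle$, $n=2$ it would give $u'(\nu) = -1$, but a fixed-point count is non-negative (here it equals $1$). So the sign pattern needed for your cancellation against $v(\nu,\lambda) = (-1)^{b(\nu)}\tw{K}_{\nu\lambda}(-1)$ is also wrong. The underlying lesson is that replacing $e$ by $e' = e\circ\pr$ destroys exactly the compatibility with RSK (Proposition~\ref{prop: alt alg for e}) that powers the proof of Theorem~\ref{main counting theorem}; any proof of this conjecture needs a mechanism that works on the AMBC side rather than the RSK side, which is presumably why the authors left it open.
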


\subsection{Weights}
The statement of Theorem~\ref{evacuation exists theorem} raises the following natural question.
\begin{question}
\label{weight question}
If $w \ambcs (P, Q, \rho)$, is there a natural characterization of the weight vector $\rho'$ with the property $r(w) \ambcs (e(P), e(Q), \rho')$?
\end{question}

Very roughly speaking, the entries of $\rho$ track how far the balls (nonzero matrix entries) of $w$ are shifted from the main diagonal.  Since rotation moves every entry above the main diagonal the same distance below the diagonal, to a first approximation we might expect that $\rho' = - \rho$.  (This is essentially what happens when one takes the \emph{inverse} of an affine permutation -- see \cite[Prop.~3.1]{CLP}.)  The following conjecture, based on some numerical evidence, makes this more precise.

\begin{conjecture}
There exists a function $d$ from tabloids of shape $\lambda$ to integer vectors of length $\ell(\lambda)$ such that for every $w$,  $\rho(r(w))$ is the dominant representative (in the sense of \cite[\S5.2]{CPY} or \cite[\S5]{CLP}) of the vector $-\rho(w) + d(Q(w)) - d(P(w))$.
\end{conjecture}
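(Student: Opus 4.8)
The plan is to follow the same inductive scaffolding that established Theorem~\ref{thm: def1 for evacuation}, but now tracking the weight vector in addition to the tabloids. First I would fix the precise meaning of the ingredients. Recall from \cite[\S5]{CPY} that AMBC decomposes the balls of an affine permutation into channels indexed by the rows of the shape $\lambda$, and that $\rho(w)$ records, channel by channel, the signed displacement of each channel from the main diagonal; the ``dominant representative'' of \cite[\S5.2]{CPY} (equivalently \cite[\S5]{CLP}) is the canonical choice inside a fixed fundamental domain for the lattice of allowed shifts. The target identity is that $\rho(r(w))$ equals the dominant representative of $-\rho(w) + d(Q(w)) - d(P(w))$ for a function $d$ depending only on the tabloid.

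The geometric heart of the matter is that $r$ is the $180^\circ$ rotation of the periodic matrix fixing the box $[n]\times[n]$, so it sends a ball displaced distance $\delta$ to the north-east of the diagonal to one displaced distance $\delta$ to the south-west. I would first make this precise at the level of raw (uncanonicalized) displacement data, showing that rotation negates each channel displacement; combined with the fact (Proposition~\ref{rotation-shape}) that $r$ dualizes the Shi poset, and hence reindexes the channels consistently with that dualization, this produces the leading term $-\rho(w)$. The remaining, and genuinely delicate, contribution comes from renormalizing the negated data back into the fundamental domain: the required lattice shift is governed entirely by where the channels are anchored, and these anchors are exactly the combinatorial data recorded by the insertion and recording tabloids. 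Isolating this shift and proving that it separates as $d(Q(w)) - d(P(w))$, rather than depending on the joint pair $(P,Q)$, is the main obstacle. Here the natural tool is the inverse formula of \cite[Prop.~3.1]{CLP}, which handles the $P \leftrightarrow Q$ swap together with $\rho \mapsto -\rho$ (up to dominance); since $r$ commutes with inversion and $r(w)^{-1} = r(w^{-1})$, comparing the two computations should force the correction to split into a ``column'' part $d(P)$ and a ``row'' part $d(Q)$ with opposite signs.

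With the shape of the answer in hand, I would establish the formula by induction on the distance in $\AAA_\lambda$ to a reverse row superstandard tabloid, mirroring the proof of Theorem~\ref{thm: def1 for evacuation}. The base case is the explicit evaluation of both sides on $\RRSS(\lambda,\ol{k})$, where every channel is anchored transparently and $d$ can be read off directly. For the inductive step I would use Proposition~\ref{covering theorem} to lift a Knuth move on tabloids to one on affine permutations, combine it with the known effect of a Knuth move on $\rho$ from \cite{CLP}, and verify that the candidate right-hand side transforms compatibly; this simultaneously proves that $d(Q)$ and $d(P)$ are well defined as functions of tabloids. Finally, because $r$ is an involution, applying the proposed formula twice and demanding $\rho(r^2(w)) = \rho(w)$ yields the consistency constraint that $d(e(X)) - d(X)$ is independent of $X$, which both pins down $d$ up to an additive constant and serves as a useful check on the construction.
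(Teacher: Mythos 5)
This statement is not proved in the paper at all: it appears in \S\ref{sec:open problems} as an open conjecture, supported only by numerical evidence. So there is no paper proof to compare against, and your proposal must stand on its own as a complete argument --- which it does not. What you have written is a program, and the two places where it says ``should'' are exactly the places where the conjecture lives.

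The first genuine gap is the separation of the renormalization term into $d(Q(w)) - d(P(w))$. You correctly identify this as ``the main obstacle,'' but then dispose of it by saying that comparing $r$ with inversion via \cite[Prop.~3.1]{CLP} ``should force the correction to split.'' That is not an argument. The inverse formula relates $w$ to $w^{-1}$ (swapping $P$ and $Q$ and negating $\rho$ up to dominance), while $r(w)$ is a different permutation; the identity $r(w^{-1}) = r(w)^{-1}$ gives you a commuting square of four permutations, but nothing in that square obviously prevents the correction term from depending jointly on the pair $(P,Q)$ rather than splitting as a difference of single-tabloid functions. Ruling out joint dependence is the entire content of the conjecture, and no mechanism for it is supplied.

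The second gap is structural: your inductive scaffolding cannot be set up with the tools you cite. Proposition~\ref{prop 3.6} and Proposition~\ref{covering theorem} control only the tabloids $P$ and $Q$ under Knuth moves; they say nothing about how the weight vector transforms. Indeed, the paper's proof of Theorem~\ref{thm: def1 for evacuation} succeeds precisely because it discards the weight at every step (every conclusion is ``for some weight $\rho'$''), and the paper states outright that it ``mostly ignores the weight vector.'' To run your induction you would need a new lemma of the form: if $w$ and $w'$ differ by a Knuth move, then $\rho(w')$ is determined from $\rho(w)$ by an explicit rule compatible with the candidate formula. No such statement exists in \cite{CLP} or \cite{CPY} as cited, and producing one is plausibly harder than the conjecture itself. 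A related unaddressed issue is the base case: the permutations with $Q(w) = \RRSS(\lambda,\ol{k})$ form an infinite family with varying $\rho(w)$, so ``reading off $d$'' there already requires the full strength of the claim for that family, not a single computation. Finally, your ``leading term $-\rho(w)$'' step also needs care: dominance does not commute with negation, and the channels that AMBC selects for $r(w)$ need not be the pointwise rotations of the channels selected for $w$, so channel-by-channel negation of displacements is itself an unproved assertion.
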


\subsection{More on the asymptotic realization}
\label{sec:asymptotic remark}
One is tempted to conjecture the following stronger version of Proposition~\ref{prop:asymptotic}: for \emph{any} periodic sequence $(T_0, T_1, \ldots)$ of tableaux with limit $T$ (as in \S\ref{sec:asymptotic}), the sequence $(e(T_0), e(T_1), \ldots)$ is periodic with limit $e(T)$.  However, this conjecture is false, as illustrated by the following example.  
\begin{example}
With $n = 2$, consider the following sequence of tableaux:
\def\Tscale{1.3}
\[
\tableau[Y]{1\\2} \; , \;
\tableau[Y]{-1,1,3,5,6,7\\0,2,4} \; , \;
\tableau[Y]{-3,-1,1,3,5,7,8,9,10,11,12\\-2,0,2,4,6} \; , \; \ldots.
\]
The associated limit tabloid is $T = \tableau[sY]{\ol{1} \\ \ol{2}}$.  However, the evacuations of these tableaux are
\[
\tableau[Y]{1\\2} \; , \;
\tableau[Y]{-1,0,1,2,4,6 \\ 3,5,7} \; , \;
\tableau[Y]{-3,-2,-1,0,1,2,3,5,7,9,11 \\ 4,6,8,10,12} \; , \; \ldots
\]
a periodic sequence with limit $\tableau[sY]{\ol{1},  \ol{2}}$ -- not even of the correct shape! \end{example}

On the other hand, one should be able to recover the following version of the conjecture.
\begin{conjecture}
Given a tabloid $T$, let $(T_0, T_1, \ldots)$ be a periodic sequence of tableaux with limit $T$ and \emph{uniformly bounded error}, in the sense that there is an absolute constant $M$ such that for all $k$, all but at most $M$ entries of $T_k$ form an interval in $\ZZ$, and at most $M$ entries of $T_k$ lie in a row other than the limit row.  Then the sequence $(e(T_0), e(T_1), \ldots)$ is periodic with limit $e(T)$.
\end{conjecture}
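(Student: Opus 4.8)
The plan is to reduce the conjecture to Proposition~\ref{prop:asymptotic} by realizing an arbitrary uniformly-bounded-error sequence as a bounded perturbation of the canonical nested-window sequence, and then to control how that perturbation propagates through evacuation. The starting point is the classical identity already implicit in the proof of Proposition~\ref{prop:asymptotic}: for a standard tableau $U$ of size $N$ one has $e(U) = P(\ol{\mathrm{read}(U)})$, where $\mathrm{read}(U)$ is the reading word (last row first), the bar denotes reversal followed by the complement $x \mapsto N+1-x$ (all computed after the order-isomorphism of the entry set of $U$ with $[N]$ and transported back), and $P$ is the Robinson--Schensted insertion tableau; this is just the statement that conjugation by $w_0$ carries $(P,Q)$ to $(e(P),e(Q))$, applied to the word $\mathrm{read}(U)$, which inserts to $U$. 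Thus the goal becomes to show that the words $\ol{\mathrm{read}(T_k)}$ insert to a sequence of tableaux that stabilizes, periodically, with limit $e(T)$.

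First I would fix the comparison sequence. Let $w$ be an affine permutation with $P(w) = T$ and let $(S_k)$ be its nested-window sequence from \S\ref{sec:asymptotic}; by Proposition~\ref{prop:asymptotic} the sequence $(e(S_k))$ is periodic with limit $e(T)$, and by \cite[Thm.~7.3]{CPY} the words $\ol{\mathrm{read}(S_k)}$ are, up to boundedly many extreme letters, insertion-equivalent to windows of the bi-infinite value sequence of $r(w)$. The uniformly-bounded-error hypothesis says exactly that $T_k$ fills an interval $[a_k,b_k]$ except for at most $M$ entries and places all but at most $M$ entries in their limit rows; hence $\mathrm{read}(T_k)$ differs from the reading word of the periodic-pattern filling of $[a_k,b_k]$ (value $i$ in row $j(i)$, rows increasing) in only boundedly many letters, all of extreme value. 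Here one must also verify that column-strictness together with the partition-shape requirement on $T_k$ pins the residue $a_k+b_k \bmod n$ for all large $k$; this is what forces the reference complement $x \mapsto a_k+b_k+1-x$ to coincide modulo $n$ with the affine complement $x \mapsto n+1-x$, ruling out a spurious power of promotion (a residue shift) sneaking into the limit.

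The heart of the argument is then a stability lemma: \emph{if two words on $\ZZ$ are each asymptotically $n$-periodic and agree outside a bounded band of largest and smallest values, then their Robinson--Schensted insertion tableaux agree row-by-row on every fixed interval of values, once the windows are large.} Granting this, $\ol{\mathrm{read}(T_k)}$ and $\ol{\mathrm{read}(S_k)}$ insert to tableaux with the same stabilized interior rows, so $(e(T_k))$ is periodic with the same limit as $(e(S_k))$, namely $e(T)$. I would prove the stability lemma by analyzing bumping paths: in a tableau whose rows are long and whose interior is $n$-periodic, inserting or deleting the few perturbed letters—extremal in value—produces bumping paths confined to a bounded neighborhood of the left or right boundary, and so cannot change the row of any fixed interior value in the limit. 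Equivalently one may run Fomin's growth-diagram local rules and invoke the periodic stabilization of the boundary shapes established in \cite[\S7]{CPY}.

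The main obstacle is precisely this locality/propagation control, since evacuation and Robinson--Schensted insertion are globally defined: a priori, altering a handful of extreme letters could bump through the entire tableau. The delicate point is that in the asymptotic (wide-tableau) regime the propagation is in fact bounded, so the insertion tableau stabilizes on interior values despite unbounded activity at the two ends; I expect this to follow from the quantitative control of bumping paths in Shi's algorithm and its AMBC refinement in \cite{CPY}, but making the two-sided estimate (simultaneous smallest- and largest-value perturbations) uniform in $k$ is where the real work lies. The window-balance bookkeeping of the second paragraph is a necessary but comparatively routine companion to this estimate.
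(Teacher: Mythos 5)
The first thing to note: the statement you set out to prove is left as an \emph{open conjecture} in the paper --- the authors give no proof of it (they prove only Proposition~\ref{prop:asymptotic}, for the special nested-window sequences, and even that only as a proof sketch). So there is no paper argument to compare yours against; the question is whether your proposal closes the gap on its own. It does not.

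The central step of your plan --- the ``stability lemma'' asserting that boundedly many perturbed letters of extreme value cannot change the limiting row of any fixed interior value under Robinson--Schensted insertion --- is essentially the whole content of the conjecture, and you do not prove it: you state it, sketch a hoped-for bumping-path argument, and then acknowledge yourself that making the two-sided estimate uniform in $k$ ``is where the real work lies.'' A proposal whose key lemma is deferred in this way is a research plan, not a proof. In addition, one of your reduction steps is unjustified as stated: the uniformly-bounded-error hypothesis bounds the \emph{number} of entries of $T_k$ lying in a row other than their limit row, but it does not say those entries are extreme in value; your claim that $\mathrm{read}(T_k)$ differs from the reading word of the periodic-pattern filling ``in only boundedly many letters, all of extreme value'' therefore itself needs an argument (presumably that standardness of $T_k$ forces interior misplacements to cascade, contradicting the bound $M$ --- but that is again a locality statement of the same flavor as the missing lemma). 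The paper's counterexample immediately preceding the conjecture shows why this control is genuinely delicate: when the number of misplaced entries grows with $k$, insertion activity sweeps through the entire tableau and can even change the limit shape. Nothing you quote from \cite{CPY} supplies the needed quantitative propagation bound, so until the stability lemma is actually proved, the conjecture remains open.
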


This conjecture is close in spirit to the question (implicit in \cite{pak}) of whether pairs of ``periodic infinite tableaux" correspond to an ``eventually periodic permutation'' under an appropriate asymptotic version of RS.

\appendix
\section{More on rigged configurations}

In this appendix, we describe the forward direction of the bijection $\Phi : {\rm SSYT}(\la,\mu) \rightarrow {\rm RC}(\la,\mu)$ for the reader's convenience, and we explain how Theorem \ref{thm: KSS} follows from results of Kirillov, Schilling, and Shimozono \cite{KSS}.

\subsection{The rigged configuration bijection}

Recall that a rigged configuration is a pair $(\nu, J)$, where $\nu = (\nu^{(1)}, \nu^{(2)}, \ldots)$ is sequence of partitions, and $J$ is an assignment of a non-negative integer \emph{rigging} to each row of each partition, such that the rigging attached to a row of length $r$ in $\nu^{(k)}$ is at most the vacancy number $P_r^{(k)}(\nu)$. Say that a row of the partition $\nu^{(k)}$ is a \emph{singular string} if its rigging is equal to the corresponding vacancy number; by convention, every partition has a singular string of length $0$.

\begin{defprop}[Kirillov--Reshetikhin \cite{KirResh}]
\label{defprop: RC bijection}
The bijection $\Phi : {\rm SSYT}(\la,\mu) \rightarrow {\rm RC}(\la,\mu)$ of Theorem \ref{thm: KSS} is computed recursively as follows. The empty tableau maps to the empty rigged configuration. Given a non-empty tableau $T$, let $d$ be the maximum entry of $T$, and let $U$ be the tableau obtained from $T$ by removing the right-most box in $T$ that contains $d$. Assume that $\Phi(U) = (\nu, J)$ has been computed; we explain how to construct $\Phi(T) = (\nu', J')$. Suppose the box removed from $T$ is in row $r$. Set $m_r = \infty$ and define a sequence
\[
m_{r-1} \geq m_{r-2} \geq \cdots \geq m_1 \geq 0
\]
by taking $m_j$ to be the biggest number less than or equal to $m_{j+1}$ such that $\nu^{(j)}$ contains a singular string of length $m_j$.  The configuration $\nu'$ is obtained from $(\nu, J)$ by adding one box to a singular string of length $m_j$ in $\nu^{(j)}$ for each $j \leq r-1$. For each of these $r-1$ changed rows, the rigging is chosen so that the row is a singular string with respect to the vacancy numbers of $\nu'$. The riggings of all other rows are unchanged.
\end{defprop}

An algorithm for the other direction is given in \cite[\S 3]{KirResh}.

\begin{example}
Figure \ref{fig: Phi ex} shows one step of the algorithm for $\Phi$. To obtain the rigged configuration in Example \ref{ex: RC} corresponding to the tableau in Example \ref{ex: tableau for RC example}, one performs two more steps of the algorithm. The first of these steps adds a box to the fourth row of $\mu$, which causes the vacancy numbers of $\nu^{(1)}$ to increase from 1 to 2.
\end{example}

\begin{figure}
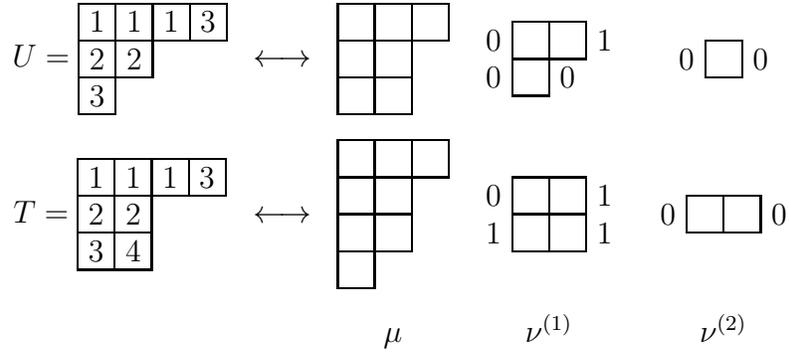


\[
\begin{array}{ccccc}
U = {\tableau[sY]{1,1,1,3 \\ 2,2 \\ 3}} & \longleftrightarrow & {\tableau[sY]{,, \\,\\,}} & {\tableau[sY]{\bl 0, , , \bl 1 \\ \bl 0, , \bl 0}} & {\tableau[sY]{\bl 0, , \bl 0}} \bigskip \\
T = {\tableau[sY]{1,1,1,3 \\ 2,2 \\ 3,4}} & \longleftrightarrow & {\tableau[sY]{,, \\,\\,\\,\bl}} & {\tableau[sY]{\bl 0, , , \bl 1 \\ \bl 1, , , \bl 1}} & {\tableau[sY]{\bl 0, , , \bl 0}} \bigskip \\
&&\mu & \nu^{(1)} & \nu^{(2)}
\end{array}
\]

\caption{One step in the algorithm for $\Phi$. The number to the left of a row is its rigging, and the number to the right its vacancy number.}
\label{fig: Phi ex}
\end{figure}

In \S \ref{sec: rigged configs}, we required that $\mu$ be a partition. In fact, for any strict composition $\mu = \langle \mu_1, \ldots, \mu_d \rangle$, the algorithm in Definition-Proposition \ref{defprop: RC bijection} gives a bijection $\Phi_\mu : \SSYT(\la, \mu) \rightarrow \RC(\la, \sort(\mu))$, where $\sort(\mu)$ is the partition obtained by sorting the parts of $\mu$.\footnote{The role of $\mu$ in the definition of $\RC(\la,\mu)$ (and in the algorithm for $\Phi$) is in the computation of the vacancy numbers $P_r^{(1)}(\nu)$, and only the lengths of the columns of $\mu$ are relevant to this computation. Thus, permuting rows of $\mu$ does not affect these definitions.} The existence of these bijections implies that for a permutation $w \in \Symm_d$, the composition $\Phi_{w \mu}^{-1} \circ \Phi_\mu$ defines a shape-preserving, content-permuting action of the symmetric group on semistandard tableaux. It is asserted in \cite{KSS} that this action agrees with the action introduced in \S \ref{sec: evac and RSK}; we supply the omitted proof.

\begin{lemma}
\label{lem: Phi is invariant}
For $T \in \SSYT(\la,\mu)$ and $w \in \Symm_d$,
\[
\Phi_{w \mu}^{-1} (\Phi_\mu(T)) = w(T),
\]
where $w(T)$ denotes the action of Definition-Proposition \ref{si action}.
\end{lemma}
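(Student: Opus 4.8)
The plan is to prove that the two $\Symm_d$-actions agree on a generating set and then propagate the identity. First I would record that the assignment $w \mapsto \Phi_{w\mu}^{-1}\circ\Phi_\mu$ composes like a groupoid: for $w,w'\in\Symm_d$ one has
\[
\left(\Phi_{w'(w\mu)}^{-1}\circ\Phi_{w\mu}\right)\circ\left(\Phi_{w\mu}^{-1}\circ\Phi_\mu\right) = \Phi_{(w'w)\mu}^{-1}\circ\Phi_\mu ,
\]
so this family composes exactly as the Lascoux--Sch\"utzenberger action of Definition--Proposition~\ref{si action} does (the latter being a genuine $\Symm_d$-action by that result). Consequently, writing $w$ as a product of simple transpositions, it suffices to prove the lemma when $w = s_i$ for a single $i \in [d-1]$.

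Next I would reformulate the case $w=s_i$. Since $\sort(s_i\mu)=\sort(\mu)$, the maps $\Phi_\mu$ and $\Phi_{s_i\mu}$ have the same target $\RC(\la,\sort\mu)$ (the content of the footnote following Definition--Proposition~\ref{defprop: RC bijection}), and $\Phi_{s_i\mu}^{-1}(\Phi_\mu(T))$ is by definition the unique tableau of content $s_i\mu$ sharing its rigged configuration with $T$. Hence, by injectivity of $\Phi_{s_i\mu}$, the identity $\Phi_{s_i\mu}^{-1}(\Phi_\mu(T))=s_i(T)$ is equivalent to the single assertion
\[
\Phi_{s_i\mu}\big(s_i(T)\big)=\Phi_\mu(T);
\]
that is, the Lascoux--Sch\"utzenberger involution $s_i$ preserves the rigged configuration. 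I would then localize this to a two-letter statement. Unwinding the recursive algorithm of Definition--Proposition~\ref{defprop: RC bijection} builds $\Phi(T)$ by inserting the boxes of $T$ in increasing order of their entries, so that the partial rigged configuration obtained after inserting all entries $\le m$ is exactly $\Phi$ of the subtableau of $T$ on entries $\le m$. Because $s_i$ alters only the cells containing $i$ and $i+1$, the subtableaux of $T$ and $s_i(T)$ on entries $\le i-1$ coincide, as do their cell sets on entries $\le i+1$; moreover the sorted partial contents agree both after entries $\le i-1$ and after entries $\le i+1$, so the vacancy numbers match at the start and the conclusion of the two-letter computation and evolve identically while entries $\ge i+2$ are inserted. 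Thus the insertion steps for entries $\le i-1$ agree for $T$ and $s_i(T)$, and---provided the two agree after the $i$'s and $(i+1)$'s are inserted---the remaining steps for entries $\ge i+2$ agree as well. The lemma therefore reduces to showing that, starting from a common partial configuration, inserting the $i$- and $(i+1)$-boxes of $T$ yields the same rigged configuration as inserting those of $s_i(T)$.

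This two-letter base case is the main obstacle, since it is precisely where the genuine combinatorics of the Kirillov--Reshetikhin bijection enters. Over the fixed base configuration the problem involves only the single pair of nodes indexed $i,i+1$, and the $s_i$ move is the matched-parenthesis swap of Definition--Proposition~\ref{si action}. I would verify the invariance directly from the insertion rule, tracking how the $i$- and $(i+1)$-boxes build the relevant rows of $\nu^{(i-1)},\nu^{(i)},\nu^{(i+1)}$ and checking that the parenthesis swap leaves these rows and their riggings unchanged; this is the local statement underlying the work of Kirillov, Schilling, and Shimozono~\cite{KSS}, whose framework we are invoking. Alternatively, one can reach the same two-letter identity by relating the Lascoux--Sch\"utzenberger action to the combinatorial $R$-matrix (through Proposition~\ref{R matrix through RSK}) and appealing to the $R$-matrix invariance of the rigged-configuration bijection in~\cite{KSS}; the bookkeeping of the order-reversing $w_0$-twist introduced by that route is the price one pays for avoiding the direct computation.
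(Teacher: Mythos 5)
Your reduction to the generators $s_i$ is fine and matches the paper's first step, and your reformulation of the generator case as ``$\Phi_{s_i\mu}(s_i(T))=\Phi_\mu(T)$'' is correct. But the proof has a genuine gap exactly where you acknowledge ``the main obstacle'' lies: the two-letter base case is asserted, not proved. Saying ``I would verify the invariance directly from the insertion rule, tracking\ldots'' is a plan rather than an argument, and the claim that this local statement is what ``underlies'' \cite{KSS} is not accurate: Kirillov--Schilling--Shimozono do \emph{not} establish the agreement of the two actions by locally tracking insertions. Indeed a direct tracking argument is much harder than your sketch suggests. Under the conventions of Definition--Proposition~\ref{defprop: RC bijection}, inserting a box lying in row $r$ of the tableau modifies $\nu^{(j)}$ for \emph{all} $j\le r-1$ (the affected partitions are indexed by the row of the cell, not by its entry), so the insertions of the $i$- and $(i+1)$-boxes can touch $\nu^{(1)},\ldots,\nu^{(i)}$, not merely $\nu^{(i-1)},\nu^{(i)},\nu^{(i+1)}$; moreover, the unmatched-parenthesis swap moves the letters $i$ and $i+1$ between cells in different rows, so the two insertion sequences differ globally in both the cells inserted and the singular strings they select. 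Nothing in your sketch controls this.

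The paper closes this gap by citation rather than computation, and in a different way than your fallback suggests: by \cite[Def.-Prop.~8.1 and Lem.~8.5]{KSS}, the maps $\Psi_i:=\Phi_{s_i\mu}^{-1}\circ\Phi_\mu$ are \emph{characterized} by two properties, namely (a) $\Psi_i$ does not change entries greater than $i+1$, and (b) $\Psi_{d-1}\circ e_d = e_d\circ\Psi_1$, where $e_d$ is evacuation on $\SSYT_d(\la)$. One then checks that the Lascoux--Sch\"utzenberger operators of Definition--Proposition~\ref{si action} satisfy (a) by definition and (b) by \cite[Prop.~2.87(iii)]{ShimozonoDummies}, so they must coincide with the $\Psi_i$. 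If you want a citation-based proof, this characterization is the correct thing to invoke; your alternative route through the combinatorial $R$-matrix invariance of the rigged-configuration bijection could in principle also work, but as you concede it is left at the level of a gesture, with the $w_0$-twist bookkeeping unaddressed. As written, the proposal reduces the lemma to precisely the statement that needs proving and then stops.
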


\begin{proof}
It suffices to consider the generators $s_i$. By \cite[Def.-Prop.~8.1 and Lem.~8.5]{KSS}, the maps $\Psi_i := \Phi_{s_i \mu}^{-1} \circ \Phi_\mu$ are characterized by the properties that $\Psi_i$ does not change entries greater than $i+1$, and $\Psi_{d-1} \circ e_d = e_d \circ \Psi_1$, where $e_d$ is the evacuation map on semistandard tableaux with entries at most $d$. The maps $s_i$ of Definition-Proposition \ref{si action} satisfy the first property by definition, and the second property by, e.g., \cite[Prop.~2.87(iii)]{ShimozonoDummies}.
\end{proof}

\subsection{Relation with work of Kirillov--Schilling--Shimozono}

The rigged configurations discussed in \S \ref{sec: rigged configs} are a special case of a more general theory of rigged configurations developed by Kirillov, Schilling, and Shimozono in \cite{KSS}. In this section we introduce the more general setup and notation of that paper, state several of their main theorems, and explain how these theorems imply Theorem \ref{thm: KSS}.

Let $\mathcal{R} = (\mathcal{R}_1, \ldots, \mathcal{R}_d)$ be a sequence of rectangular partitions, and $\la$ an arbitrary partition. In \cite{KSS}, the authors define a set $\CLR(\la; \mathcal{R})$ of \emph{Littlewood--Richardson tableaux}. The elements of $\CLR(\la; \mathcal{R})$ are a subset of the standard tableaux of shape $\la$, and the cardinality of $\CLR(\la; \mathcal{R})$ is equal to the multiplicity of the Schur function $s_{\la}$ in the product $s_{\mathcal{R}_1} \cdots s_{\mathcal{R}_d}$. In particular, when $\mathcal{R} = \mathcal{R}(\mu) := (\langle \mu_1 \rangle, \ldots, \langle \mu_d \rangle)$ is a sequence of one-row partitions, the cardinality of $\CLR(\la; \mathcal{R}(\mu))$ is the Kostka number $K_{\la \mu}$, and $\CLR(\la; \mathcal{R}(\mu))$ consists of the standardizations\footnote{The \emph{standardization} of a semistandard tableau $T$ of content $\langle \mu_1, \ldots, \mu_d \rangle$ is the standard tableau obtained by replacing the 1's in $T$ with $1, \ldots, \mu_1$, from left to right; the 2's in $T$ with $\mu_1 + 1, \ldots, \mu_1 + \mu_2$, from left to right; etc. This is unrelated to the standardization of a tabloid discussed in \S \ref{sec:special cases}!} of the elements of $\SSYT(\la, \mu)$. Let $\std$ denote the standardization map $\SSYT(\la, \mu) \rightarrow \CLR(\la; \mathcal{R}(\mu))$.

We recall several properties of Littlewood--Richardson tableaux. First, evacuation of standard tableaux restricts to a bijection $e: \CLR(\la; \mathcal{R}) \rightarrow \CLR(\la; w_0(\mathcal{R}))$, where $w_0(\mathcal{R}) = (\mathcal{R}_d, \ldots, \mathcal{R}_1)$ \cite[paragraph after Lem. 3.8]{KSS}. Let $\eta^t$ denote the transpose (or conjugate) of the partition $\eta$, and let $\mathcal{R}^t = (\mathcal{R}_1^t, \ldots, \mathcal{R}_d^t)$. There is a bijection $\trLR : \CLR(\la; \mathcal{R}) \rightarrow \CLR(\la^t; \mathcal{R}^t)$ \cite[Def.~2.7]{KSS}, and a \emph{generalized charge} statistic $c_\mathcal{R} : \CLR(\la; \mathcal{R}) \rightarrow \mathbb{Z}_{\geq 0}$ defined in \cite{ShimozonoLR, SchillWar}. By \cite[Prop.~26 and Thm.~28]{ShimozonoAtoms} or \cite[Lem.~6.5]{SchillWar}, one has
\begin{equation}
\label{eq: charge tr}
c_{\mathcal{R}^t}(\trLR(T)) = n(\mathcal{R}) - c_\mathcal{R}(T)
\end{equation}
for $T \in \CLR(\la; \mathcal{R})$, where $n(\mathcal{R}) = \sum_{i < j} |\mathcal{R}_i \cap \mathcal{R}_j|$  and $\mathcal{R}_1 \cap \mathcal{R}_2$ denotes intersection of Young diagrams. When $\mathcal{R} = \mathcal{R}(\mu)$, $\trLR$ is simply the reflection of a standard tableau over the main diagonal, and
\begin{equation}
\label{eq: LS charge}
c_{\mathcal{R}(\mu)}(T) = c(\std^{-1}(T)),
\end{equation}
where $c$ is the usual charge statistic on semistandard tableaux.\footnote{When $\mathcal{R} = \mathcal{R}(\mu)$, the definition of $c_\mathcal{R}$ in \cite[\S 2.5]{ShimozonoLR} becomes the formula for charge given by Lascoux--Leclerc--Thibon \cite[Thm.~5.1]{LLTcharge}.} Note also that $n(\mathcal{R}(\mu)) = b(\sort(\mu)) = \sum (i-1)(\sort(\mu))_i$.

Kirillov, Schilling, and Shimozono also define a notion of \emph{rigged configuration of type $(\la; \mathcal{R})$}; they denote the set of these by $\RC(\la; \mathcal{R})$. A rigged configuration of type $(\la; \mathcal{R}(\mu))$ is the same as a rigged configuration of type $(\la, \sort(\mu))$, as defined in \S \ref{sec: rigged configs}. The involution $\Theta$ and the statistic $cc$ naturally generalize to rigged configurations of type $(\la; \mathcal{R})$. In \cite[\S 4.1]{KSS}, the authors define a bijection
\[
\ov{\phi}_\mathcal{R} : \CLR(\la; \mathcal{R}) \rightarrow \RC(\la^t; \mathcal{R}^t).
\]
Comparing the algorithm of Definition-Proposition \ref{defprop: RC bijection} with the description of $\ov{\phi}_\mathcal{R}$ in \cite[\S 4.2]{KSS}, one sees that
\[
\Phi_\mu = \ov{\phi}_{\mathcal{R}(\mu)^t} \circ \trLR \circ \std.
\]

The definition of a rigged configuration of type $(\la; \mathcal{R})$ does not depend on the ordering of the rectangles in $\mathcal{R}$, so we identify the sets $\RC(\la; \mathcal{R})$ and $\RC(\la; w_0(\mathcal{R}))$. Also set $\tw{\phi}_\mathcal{R} = \Theta \circ \ov{\phi}_\mathcal{R}$.

\begin{theorem}[{\cite[Thm.~5.6, 9.1]{KSS}}]
\label{thm: KSS actual}
For $T \in \CLR(\la; \mathcal{R})$, one has
\begin{enumerate}
\item $\Theta(\ov{\phi}_\mathcal{R}(T)) = \ov{\phi}_{w_0(\mathcal{R})}(e(T))$ and
\item $cc (\tw{\phi}_\mathcal{R}(T)) = c_\mathcal{R}(T)$.
\end{enumerate}
\end{theorem}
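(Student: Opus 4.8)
The plan is to prove the two statements essentially independently, corresponding to the two cited results of \cite{KSS}: part (2), the statistic identity, is proved by induction using the recursive structure of $\ov{\phi}_\mathcal{R}$, while part (1), the intertwining of $\Theta$ with evacuation, is best seen as a compatibility of $\ov{\phi}_\mathcal{R}$ with the symmetric group (crystal) action. Throughout I would work with the box-by-box description of $\ov{\phi}_\mathcal{R}$ from \cite[\S4.2]{KSS}, of which the algorithm recorded in Definition--Proposition~\ref{defprop: RC bijection} is the special case of one-row rectangles, together with the transpose duality $\trLR$ and the charge-reversal identity \eqref{eq: charge tr}.

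For part (2) I would induct on $|\la| = \sum_i |\mathcal{R}_i|$. The empty tableau maps to the empty rigged configuration, where $cc$ and $c_\mathcal{R}$ both vanish. For the inductive step I remove the last box selected by the recursive algorithm; on the configuration side this deletes one box from a singular string in each of the nested partitions $\nu^{(1)}, \ldots, \nu^{(r-1)}$, and I must track the induced change in $cc(\tw{\phi}_\mathcal{R}(T)) = cc(\Theta(\ov{\phi}_\mathcal{R}(T)))$ and compare it with the increment of the generalized charge $c_\mathcal{R}$. The delicate point is that $\Theta$ replaces each rigging $s$ in a length-$r$ row of $\nu^{(k)}$ by the corigging $P^{(k)}_r - s$, so the contribution $\sum_{r,k}|\overline{J}^{(k)}_r|$ to $cc$ is governed by the \emph{updated} vacancy numbers; inserting a box shifts the $P^{(k)}_r$ (via the columns $\alpha^{(k)}_i$) and simultaneously changes the quantity $cc(\nu) = \sum_{r,k}\alpha^{(k)}_r(\alpha^{(k)}_r - \alpha^{(k+1)}_r)$. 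The computation thus splits into a configuration term and a rigging term, and the heart of the argument is to show their combined change equals the charge increment dictated by $c_\mathcal{R}$ (cf.\ \eqref{eq: LS charge}).

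For part (1) I would argue that $\Theta \circ \ov{\phi}_\mathcal{R}$ and $\ov{\phi}_{w_0(\mathcal{R})} \circ e$ are the same bijection $\CLR(\la;\mathcal{R}) \to \RC(\la^t;\mathcal{R}^t)$, using the identification $\RC(\la^t;\mathcal{R}^t) \cong \RC(\la^t; w_0(\mathcal{R})^t)$ that is insensitive to the ordering of the rectangles. First, both sides have the same underlying configuration $\nu$: evacuation $e$ preserves the shape $\la$, $\Theta$ alters only riggings, and passing from $\mathcal{R}$ to $w_0(\mathcal{R})$ leaves the column data entering the vacancy numbers unchanged. It then remains to match the rigging data. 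The conceptual route is to recognize $\ov{\phi}_\mathcal{R}$ as a crystal isomorphism intertwining the $\Symm_d$-action of Definition--Proposition~\ref{si action} (via Lemma~\ref{lem: Phi is invariant}, in the one-row case, and its generalization) with complementation on rigged configurations, and to identify evacuation $e$ with the Lusztig involution, characterized as the unique diagram-flip-equivariant involution fixing the appropriate base point; $\Theta$ is then forced to be its image under $\ov{\phi}_\mathcal{R}$. A more hands-on alternative uses $\trLR$: since $e$ is conjugate through $\trLR$ to a transpose operation and \eqref{eq: charge tr} reverses charge, one can verify the identity on the highest-weight (lattice-word) element and propagate along the $\Symm_d$-action, in the same spirit as the proof of Theorem~\ref{thm: def1 for evacuation}.

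The main obstacle is the local analysis underlying part (2): showing that a single step of the recursive bijection changes $cc \circ \Theta$ by precisely the charge increment requires a careful case analysis of which singular strings are selected and how the vacancy numbers update, and this is the technical core of \cite[\S5]{KSS}. The crystal-theoretic route for part (1) is cleaner, but it merely relocates the difficulty: identifying $\Theta$ with the Lusztig involution ultimately rests on the same box-addition bookkeeping, so there is no avoiding a careful treatment of the recursive step.
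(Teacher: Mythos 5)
The paper does not prove this theorem at all: it is imported verbatim from Kirillov--Schilling--Shimozono (their Theorems 5.6 and 9.1), and the appendix only explains how Theorem~\ref{thm: KSS} is deduced from it. So your proposal is in effect an attempt to reprove the main results of \cite{KSS} from scratch, and beyond openly deferring the technical core it contains genuine gaps. In part (1), your opening claim that both sides share the same underlying configuration $\nu$ is circular: the configuration underlying $\ov{\phi}_{w_0(\mathcal{R})}(e(T))$ depends on the entire tableau $e(T)$, not merely on its shape and content --- distinct elements of $\CLR(\la;\mathcal{R})$ generally map to distinct configurations, which is exactly why the fermionic formula \eqref{eq: fermionic formula} sums over many $\nu$ for fixed $(\la,\mu)$ --- so ``evacuation preserves $\la$ and $\Theta$ alters only riggings'' establishes nothing; invariance of $\nu$ under evacuation is a consequence of the theorem, not an available input. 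Likewise, propagating along the $\Symm_d$-action cannot pin down the riggings: by Lemma~\ref{lem: Phi is invariant} (and its rectangular generalization) that action is invisible on the rigged-configuration side, since $\RC(\la;\mathcal{R})$ is independent of the ordering of the rectangles, so it cannot distinguish $\Theta$ from any other rigging-complementation on an orbit; and the Lusztig-involution route presupposes that $\ov{\phi}_\mathcal{R}$ is a crystal isomorphism onto a crystal structure on rigged configurations --- a fact established only later (by Schilling) and logically downstream of \cite{KSS}, so it cannot be invoked here without circularity.

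In part (2), the proposed induction presumes that the generalized charge $c_\mathcal{R}$ has a tractable increment when the right-most box containing the largest letter is removed. But $c_\mathcal{R}$ is defined via cyclage/catabolism constructions (Shimozono, Schilling--Warnaar), not by any box-by-box recursion matching the algorithm of Definition--Proposition~\ref{defprop: RC bijection}; determining that increment is essentially equivalent to the theorem itself, and the actual argument of \cite{KSS} is organized quite differently --- through the transpose duality \eqref{eq: charge tr}, splitting of rectangles into rows and columns, and verifying that both statistics satisfy the same characterizing recurrences under those operations. None of that analysis is carried out or replaced in your sketch. Your closing admission that the crystal route ``merely relocates the difficulty'' is accurate and applies to the whole proposal: as written, each part rests either on a circular step or on bookkeeping that is named but never performed, so what you have is a plan of attack rather than a proof --- which is presumably why the paper cites this result rather than proving it.
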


\begin{proof}[Proof of Theorem \ref{thm: KSS}]
Suppose $T \in \SSYT(\la,\mu)$. Recall that $e^* = w_0 \circ e$. By Lemma \ref{lem: Phi is invariant},
\[
\Phi_\mu(e^*(T)) = \Phi_{w_0(\mu)}(e(T)).
\]
By Theorem \ref{thm: KSS actual}(1) and the fact that evacuation commutes with both standardization and transposition of standard tableaux, we have
\[
\Phi_{w_0(\mu)}(e(T)) = \ov{\phi}_{w_0(\mathcal{R}(\mu)^t)} \circ \trLR \circ \std (e(T)) = \Theta (\Phi_{\mu}(T)).
\]
Thus, $\Phi_\mu(e^*(T)) = \Theta (\Phi_\mu(T))$, proving part (1).

For part (2), use Theorem \ref{thm: KSS actual}(2) and \eqref{eq: charge tr}, \eqref{eq: LS charge} to obtain
\[
cc \circ \Theta \circ \Phi_\mu(T) = cc \circ \tw{\phi}_{\mathcal{R}(\mu)^t} \circ \trLR \circ \std(T) = b(\mu) - c(T).
\qedhere
\]
\end{proof}

\bibliography{AffineEvacuation}
\bibliographystyle{alpha}

\end{document}